\theoremstyle{definition}
\newtheorem{thm}{Theorem}[section]
\newtheorem{lem}[thm]{Lemma}
\newtheorem{prop}[thm]{Proposition}
\newtheorem{defn}[thm]{Definition}
\newtheorem{definition}[thm]{Definition}
\newtheorem{rem}[thm]{Remark}
\newcommand\QQ{{\mathbf Q}}
\newcommand{\ZZ}{\mathbf{Z}}      
\newcommand\CC{{\mathbf C}}
\newcommand{\Gm}{\mathbf{G}_m}
\newcommand{\kk}{\mathsf{k}}
\newcommand{\norm}{\mathrm{norm}}
\newcommand{\adele}{\mathbb{A}}
\newcommand{\fltns}{{\mathbin{\mkern-6mu\fatslash}}}
\newcommand{\Z}{\mathbf{Z}}
\newcommand{\Q}{\mathbf{Q}}
\newcommand{\R}{\mathbf{R}}
\newcommand{\SL}{\mathrm{SL}}
\newcommand{\C}{\mathbf{C}}
\newcommand{\F}{\mathbf{F}}
\newcommand{\GL}{\mathrm{GL}}
\newcommand{\PGL}{\mathrm{PGL}}
\newcommand{\wdd}{numerically weakly dual }
\newcommand{\gal}{\operatorname{Gal}}
\newcommand{\spec}{\mathrm{Spec}\, }
\newcommand{\Ggr}{\mathbf{G}_{\mathrm{gr}}}
\newcommand{\GGm}{\mathbf{G}_m}
\newcommand{\Ga}{\mathbf{G}_a}
\begin{document}

\author{Eric Y. Chen and Akshay Venkatesh}
\title{Some singular examples of relative Langlands duality}
\begin{abstract}
Relative Langlands duality structures the study of automorphic periods
around a putative duality between certain $G$-spaces and $\check{G}$-spaces,
where $G, \check{G}$ are dual reductive groups. 
 
In this article, after giving a self-contained exposition of the relevant ingredients from relative Langlands duality, we examine this proposal for some interesting pairs of singular spaces: one pair arising from the cone of nilpotent $3 \times 3$ matrices, and the other pair arising from the nilpotent cone of $(2,2,2)$-tensors. These relate, respectively,  to Rankin--Selberg integrals 
discovered by Ginzburg and Garrett.

\end{abstract}

\maketitle
\tableofcontents

 \section{Introduction}

\subsection{Relative Langlands duality}
Let $G$ be a reductive group over $\CC$, and $\check{G}$ its Langlands dual group.   ``Relative Langlands duality'' (\cite{BZSV}, \S \ref{Hamact}) contends that Langlands duality should extend to a duality between certain smooth $G$ and $\check{G}$-spaces.
 The resulting class of pairs $(G,X)$ and $(\check{G}, \check{X})$   
supply many interacting structures in the
 the local and global Langlands program, among which is 
 a pair of equalities of the shape: 
\begin{equation} \label{introeq} \langle \mbox{$X$-Poincar{\'e} series}, f_G  \rangle  \sim L(\check{X}, f_G),
\mbox{$f_G$ a cusp form on $G$;} \end{equation} 
\begin{equation} \label{introeqd} 
 \langle \mbox{$\check{X}$-Poincar{\'e} series}, f_{\check{G}}  \rangle  \sim L(X, f_{\check{G}}),
 \mbox{$f_{\check{G}}$ a cusp form on $\check{G}$.}
 \end{equation}
 
 We will explain the notation here momentarily,
and in Theorem \ref{thm: smooth} we give, by way of illustration, how several standard examples
 of period integrals fit into this framework. 
 Our main result in this paper, Theorem \ref{mainthm}, exhibits
 two examples of pairs {\em with $X$ and $\check{X}$ singular} for which
 \eqref{introeq} and \eqref{introeqd} hold.  This falls outside the framework of \cite{BZSV}, in which the spaces were always smooth;  as we will see, the singular
 case introduces  several interesting new phenomena.  
 
 Let us sketch now what \eqref{introeq} and \eqref{introeqd} mean. 
 In both cases,  the left hand sides are defined in a standard way, to be recalled in \S \ref{autperiods}.
 However, their  right hand sides  are less standard.
 If $\check{X}$ were a linear representation of $\check{G}$, then $L(\check{X}, f_G)$ is 
simply the a special value of the $L$-function 
attached by Langlands (\cite{LanglandsWeil}) to the form $f_G$ and the representation of $\check{G}$ on $\check{X}$.
 However,
in most interesting examples,    $X$ and $\check{X}$ are {\em not} vector spaces, and the right hand side $L(\check{X}, f_G)$ is what one might call a ``nonlinear $L$-function.''
They will be outlined below in \S \ref{RS0}, 
and will be explained more carefully in \S \ref{spectralperiods}.

Now we indicate the geometry of the two examples
to be studied in Theorem \ref{mainthm}, see \S \ref{Examples} for more. 
 
  \begin{itemize}
 \item In one example, $G$ is isogenous to $\PGL_3 \times \Gm$, and $X$  
 is identified with the nilpotent
  cone for $\PGL_3$. Moreover,  $\check{X}$  is (generically) a triple cover of the same nilpotent cone which resolves its singularities in codimension $2$.   In this example, the second equality \eqref{introeqd} is
  essentially Ginzburg's integral representation \cite{Ginzburg} for the adjoint $L$-function for $\GL_3$;
  the dual equality \eqref{introeq} is new, and it does not represent a ratio of $L$-functions.

  \item In the other example, $G$ is isogenous to
  $\SL_2^3 \times \mathbf{G}_m$ and  $X$ will be the
space of $2 \times 2 \times 2$ tensors of hyperdeterminant zero. Moreover, $\check{X}$
is the affine cone over the Lagrangian Grassmannian of $3$-planes in $6$-space. 
In this example, \eqref{introeqd} is essentially Garrett's integral representation \cite{Garrett} of the triple
product $L$-function for $\GL_2$; the dual equality \eqref{introeq} is again new, not representing a ratio of $L$-functions. 
 \end{itemize} 
 
In both cases, much of the structure we are using is 
already  implicit or explicit in the original papers \cite{Ginzburg, Garrett} of Ginzburg and Garrett
on the automorphic side.  What our presentation brings out, however, is the existence
of a {\em symmetry} between automorphic and spectral sides, incarnated by the
``mysterious'' extra period identity \eqref{introeq}. This ``extra'' identity 
{\em does not represent} a quotient of standard $L$-values; indeed, the function occurring
therein does not even have meromorphic continuation to the full complex plane. 
  
It is worth observing that both examples can be described in a  uniform way: 
 $$ X = \mbox{affine closure of }\Gm U^0 \backslash G,
 \check{X} = \mbox{affine closure of }\Gm \check{U}^0 \backslash \check{G},$$
 where $U^0$ is the kernel of a natural map $U \rightarrow \mathbf{G}_a$
from a maximal unipotent subgroup $U$ to $\Ga$, with $\Gm \subset G$   
 normalizing $U^0$, and similarly on the dual side. We exploit this common structure to give uniform proofs. However,
 the fact that these are dual, in the sense of \eqref{introeq}, \eqref{introeqd}
 is not a general fact -- it uses numerical coincidences specific to the groups $G$ mentioned above. 
 
 \subsection{Hamiltonian actions} \label{Hamact}

 Next, let us briefly discuss how this relates to the notation and terminology of \cite{BZSV}. 
There the authors introduce a class of symplectic varieties with reductive group action (``hyperspherical Hamiltonian actions"), and a notion of duality
interchanging  symplectic $G$-varieties with symplectic $\check{G}$-varieties of this class -- that  is to say, a proposed duality
\begin{equation} \label{Mduality}  (G, M) \leftrightarrow  (\check{G}, \check{M})\end{equation}
built atop the Langlands duality $G \leftrightarrow \check{G}$. 

The general formulations of \cite{BZSV} simplify when both $M$ and $\check{M}$
can be polarized, i.e. written as $M=T^*X$ and $\check{M} = T^* \check{X}$
for $G$-spaces $X$ and $\check{G}$-spaces $\check{X}$, in which case the various predictions of \textit{loc. cit} can be rewritten in terms of $X, \check{X}$ and typically take a simpler form. 
Here, we will  always work at the level of $X, \check{X}$, avoiding  the question of giving a formal meaning to the cotangent bundles of singular spaces.\footnote{As a small warning,  we note that the choice of polarization is not unique, and therefore
\eqref{Mduality} will not correspond to an exact duality at the level of $X, \check{X}$. }

There are many examples in the automorphic literature 
which suggest that the general phenomena of \cite{BZSV} extend beyond the hyperspherical case,
although perhaps not in such a clean form. To understand the correct formulations,
a detailed study of examples is necessary, and this paper can be considered as a first step in this direction.

There are, indeed, significant and interesting differences 
between the singular and smooth case -- both at the level of statement and at the level of proof. For an example of the former, consider the
discrepancy in  powers of the discriminant $\Delta$ occurring in the global period formulas  in the singular case (the factors $a_{12}$ and $a_{21}$ in Definition \ref{weakly dual def}).  
For an example of the latter, the reader can compare 
the proof of Theorem \ref{mainthm} with the corresponding proofs in Appendix \ref{smoothproofs} in the smooth case; the  structure of integral points on $X$,
which  in turn controls the behavior of the Poincar{\'e} series, is substantially more complicated
in the singular case, see Proposition \ref{intpointsX}.

%

%
%
%
%

\subsection{Nonlinear $L$-functions: motivation}
 \label{RS0}  
 To motivate the nonlinear $L$-functions that appear in \eqref{introeq} and \eqref{introeqd}, and in our main theorem, we consider the classical example of Rankin--Selberg integrals.
  
 Starting with 
  two level $1$ modular forms $\sum a_n q^n$ and $\sum b_n q^n$,
  Rankin and Selberg provided an integral representation of the Dirichlet series
$ L^*(s, f \otimes g) := \sum a_n b_n n^{-s} $. 
This $L^*(s)$ does not satisfy an elegant functional equation;
for that purpose, it is usually replaced by the
product  $L(s, f\otimes g) := L^*(s, f\otimes g) \cdot \zeta(2s)$. 
But for us the ``naive'' $\sum a_n b_n n^{-s}$
is in fact a prototype of a nonlinear $L$-function. 

Indeed, 
let $A_p, B_p \in \mathrm{SL}_2(\C)$ be the Satake matrices
for the two forms, so that $\mathrm{tr}(A_p)=a_p$ and $\mathrm{tr}(B_p) =b_p$, and note that:
$$ 
\mbox{local factor of $L(s, f \otimes g)$} = \mbox{trace of $(A_p \otimes B_p) \cdot p^{-s}$ acting on 
functions on $\mathbf{A}^2 \otimes \mathbf{A}^2$ }.$$
By contrast, writing $\mathcal{N}$ for the rank one tensors in $\mathbf{A}^2 \otimes \mathbf{A}^2$, 
 $$ 
\mbox{local factor of $  L^*(s, f \otimes g)$} = \mbox{trace of $(A_p \otimes B_p) \cdot p^{-s}$ acting on 
functions on $\mathcal{N}$ }.$$
 
This motivates
calling $L^*(s, f\otimes g)$ ``the $L$-function attached to $\mathcal{N}$ as a $\GL_2 \times \GL_2$-space''
just as $L(s, f \otimes g)$ is the $L$-function attached to $\mathbf{A}^2 \otimes \mathbf{A}^2$
as a $\GL_2 \times \GL_2$-representation.

More generally, if $f$ is an automorphic form on $G$, and 
$\check{X} $ is a {\em conical} $\check{G}$-variety,
we may then define
$L(s, f, \check{X})$ as the Euler product
whose $p$th Euler factor  is described thus:
$$ \mbox{local factor of $L(s, f, \check{X})$} =  \mbox{trace of $A_p  \cdot  p^{-s}$ acting on functions on $\check{X}$},$$
where $A_p \in \check{G}$ is a Satake parameter for $f$ at the prime $p$. 
The resulting Euler product will be convergent for $\mathrm{Re}(s)$ sufficiently large;
it will {\em not}, in general, extend to a meromorphic function.

This definition -- that is to say, for conical $\check{X}$ -- is sufficient to cover our main
result Theorem \ref{mainthm}; for the case of non-conical $\check{X}$,
we refer again to \S \ref{spectralperiods}. 
   The idea of
attaching $L$-functions to nonlinear spaces is a key suggestion of \cite{BZSV};
here, we bring out the point that the class of nilcones perhaps has particular interest.
We believe this viewpoint is a step towards a conceptual explanation of numerology first observed by Rallis and Ginzburg \cite{Ginzburg-Rallis}.

\subsection{Connection to work on singular spherical varieties}
 
There is a significant body of work that seeks to
study periods and harmonic analysis on {\em singular} spherical varieties. 
Perhaps the most closely related to this paper are proposals in the work of Sakellaridis \cite[Conjecture 3.2.2]{Sakellaridis-RSmethod}
(for more in this direction, see also the later work \cite{Bouthier-Ngo-Sakellaridis} of Bouthier, Ngo, and Sakellaridis,  as well as previous work  of Braverman--Kazhdan \cite{BK}). 

Speaking a little informally, these papers suggest that one can often equip the singular space $X$ with additional data -- an ``exotic'' basic function, constructed
using intersection cohomology -- and, {\em considered
in conjunction with this extra data}, the dual of $X$ is smooth. What we show here is that, at least in the two cases we study, 
one can proceed more naively, without using an exotic basic function. Although the dual of $X$ in this naive sense is singular,
  the resulting theory remains meaningful in the framework of relative Langlands duality
even if it is further from the traditional theory of $L$-functions.\footnote{Cf. Sakellaridis {\em op. cit.}, Example 3.2.1: ``
but the numerator does not represent an $L$-function and it would be unreasonable to expect that its Euler product admits meromorphic continuation.
Therefore, this was not the correct Schwartz space...''  What we show is, despite the absence of meromorphic continuation, there still 
seems to be an interesting theory.}
 See Remark \ref{Schwarzchoice} for more discussion on this topic.

%
%

%
%
%

\subsection{Notation} \label{notation} 
\subsubsection{Groups}
 $G$ and $\check{G}$ will denote a pair of Langlands dual reductive groups. 
 We will work with these over any ring, and by this we always
 mean the split (Chevalley) form, which we always regard as equipped with its standard pinning. 
 
 We denote by $T, \check{T}$ the split tori of $G, \check{G}$
 and by
 $$e^{2  \rho} \in X^*(T) = X_*(\check{T})$$
 the sum of positive roots; similarly we define $e^{2 \check{\rho}}$ to be the sum of positive coroots. 
 We denote by $U, \check{U}$ the  standard maximal unipotent subgroups of $G, \check{G}$.

\subsubsection{Coefficient fields} \label{Fandk}
$\F$ will be used for an ``automorphic'' coefficient field; it will always
be a finite field of size a prime power $q$. 
$\kk$ will be used for a ``Galois-side'' coefficient field; it 
can be taken to be the algebraic closure of the field of $\ell$-adic numbers.  It will be convenient to fix, once and for all, an isomorphism
$\kk \simeq \C$, so we will freely move between $\kk$-valued and $\CC$-valued notions without explicit comment.

 \subsubsection{Group actions and the grading group $\Ggr$} \label{Xdef}
 $G$ and $\check{G}$ will act on various varieties and schemes. 
Our convention for actions is as follows:
 \begin{quote}
Group actions on spaces on the right, and group actions on functions, forms etc.  are on the left,
derived from geometric actions by pullback.
\end{quote}
This convention differs from \cite{BZSV}, and we compare them in \S \ref{BZSVcomp}.

  Let $X, \check{X}$ be affine $\F$-varieties, 
 admitting actions of 
  $G \times \Ggr$
and $\check{G} \times \Ggr$ respectively.   Here, $\Ggr$ is simply another name for $\Gm$, but it plays a role quite distinct to any central torus in $G$ and to avoid confusion
it is best to label it differently -- the notation arises from its role as a ``grading group'' in the context of \cite{BZSV}.

For us, the most important
function of the $\Ggr$ action is that it will govern, in general, where the $L$-function is to be evaluated,
when $X$ or $\check{X}$ is on the spectral side. 
An  {\em eigenmeasure} is a
differential form of top degree on the smooth locus $X^{\circ}$
which is an eigenvector under the translation action of both $G$ and $\Ggr$: 
\begin{equation} \label{etadef}  (g, \lambda)^* \omega = \eta(g, \lambda) \omega = \eta(g)\lambda^\varepsilon \omega,\end{equation} 
for a character $\eta: G \times \Ggr \rightarrow \Gm$ and an integer $\varepsilon \in \ZZ$. 

Our primary interest is in the case when $X, \check{X}$ are {\em conical}, in the following sense.
\begin{defn}\label{defn: conical}
Let $k$ be a field, and let $X$ be an affine $k$-variety with $\Gm$-action. We say that $X$ is \textit{conical} if the coordinate ring $k[X]$ has only nonnegative $\Gm$-weights, and the 0th graded piece is isomorphic to $k$.
(In particular, such an $X$ has a unique $\Gm$-fixed point, usually to be denoted by $0$). 
\end{defn}
For our purposes, $k$ will either be $\F$ (the automorphic coefficient field) or $\kk$ (the Galois coefficient field). A $G \times \Ggr$-variety $X$ is \textit{conical} if it is conical in the above sense with $\Gm = \Ggr$.

 \subsubsection{Curves}
 
Let $\Sigma$ be a curve of genus $g$ over $\F$ and with function field $F$. 
We introduce the letter $\Delta$ for the discriminant of $\Sigma$, that is to say:
\begin{equation} \label{Deltadef}  \Delta :=  q^{2g-2}.\end{equation} 
We will write $\zeta(s)$ for the $\zeta$-function of $\Sigma$, i.e.
$\zeta(s) = \prod_{v} (1-q_v^{-s})^{-1}$ where the product ranges over places $v$
with residue field of size $q_v$.

We denote by $\mathbb{A}$ the adele ring of $F$
and by $\mathcal{O} \subset \mathbb{A}$ the maximal compact subring. 
Write \begin{equation} \label{bracketnotation} [G] := G_F \backslash G_{\mathbb{A}} / G_{\mathcal{O}}\end{equation} 
the adelic quotient, equivalently, the
set of isomorphism classes of  ($\F$-rational) $G$-bundles over $\Sigma$.
Similar notation is used for subgroups of $G$, e.g. $[U]$. 

 An ``unramified automorphic form'' on $G$
will be, by definition, a 
function $$f: [G] \rightarrow \kk$$
which is an eigenfunction of all Hecke operators. 

\subsubsection{Local notation}
We use $v$ for a place of $F$ and $F_v, \mathcal{O}_v, \varpi_v$ for the completion of $F$, the local ring of integers, and the uniformizer, at $v$. We write $q_v$ for the size of the local residue field at $v$. The normalized local valuation will be denoted by $x \mapsto |x|_v$ and
its product over all places gives the adelic valuation $\mathbb{A}^{\times} \rightarrow \R_{>0}$, often
denoted simply $x \mapsto |x|$. If $\chi$ is a cocharacter of some torus $T$, we write $\varpi^\chi$ for the element $\chi(\varpi) \in T(F_v)$.

\subsubsection{Galois parameters}

We will write
$$\Gamma \mbox{ or } \Gamma_F = \mbox{Weil group of $\Sigma$}$$
for the  unramified global Weil group of $\Sigma$, i.e. the preimage of integer powers of Frobenius inside the {\'e}tale fundamental
group (equivalently: everywhere unramified Galois group) of $\Sigma$.
This choice reflects the fact that we will work everywhere with unramified parameters
and unramified automorphic forms.

\subsubsection{Additive characters}

Let $\psi: \mathbb{A}/F \rightarrow \CC^{\times}$
be an additive character  
 whose conductor of $\psi$ at each place $v$ is even.
 That is to say, there exists, for each such $v$,
an {\em even} integer $2m_v$, with the property that 
$\psi$ is trivial on $\varpi_v^{-2m_v} \mathcal{O}_v$
but not on $\varpi_v^{-2m_v-1} \mathcal{O}_v$. 
Such a character exists by a theorem of Hecke (see \cite{Hecke} Satz 176).\footnote{ Let $K^{1/2} = K^{1/2}_\Sigma$ be a choice of $\F$-rational spin structure;
 it exists, by a theorem of Hecke, and we fix a rational section $\nu$ of $K^{1/2}$ which determines also a rational 1-form $\omega = \nu^{\otimes 2}$.
 Let $2m_v$ be the vanishing order of $\omega$ at a place $v$, 
 We take $\psi$ to be the character given by 
 determined by $\omega$, i.e. locally sending $f$ to $\mathrm{Res}(f \omega)$.} 

We write
 \begin{equation} \label{partialdef} \partial^{1/2} = (\varpi_v^{m_v}) \in \mathbb{A}^\times, \mbox{ so that } |\partial| =q^{-(2g-2)}.\end{equation}

%

\subsubsection{Measures} \label{groupmeasures}
For $G$ a reductive group over $\F$, we shall 
  normalize the Haar measure on $G(\mathbb{A})$ in such a way that 
it assigns mass $1$ to the standard maximal compact subgroup
$\prod_{v} G(\mathcal{O}_v)$. 

In the case of the additive group $\mathbf{G}_a$ there are two measures of interest:
firstly, the ``integral'' measure $\mu$ just described which assigns mass 1 
to the everywhere integral adeles; and the ``self-dual'' measure $\mu^{\psi}$ with respect to which $F \backslash \mathbb{A}$ has a self-dual Fourier transform with respect to $\psi$.
Locally, a self-dual lattice is given by $\varpi_v^{-m_v} \mathcal{O}_v$; its self-dual measure is $1$ whereas its ``integral'' measure
is $q_v^{m_v}$. 
  Thus, as an equality of measures on $\mathbb{A}$,
\begin{equation} \label{meas form}
\mu = \Delta^{1/2} \mu^{\psi}. 
\end{equation}
a familiar shadow of the fact that the ``volume'' of the ring
of integers in a number field is the square root of the discriminant. 

\subsubsection{Normalization of class field theory} 
Our normalization of the local class field theory
associates the modulus character $x \mapsto |x|$
of a local field $F$ with the cyclotomic character of its Galois group, that is to say, sending a geometric Frobenius element
to $q^{-1}$ where $q$ is the size of the residue field; or, said differently, the reciprocity map
of class field theory
sends a uniformizing element to geometric Frobenius. 
In what follows,  Frobenius means geometric Frobenius.

\subsection{Acknowledgements}
We would like to thank Paul Nelson for interesting conversations related to other examples, including the doubling integral.
A.V. would like to thank David Ben-Zvi and Yiannis Sakellaridis for many years of inspiring collaboration; some of their ideas
are also reflected in this paper. E.C. would like to thank David Ben-Zvi, Yiannis Sakellaridis, and the second named author for sharing their ideas and their encouragement regarding this project. 

The main ideas and results of this paper were presented (albeit in a much cruder form) in the 2023 Princeton PhD thesis of the first named author. Both authors acknowledge the support of the National Science Foundation. 
 
\section{Automorphic periods}  \label{autperiods}

Our goal here is to provide an exposition of the relevant parts of \cite{BZSV}; however,
 we will adapt and simplify the language to suit our current needs.

\subsection{The automorphic period attached to $X$}
We will specialize (and slightly adapt, by allowing singularities)
the discussion of \cite{BZSV} to the case at hand. 
We follow notation as in \S \ref{notation}; in particular, let $X$ be as in \S \ref{Xdef};
we shall assume it to admit an eigenmeasure. 

 We  consider the following unitarily-normalized action of 
 the adelic points of $G$ on the space of adelic Schwartz functions $\mathcal{S}(X(\mathbb{A}_F))$:
     $$g  \star \Phi(x) = |\eta(g)|^{1/2} \, \Phi(x g),$$
     where $\eta$ is as in \eqref{etadef}. 
    

To form the normalized theta series on $X$, we want to use not  the standard characteristic function of integral points on $X(\mathbb{A}_F)$
(which we call $\Phi^0$) but rather its translate through $\partial^{1/2}$:
\begin{equation} \label{Phidef} \Phi(x) :=  \mbox{right translate of $\Phi^0$ through $\partial^{1/2} \in \Ggr(\mathbb{A})$}
\end{equation} 

Let $\mathring{X} \subset X$ be an open subvariety;  
if $X$ is conical we shall take $\mathring{X} = X-\{0\}$,
and if otherwise unspecified  we always understand $\mathring{X} = X$. 

 The
normalized theta series of $X$ on $G(\mathbb{A})$ is defined by
    multiplying the Poincar{\'e} series $\sum_{x \in \mathring{X}(F)} \, (g,\partial^{1/2}) \star \Phi(x)$
    by a unitary normalization factor:
    \begin{align} \label{thetaXdef}
        \theta_X (g) &=  
        \Delta^{\frac{\dim X- \dim G}{4}} |\partial^{1/2}|^{1/2}  \sum_{x \in \mathring{X}(F)} \,  g \star \Phi(x),  
        \end{align} 
where $\Phi$ is as above, and $|\partial^{1/2}|$ is the factor by which
$\partial^{1/2} \in \Ggr(\mathbb{A})$ scales an eigenmeasure (in effect,
this latter factor amounts to replacing the action of $\Ggr$ in \eqref{Phidef} by a unitarily normalized action). 

\begin{rem} The fact that we sum over $\mathring{X}(F)$ instead of $X(F)$ in the definition is just a ``hack'' to avoid convergence problems coming from the volume of $[G]$, and more precisely the volume of any central $[\Gm]$.  A proper treatment of duality should include all of $X(F)$ in the definition of $\theta_X$
  and then regularize  its pairing with automorphic forms; in fact, the contribution of $0 \in X(F)$ should only pair nontrivially with the constant automorphic form.    
  Here we use it as a formal device to avoid some (trivial) divergences, and
  its role is so minimal that we do not introduce it in the notation explicitly. 
 \end{rem}

Using this theta series we can define the \textit{(normalized) automorphic $X$-period} for an unramified automorphic form $f$ on $G$ by 
   \begin{equation} \label{PXdef} P_X(f) := \int_{[G]} \, \theta_X(g)f(g) \, dg .
   \end{equation}  
   
\begin{rem}[Choice of Schwartz function] \label{Schwarzchoice} In defining $\theta_X$ and $P_X$, we have downplayed the role of the Schwartz function $\Phi$; in reality this is an extra degree of freedom that ought to be better understood. We have chosen $\Phi$ as in \eqref{Phidef} since we interpret $\theta_X$ as the function-theoretic avatar of ``counting $K^{1/2}$-twisted $X$-sections of a $G$-bundle", but it is not the only choice with geometric meaning. 

Notably, when $X$ is singular (which will be the most interesting case for us), another function of rapid decay termed the \textit{IC function} (see \cite{Bouthier-Ngo-Sakellaridis} and \cite{Sakellaridis-Wang}) is the function-theoretic analogue of the intersection cohomology of formal arc spaces (the latter of which is difficult to define). The IC-function gives a satisfying explanation to the extra zeta factors one needs in order to write down \textit{completed} Eisenstein series in the case when $X$ is the \textit{basic affine space} of a group, as corroborated by geometric calculations of \cite{Braverman-Gaitsgory};
and \cite{Sakellaridis-RSmethod} proposes that the IC function plays a similar role in the theory of integral representations of $L$-functions. 
On the other hand, in this article and in a companion article of the first author \cite{toric} we suggest that our choice of $\Phi$ is convenient for the study of relative Langlands duality, as it restores a pleasing symmetry between the $G$ and $\check{G}$-actions. 
\end{rem}

 The following lemma will handle  convergence  in all cases
 that we encounter here. 
 \begin{lem}
 Suppose $X$ is conical $G$-variety with respect to a central $\Gm \subset G$ (see Definition \ref{defn: conical})
 such that $G/\Gm$ is semisimple, and take $\mathring{X} = X-\{0\}$. 
 Then  $\int_{[G]} \, \theta_X(g)f(g) \, dg$ 
is absolutely and uniformly convergent if $f$ is a cusp form, and the real part of   the central character of $f$,
restricted to $\Gm$ is sufficiently large.  
\end{lem}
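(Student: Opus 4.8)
The plan is to estimate $\theta_X$ pointwise on a Siegel domain and to weigh it against the rapid decay of the cusp form $f$, with the decay in the central direction supplied by the hypothesis on the central character. First I would record that $\theta_X$ descends to a function on $[G]$: the summand $g\star\Phi(x)=|\eta(g)|^{1/2}\Phi(xg)$ is right $G_{\mathcal{O}}$-invariant in $g$ — indeed $\Phi$ is the $\Ggr(\mathbb{A})$-translate by $\partial^{1/2}$ of the characteristic function $\Phi^0$ of $X(\mathcal{O})$, and $\Ggr$ commutes with $G$, so $\Phi$ is still $G_{\mathcal{O}}$-invariant — and is left $G_F$-invariant since $|\eta(\gamma)|=1$ for $\gamma\in G_F$, while $x\mapsto x\gamma$ merely permutes $\mathring X(F)=X(F)\setminus\{0\}$. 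For each fixed $g$ the sum defining $\theta_X(g)$ is finite, being a count of $F$-points of the affine variety $X$ inside a fixed compact adelic box.

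Next I would fix a $G\times\Ggr$-equivariant closed embedding $X\hookrightarrow\mathbf{A}^n$ whose coordinates $x_1,\dots,x_n$ are $T\times\Ggr$-weight vectors, of weights $\mu_1,\dots,\mu_n$; since $X$ is conical with respect to the central $\Gm\subset G$, each $\mu_i$ pairs nonnegatively with the cocharacter defining that $\Gm$, and a coordinate with $\mu_i=0$ is constant on $X$ and may be discarded on $\mathring X$. By reduction theory for $G$ over $F$ in the everywhere-unramified setting, $G_{\mathbb{A}}$ is a finite union of sets $G_F\cdot\mathfrak{S}_j$ with $\mathfrak{S}_j=\Omega_j\,A^{+}\,G_{\mathcal{O}}$, where $\Omega_j\subset U_{\mathbb{A}}$ is a fixed finite set and $A^{+}$ ranges over $\varpi^{\lambda}$, $\lambda\in X_*(T)$, lying in a shifted positive chamber, so that $|\alpha(\varpi^{\lambda})|$ is bounded above for each simple root $\alpha$. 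It is then enough to bound, uniformly in $j$ and in $f$, the sum $\sum_{\lambda}|\theta_X(\Omega_j\varpi^{\lambda})|\,|f(\Omega_j\varpi^{\lambda})|$ against the locally finite measure pushed forward to $X_*(T)$ (the modular factor being absorbed into the polynomial term below).

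The two inputs I would combine are as follows. First, cuspidality of $f$ gives for every $N$ a bound $|f(\Omega_j\varpi^{\lambda})|\le C_N\,|\chi_f(\varpi^{\lambda_0})|\,\prod_{\alpha}\min\!\big(1,|\alpha(\varpi^{\lambda})|\big)^{N}$, where $\lambda_0$ is the component of $\lambda$ along the central $\Gm$ and $\chi_f$ the central character of $f$ — the function-field avatar of the classical rapid-decay estimate, coming from the vanishing of constant terms and the approximation of $f$ by its constant term outside a Siegel domain. Second, the point count: since $\Phi$ is supported on the $\partial^{1/2}$-translate of $X(\mathcal{O})$, the quantity $\sum_{x\in\mathring X(F)}|\Phi(x\,\Omega_j\varpi^{\lambda})|$ counts $x\in X(F)\setminus\{0\}$ whose $i$-th coordinate lies in a space $H^0(\Sigma,\mathcal{L}_i)$ with $\deg\mathcal{L}_i=-\langle\mu_i,\lambda\rangle+O(1)$, which by Riemann--Roch is $\ll q^{\sum_i\max(0,\,-\langle\mu_i,\lambda\rangle)+O(1)}$; together with the normalizing factors $\Delta^{(\dim X-\dim G)/4}|\partial^{1/2}|^{1/2}|\eta(\cdot)|^{1/2}$ this is at most a constant times a polynomial in the $|\alpha(\varpi^{\lambda})|$ times $q^{-cn}$ in the central direction ($n\to+\infty$) in which the box grows, and it vanishes identically once $\lambda$ has moved far enough in the opposite central direction, since a nonzero point of a conical $X$ always has a coordinate of positive central weight. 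Assembling the two: taking $N$ large kills the polynomial factor along every semisimple direction — here one uses that $G/\Gm$ is semisimple, so the positive chamber is a genuine cone over finitely many simple directions and the resulting multi-geometric series over $X_*(T)$ converge — and the hypothesis that $\mathrm{Re}$ of the central character of $f$ is large enough kills the exponentially growing central contribution; all constants depend only on $X$, $G$, $\Sigma$ and the bound on the central character, giving the asserted uniformity.

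The hard part will be the point-count step: obtaining an honest upper bound on the number of $F$-points of the possibly singular and possibly reducible variety $X$ inside an adelic box whose shape varies with the Siegel coordinates, and tracking precisely how that shape depends on the weights occurring in $\F[X]$. It is exactly the conical hypothesis that makes this manageable: positivity of the weights forces the relevant boxes to shrink as $\lambda$ moves into the good central chamber, and to shrink to the excised point $\{0\}$ after finitely many steps, so that — unlike for a general $G$-variety — the rational points near the cone point stay under control and the whole expression collapses to a convergent geometric series.
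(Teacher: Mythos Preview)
Your approach is correct in outline but takes a considerably more elaborate route than the paper. The paper's key simplification, which you do not use, is that over a function field a cusp form on $G$ with $G/\Gm$ semisimple has \emph{compact support modulo center}: there is a compact $\Omega\subset G_{\mathbb{A}}$ with $f$ supported in $G_F\cdot\Omega\cdot Z(\mathbb{A})$. This collapses the entire Siegel-domain analysis you set up to a finite sum of one-variable integrals over $Z(\mathbb{A})\simeq\mathbb{A}^\times$, and further (using cocompactness of $F_w^\times$ in $\mathbb{A}^\times/F^\times$) to an integral over a single local $F_w^\times$. The lattice-point count is then handled by an elementary pigeonhole bound on $F^N\cap a\mathcal{O}^N$ rather than Riemann--Roch. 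Your rapid-decay/Siegel-set argument is the right tool over number fields or when $G/\Gm$ is merely reductive, but here it is working much harder than necessary.

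One imprecision worth flagging: your sentence asserting that $\theta_X$, after including the factor $|\eta|^{1/2}$, decays like $q^{-cn}$ in the central direction where the box grows is not correct in general (already for $G=\Gm$, $X=\mathbf{A}^1$ the normalized sum grows like $q^{n/2}$ in that direction). You recover from this a few lines later when you say the central character must ``kill the exponentially growing central contribution,'' which is the correct picture; but the intermediate claim should be revised. Relatedly, your concern about ``possibly singular and possibly reducible $X$'' in the point count is a non-issue once you have embedded $X$ equivariantly into $\mathbf{A}^n$: one simply counts in the ambient affine space, exactly as the paper does.
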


The restriction of the central character of $f$ to $\Gm$ defines a character $\mathbb{A}^{\times}/F^{\times} \rightarrow \C^{\times}$,
whose absolute value has the form $|x|^{s}$ for a unique $s \in \R$ --this is what
we have called the real part of the central character.

\proof
Let $Z \subset G$ be the central $\Gm$ with respect to which $X$ is conical
and let $0 \in X$ be the  unique fixed point of that $\Gm$ action. 

Being cuspidal, $f$ has compact support modulo center, i.e.
there exists a compact subset $\Omega \subset G_{\mathbb{A}}$
such that $f$ is supported inside $G_F \cdot \Omega \cdot Z(\mathbb{A})$. 
Since there are only a finite number of $G_{\mathcal{O}}$-orbits
on $\Omega \cdot G_{\mathcal{O}}$ the integral defining
$P_X(f)$ 
amounts to a finite sum of integrals over the adelic points of $Z$, each of which
is bounded 
by an integral of the form
$$ \int_{\lambda \in \mathbb{A}^{\times}/F^{\times}}  \left| \theta_X(\lambda g_0) \right|  |\lambda|^s d \lambda = \int_{\lambda \in \mathbb{A}^{\times}/F^{\times}}  |\lambda|^{s+c}
\sum_{x \in X(F) - \{0\}} \Phi(\lambda x)d\lambda$$
where the constant $c$ arises from the eigenmeasure character,
and we are permitted to restricted to $X_F-\{0\}$ because $0$ does not belong to $\mathring{X}$. 

Fix a place $w$. Since the image of $F_w^{\times}$ inside $\mathbb{A}^{\times}/F^{\times}$
is co-compact, it is sufficient to  analyze the same integral  $\mathbb{A}^{\times}/F^{\times}$ replaced by $F_w^{\times}$. 
Since the support of $\Phi$ is compact, it is enough to  
show that for each compact subset $\Omega \subset X(\mathbb{A})$
a bound of the form:
\begin{equation} \label{processed}  \left| \{v \in X_F-\{0\} : v \lambda \in \Omega  \}  \right| \leq  \begin{cases} |\lambda|^{-D},  & |\lambda| \leq c, \\ 0, &  |\lambda| \geq c \end{cases} \end{equation} 
for all $\lambda \in F_w^{\times}$ and for some constants $c, D$; because
the integral above is then bounded by $\int_{F_w^{\times}}  |\lambda|^{s-D}$ over the region $|\lambda| \leq c$. 
 %

To this end, we 
fix  a $G$-equivariant closed embedding 
$X \hookrightarrow \mathbf{A}^N$ into a linear $G$-representation
$\mathbf{A}^N$ where $\Gm$ acts on the various coordinate by powers $\lambda^i$ for $1 \leq i \leq d$.\footnote{
To produce this, we decompose the coordinate ring $\F[X]$ according to $Z$-weights; then $\F[X]$ is generated by homogeneous generators of (positive) bounded degree $\leq d$, and we write $V_j = \F[X]_{(j)}$ for $j \leq d$. Note that each $V_j$ is a finite dimensional $G$-representation on which $Z$ acts by $j$th power. The surjective ring homomorphism 
$\otimes_{j=1}^d \, \mathrm{Sym}(V_j^*) \twoheadrightarrow \F[X]$ gives the desired embedding.}
Then $\Omega$ is contained in a compact subset of $\mathbb{A}^N$, which
can be taken to have the form $a \mathcal{O}^N$
for some $a = (a_v) \in \mathbb{A}^{\times}$.
Now $a_v \mathcal{O}_v$ is covered by $\max(1, |a_v|)$ translates of $\mathcal{O}_v$, so
$\Omega'$   is covered by $\|a\|^N$ translates of $\mathcal{O}^N$,
and therefore, by the fact that $\# (F \cap \mathcal{O}) = q$ and a pigeonholing argument, 
$$ \# (F^N \cap \Omega')  \begin{cases} \leq  q^N \prod_{v} \max(1, |a_v|), & \textrm{any $a$} \\ 
 =0,  & \prod_{v} |a_v| < 1. \end{cases} $$
 
 By the same reasoning for any $\lambda \in F_w^{\times}$,
the size of the set on the left-hand side of \eqref{processed} is bounded by a constant multiple of
$ \max(1, |a_w| |\lambda_w|^{-1})^N$
always, and is zero for $|\lambda|$ sufficiently small.  
\qed

%
%
 \subsection{Whittaker normalization} \label{Whittaker normalization}
  The paper \cite{BZSV} eschews Whittaker normalization, but
  since we will be studying integrals that unfold to the Whittaker model
  it is very convenient to use it.  Let $f$ be an unramified automorphic form on $G$. 
  Let $W_f$ denote the Whittaker period of $f$ with probability normalization:
  \begin{equation} \label{Whitdef} W_f(g) = \int_{u \in [U]} \psi(u) f(ug) d^{\psi}u.\end{equation}
where, following the notation of \S \ref{groupmeasures},
 the measure $d^{\psi} u$ on $U(\adele)$ is normalized to give mass one to the adelic quotient.

Of particular importance to us is the value $W$ at the point 
\begin{equation} \label{a0ex} a_0 := e^{2 \check{\rho}}(\partial^{-1/2}) \in G(\mathbb{A})
 \mbox{ --- e.g. for $\GL_2$ } 
  a_0 =   \left( 
  		\begin{array}{cc} 
			\partial^{-1/2} & 0 \\ 
			0 &   \partial^{1/2} 
			\end{array} \right).\end{equation}
 which is 
 ``the most antidominant point in the torus at which $W_f$ is nonvanishing.''
%
 We denote this value of $W$ by the symbol $W_f^0$: 
\begin{equation}
\label{a0def}  W_f^0 := W_f(a_0).\end{equation} 
 We say that $f$ is {\em Whittaker normalized} if  
 \begin{equation}
 \label{Whitnormnew}
 W_f^0 = \Delta^{\langle \rho, \check{\rho} \rangle - \dim(U)/4}.
 \end{equation}
 where $\Delta=q^{2g-2}$ as in \eqref{Deltadef}; 
 or equivalently 
\begin{equation} \label{Whitnorm} W_f^0 = q^{-\beta_{\mathrm{Whitt}}/2} \, \text{ where } \beta_{\mathrm{Whitt}} = (g-1)(\mathrm{dim}(U)-\langle 2\rho, 2\check{\rho})\rangle\end{equation} 
For example, for $\mathrm{SL}_2$ we have $\beta_{\mathrm{Whitt}} = -(g-1)$
 and for $\mathrm{SL}_3$ we have $\beta_{\mathrm{Whitt}}(g-1) \times (3-8) = -5(g-1).$ 
 Motivation for this is given in \cite{BZSV}. 
 From the point of view of \cite{BZSV}, Whittaker normalization is a special
 case of the period conjectures rather than something to be distinguished. However, for our current purposes, it is most economical to 
 distinguish the role of the Whittaker model. 
%
%
%
%
%

\subsubsection{The Casselman--Shalika formula}

For later use we observe then that  
for $a=(a_v)$ in the adelic torus we have
\begin{equation}  \label{shifted CS} W_f(a) = W_f^0 \prod_{v} W_v^{\mathrm{un}}(a_v a_{0,v}^{-1})\end{equation}
where $W_v^{\mathrm{un}}$ is the standard normalization of the unramified Whittaker function taken with respect
to an unramified additive character, by which we mean
\begin{equation} \label{Wvun} W_v^{\mathrm{un}}:   e^\chi(\varpi_v) \mapsto  q_v^{-\langle   \chi , \rho \rangle}   s_{\chi}(A_v) \mbox{
for $\chi \in X_*(A)$}\end{equation}
with
\begin{itemize}
\item  $A_v \in \check{G}(\kk)$ the Satake parameter of $f$ at $v$;
\item $s_{\chi}(A_v)$ is the trace of $A_v$ in the representation of $\check{G}$ with highest weight $\chi$, assuming $\chi$ to be dominant;
otherwise we understand it to be zero.
\end{itemize}

For later use we note the consequence of \eqref{Wvun}:
\begin{equation} \label{topolish} W_v^{\mathrm{un}}(e^{\chi}(\varpi_v) a_{0,v}^{-1})
 = q^{-\langle \chi+2 m_v \check{\rho}  , \rho  \rangle } s_{\chi + 2m_v\check{\rho}}(A_v). \end{equation}
 with $m_v$ as in \eqref{partialdef}.

 \section{Spectral periods} \label{spectralperiods} 
 
 \subsection{Motivation}
  
 A basic principle enunciated in \cite{BZSV} is that one should index symmetrically
 the automorphic and Galois side of periods {\em by the same data.} Thus, in place
 of an $L$-function, which is attached to a linear representation of $\check{G}$,
 one should consider, more generally, $\check{G}$-spaces $\check{X}$
 (or even more broadly, Hamiltonian actions, but we will not need this). 
 These nonlinear $L$-functions, which
at first may seem a rather exotic concept,  arise naturally in the Rankin--Selberg method, as we already mentioned in \S \ref{RS0}. We now 
set up these notions more carefully. 

In the current paper, we have used a fairly direct definition of the spectral period as a certain Euler product.  Although not apparent, this definition
is compatible, up to normalization issues, with that used in \cite{BZSV} -- the relationship is discussed in 
\S \ref{QuillenSection}.

 \subsection{The spectral period attached to $\check{X}$ (conical case)} \label{conical}

 We follow the notation set up in \S \ref{autperiods}. 
 In the current section \S \ref{conical} we assume throughout that
 \begin{center}
 $\check{X}$ is a conical $\check{G}$-space (as defined in \S \ref{Xdef})
 \end{center}
 and will discuss the general case in \S \ref{nonconical}. 
 Let  $\varphi: \Gamma_F \to \check{G}(\kk)$
be a Galois parameter that fixes only the origin $0 \in \check{X}$. 

%
%
 \subsubsection{Local nonlinear $L$-functions (conical case)}  \label{nonlinearLdef} 
 We define the {\em local $L$-function attached to $\check{X}$} at a place $v$ to be:
\begin{equation} \label{eq: nonlinearLdef} L(\check{X}, \varphi_v,s) :=
\mbox{ trace of $\mathrm{Frob}_v \times q_v^{-s}$ on $\kk[\check{X}]$}.\end{equation} 
  Here,
  we regard $\mathrm{Frob}_v \times q_v^{-s}$
  as belonging to the $\kk$-points of
  $G \times \Ggr$; the action of these $\kk$-points is by pullback via the right action, $\lambda \cdot f (x) = f(x\lambda)$.
On the right hand side, we have a trace on an infinite-dimensional vector space,
which we understand to be the limit  of the corresponding finite-dimensional traces
taken on functions of bounded degree. 
 We will see in a moment that the limit exists
so long as the real part of $s$ is sufficiently large. 
We first verify that this definition generalizes the familiar example of local $L$-factors.  

 \begin{lem} \label{linearL}
Suppose $\check{X} = \mathbf{A}^n$
and the action is linear, arising from a representation
$\varphi: \check{G} \rightarrow \GL_n$;    then
the local $L$-function $L(\check{X}, \varphi_v,s)$
  coincides with the standard
  local $L$-factor  attached to the $n$-dimensional representation $\varphi$. 
   \end{lem}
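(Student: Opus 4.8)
The statement to prove is Lemma~\ref{linearL}: for $\check X = \mathbf A^n$ with linear $\check G$-action through $\varphi$, the trace definition \eqref{eq: nonlinearLdef} recovers the classical local $L$-factor $\det(1 - \varphi(\mathrm{Frob}_v) q_v^{-s})^{-1}$. The plan is to compute the trace of $\mathrm{Frob}_v \times q_v^{-s}$ on $\kk[\check X] = \kk[x_1,\dots,x_n]$ directly, degree by degree, using the decomposition of the polynomial ring into symmetric powers of the dual representation.

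First I would set up the grading. Since the action is linear, the $\Ggr$-action (scaling) gives $\kk[\check X] = \bigoplus_{d \ge 0} \mathrm{Sym}^d((\kk^n)^*)$, where the $d$-th graded piece has $\Ggr$-weight $d$, so $q_v^{-s}$ acts on it by $q_v^{-ds}$. On $\mathrm{Sym}^d((\kk^n)^*)$ the operator $\mathrm{Frob}_v$ acts (via pullback by the right action, i.e. by $\varphi(\mathrm{Frob}_v)^{-1}$ geometrically, hence dually by $\varphi(\mathrm{Frob}_v)$ on functions — I would track the convention from \S\ref{Xdef} carefully here) as $\mathrm{Sym}^d$ of the relevant matrix. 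So the trace in \eqref{eq: nonlinearLdef} is $\sum_{d \ge 0} q_v^{-ds}\,\mathrm{tr}\bigl(\mathrm{Sym}^d(\alpha)\bigr)$ where $\alpha = \varphi(\mathrm{Frob}_v)$ acting on the appropriate $n$-dimensional space.

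Second, I would invoke the standard generating-function identity for symmetric powers: if $\alpha$ has eigenvalues $\lambda_1,\dots,\lambda_n$, then $\sum_{d\ge 0} t^d \,\mathrm{tr}(\mathrm{Sym}^d(\alpha)) = \prod_{i=1}^n (1-\lambda_i t)^{-1} = \det(1 - t\alpha)^{-1}$, valid as a formal power series and convergent for $|t|$ small, i.e. here for $\mathrm{Re}(s)$ large enough that $q_v^{-\mathrm{Re}(s)}\max_i|\lambda_i| < 1$. Setting $t = q_v^{-s}$ gives exactly $\det(1 - \varphi(\mathrm{Frob}_v)q_v^{-s})^{-1}$, the classical unramified local $L$-factor, and this simultaneously justifies the claim from the text that the limit of finite-dimensional traces exists for $\mathrm{Re}(s)$ large. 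The convergence also matches the "$X$ conical, $\varphi$ fixes only $0$" hypothesis: linear $\mathbf A^n$ is conical, and the parameter fixing only the origin is the condition ensuring no eigenvalue forces a pole at $s$ in the convergence range (it is automatically consistent here).

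The only real subtlety — and the step I would be most careful about — is bookkeeping: whether the matrix appearing is $\varphi(\mathrm{Frob}_v)$ or its inverse or transpose-inverse, given the convention in \S\ref{Xdef} that group elements act on functions by pullback along the \emph{right} action. Since $\det(1-t\alpha)^{-1}$ and $\det(1-t\alpha^{-1})^{-1}$ differ, one must check that the convention, together with the identification of the "standard" local $L$-factor (which is itself $\det(1-\varphi(\mathrm{Frob}_v)q_v^{-s})^{-1}$ for geometric Frobenius under the class field theory normalization fixed in \S\ref{notation}), lines up. I expect everything to match on the nose precisely because both sides use the same (geometric Frobenius) normalization; the pullback-by-right-action convention is exactly what is needed to turn $\mathrm{Sym}^\bullet$ of the geometric action on $\check X$ into $\mathrm{Sym}^\bullet$ of $\varphi(\mathrm{Frob}_v)$ on functions. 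This is a short verification rather than a genuine obstacle.
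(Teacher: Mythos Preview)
Your proposal is correct and follows essentially the same route as the paper: decompose $\kk[\check X]$ into graded pieces $\mathrm{Sym}^d$, apply the generating function identity $\sum_d t^d\,\mathrm{tr}(\mathrm{Sym}^d\alpha)=\det(1-t\alpha)^{-1}$, and note that the pullback-by-right-action convention makes the action on functions agree with symmetric powers of the standard representation. The paper's proof is a one-line version of exactly this, with the same remark about conventions.
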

  \proof 
We compute: 
\begin{equation} \label{basic defn}\sum_{n}  q_v^{-ks} \mathrm{trace}(\mathrm{Frob}_v| \mathrm{Sym}^k \mathbf{A}^n)=
\det(1-q^{-s} \mathrm{Frob}_v)^{-1}\end{equation} as required. 
We emphasize here that $\check{G}$ acts on the right on the ambient $\mathbf{A}^n$, but the associated
action on functions, defined by $g \cdot f(x) = f(xg)$, corresponds to symmetric powers of the standard representation
where $\GL_n$ acts on the left. 
 \qed

Regarding the convergence of these nonlinear local $L$-functions, we decompose the coordinate ring of $\check{X}$ into $\Ggr$-weights, in which only nonnegative weights appear by our definition of conical. Then there exists an integer $d \geq 0$ such that $\kk[\check{X}]$ is generated in degrees $\leq d$, and each homogeneous piece $\kk[\check{X}]_{(j)}$ for $1 \leq j \leq d$ is a finite dimensional $\check{G}$-representation. Then we see that $L(\check{X}, \varphi_v, s)$ is dominated by 
a function of the form 
$$\prod_{1 \leq j \leq d} \prod_{\lambda \in \Lambda_j} \frac{1}{1-q_v^{-j\sigma} \lambda}$$
where $\sigma$ is the real part of $s$, and $\Lambda_j$ is the set of absolute values of eigenvalues of $\mathrm{Frob}_v$ on $\kk[\check{X}]_{(j)}$. In particular, the limit defining $L(\check{X},\varphi_v,s)$ always exists when $\mathrm{Re}(s)$ is large enough.

\subsubsection{Global nonlinear $L$-functions (conical case)} \label{RS} 
We now define the {\em nonlinear $L$-function attached to $\check{X}$}
via
 \begin{equation} \label{LXdef0prod}
L(\check{X}, \varphi, s) =  \prod_v L(\check{X}, \varphi_v, s)\end{equation} as an Euler product. In general it will not have
 meromorphic continuation in $s$. 
 However, by the discussion after Lemma \ref{linearL}, we see
 that at least it will be absolutely convergent for $\mathrm{Re}(s) \gg 1$ in the conical case.

In particular, if $f$ is an everywhere unramified automorphic form on $G$, 
with Galois parameter $\varphi: \Gamma_F \rightarrow \check{G}(\kk)$, 
then $L(\check{X}, \varphi, s)$ is defined as a meromorphic function of $\mathrm{Re}(s)$ 
sufficiently large. Moreover, {\em this meromorphic function can be defined in terms of $f$ alone}, i.e.
whether or not one knows that the Galois parameter $\varphi$ exists: the product in \eqref{LXdef0prod}
refers only to the various Satake parameters $\mathrm{Frob}_v \in \check{G}(\kk)$, which are of course determined by the Hecke eigenvalues of $f$. 
This pleasant state of affairs is special to the case of $\check{X}$ conical;
in general, as discussed in \S \ref{nonconical},  we really need access to the Galois parameter in order
to define a nonlinear $L$-function.
  
  \subsubsection{The spectral period (conical case)}
Of particular importance to us will be the nonlinear $L$-function
evaluated at $\frac{1}{2}$ -- at least when this makes sense, that is to say,  when it can be meromorphically continued
to a function defined at that point.

We define
the {\em spectral period} attached to $\check{X}$ and the automorphic form $f$  via the rule 
\begin{equation} \label{LXdef0}  L_{\check{X}}(\varphi) :=
\left[ \mathfrak{z} 
\Delta^{\frac{\varepsilon -\dim \check{X}}{4}} \right] \times  L(\check{X}, \varphi, \frac{1}{2}), \end{equation}
where 
\begin{equation} \label{zdef}  \mathfrak{z} = \mbox{
  the scalar by which 
 $\check{\eta}(\partial^{-1/2}) \in G(\mathbb{A})$
acts on $f$.}
\end{equation}
 Here
$\check{\eta}: \check{G} \rightarrow \Gm$ is the eigenmeasure for $\check{X}$ and $\varepsilon$ is the $\Ggr$-weight on the eigenmeasure, as defined in \eqref{etadef}. Note that 
$\check{\eta}$ dualizes to a  central cocharacter of $G$ denoted  in \eqref{LXdef0} by the same letter. 

We give this the grandiose name of  ``spectral period''
because (for reasons not to be recalled here, but disucssed in \cite{BZSV}) it 
in fact is very closely analogous to the automorphic period; in particular the normalization factor
arises from considerations described in {\em op. cit.}
In our application, we will  understand it to be defined
only when the corresponding function of $s$ admits a meromorphic continuation to $s=\frac{1}{2}$.

%
 
\subsubsection{When is the global nonlinear $L$-function $L(\check{X}, \varphi, s)$ actually a ratio of $L$-functions?} \label{CIsec}
  In general, $L(\check{X}, \varphi, s)$ is not a ratio of $L$-functions, and we would 
  not expect it to have meromorphic continuation to the full complex plane. However,
  it {\em will} be a ratio of $L$-functions when $\check{X}$ is complete intersection, as we now explain. 
  
Indeed --- although we will not explicitly use it in this paper ---
  Lemma \ref{linearL} can be generalized
  using the tangent complex; this 
  allows us to assign to the point $0 \in \check{X}$
  an infinite sequence $T_i (i \geq 0)$ of vector spaces, 
  the ``cohomology of the tangent complex.''
  Here $T_0$ recovers the usual tangent space to $\check{X}$ at $0$, and for us
  all $T_i \ \ (i \geq 0)$ are finite dimensional.
  Since $0$ is fixed by $\check{G} \times \Ggr$,
  each $T_i$ is a  graded $\check{G}$-representation;  write, accordingly,
  $T_i^{(d)}$ for the $d$th graded piece, which is nonzero only for $d \geq 1$. 
  Then one can prove (cf. \S \ref{QuillenSection}) the following expression for the $L$-factor in
  terms of standard ones:  
\begin{equation} \label{Tsf} L_v(\check{X}, \varphi,s) = \prod_{i \geq 0, d \geq 1}  L_v(T_i^{(d)}, \varphi,   ds)^{(-1)^i}.\end{equation} 
  understood as a function of meromorphic functions in
a right half-plane. 
  Now:
  \begin{itemize}
  \item $\check{X}$ is smooth at $0$ if and only if $T_i$ vanishes
  for all $i > 0$. In this case, \eqref{Tsf} recovers Lemma \ref{linearL}. 
  \item $\check{X}$ is a {\em locally complete intersection} at $0$ 
  if and only if $T_i$ vanishes for $i >1$ (\cite{Quillen} Theorem 5.4(iv)).
  In this case, \eqref{Tsf}
  expresses the nonlinear $L$-function as a ratio of standard $L$-functions: 
    $$ L(\check{X}, \varphi,s) =  \frac{ L(T_0, \varphi,  s)}{  \prod_{d} L( T_1^{(d)}, \varphi,  ds)} .$$ 
A typical case in which this occurs is when $\check{X}$ is defined inside a vector space $V$ 
 by the common zero-locus of $G$-invariant homogeneous polynomials $P_1, \dots, P_t$ of degrees $d_1, \dots, d_t$,
 all assumed larger than $1$, 
 and the $P_i$ form a regular sequence. 
  In this case,  $\check{X}$ is of dimension $\dim(V)-t$, 
$T_0 = V$, and $T_1$ is $t$-dimensional, with generators in degree $d_1, \dots, d_t$, and we get
\begin{equation} \label{LCI formula} L(\check{X}, \varphi,s) = \frac{L(V, \varphi, s)}{\prod_{i=1}^t \zeta(d_i s)}.\end{equation} 
This interpretation of the right hand side as a nonlinear $L$-function attached to a cone
was a key realization in developing the ideas of this paper. See also the motivating example in \S \ref{RS0}.
\item 
  In the general case it is  usually the case that infinitely many $T_i$ are nonvanishing. In this case, the corresponding
Euler product coincides with an infinite product of $L$-functions and will have an essential boundary. This is the Estermann phenomenon, well-known in analytic number theory.
\end{itemize}

%
%
 
 \subsection{The nonconical case} \label{nonconical} 
Although
not necessary for the main results of this paper,
the general duality formalism applies a similar definition to other $\check{X}$,
although it requires explicit access to a 
Galois parameter
$\varphi: \Gamma \rightarrow \check{G}(\kk)$
and not merely the Satake parameters. 
 
 In this case, 
 $\Gamma$ acts on $\check{X}$ through $\varphi$ and
we will assume that the fixed point set is discrete, otherwise
we regard the definition as invalid.
 
 Let $x_0 \in \check{X}(\kk)$ be an arbitrary fixed point of the $\Gamma$-action.
 It is then also fixed by $\Ggr$.  
  We may then define the $L$-function attached to $(\check{X}, x_0)$ to be
 ``the nonlinear $L$-function attached to the linearization of $\check{X}$ at $x_0$.''
 Indeed, our previous discussion in the conical case depended only on the germ of $\check{X}$ at $0$,
 and when reformulated in those terms applies similarly here. 
 The result is the following definition: 
\begin{equation} \label{CLR} L_{x_0}(\check{X},\varphi, s) := \prod_{v}    \mathrm{trace}\left( \mathrm{Frob}_v \times q_v^{-s} | \widehat{\mathcal{O}}_{\check{X}, x_0}\right)\end{equation} 
 where the trace on the completed local ring $\widehat{\mathcal{O}}$ is defined as the sum of traces over the finite dimensional spaces
$ \frac{ \mathfrak{m}^n}{\mathfrak{m}^{n+1}}$ with $\mathfrak{m}$ the maximal ideal. Each factor defines a rational function of $q_v^{-s}$; 
for the reasons recalled in \S \ref{CIsec} the product defines a rational function of $q^{-s}$ if $\check{X}$ is complete intersection at $x_0$;
otherwise it may not have an analytic continuation in $q^{-s}$.   

We then define the 
{\em normalized} spectral period by taking the value at $s=1/2$, if defined, and summing over fixed points (cf. \eqref{LXdef0}): 
 \begin{equation} \label{LXdef} L_{\check{X}}(\varphi) := \mathfrak{z} 
\Delta^{\frac{\varepsilon -\dim \check{X}}{4}}  \sum_{x \in \mathrm{Fix}(\varphi,X)}  
L_x(\check{X},  \varphi, \frac{1}{2} )  \end{equation}
Here $\mathfrak{z}$ is as in the conical case and $\varepsilon \in \ZZ$ is the $\Ggr$-weight on the eigenmeasure as in \eqref{etadef}. We can equivalently express $\mathfrak{z}$ as the value of   $\check{\eta} \circ \varphi: \Gamma_F \rightarrow \kk^{\times}$ at $\partial^{-1/2}$, viewed as an element of $\Gamma^{\mathrm{ab}}$ via global class field theory, which is compatible with the prior definition in the conical case \eqref{LXdef}. 

\section{Weak duality}

Let $(G_1, G_2)$ be split reductive groups 
in duality. (We avoid the usual notation $G$ and $\check{G}$ to better bring out the symmetric nature of the situation.)
We shall say that $G_i$-varieties $X_i$
are   {\em \wdd} if, informally speaking, we have the following equalities of periods evaluated on cusp forms
$$\mbox{automorphic $X_1$-period} \sim \mbox{spectral $X_2$-period} $$
$$\mbox{automorphic $X_2$-period} \sim \mbox{spectral $X_1$-period} $$
where $\sim$ means agreement up to a half-integer power of $q$. 
The formal definition reads as follows: 

\begin{definition} \label{weakly dual def}
Follow notation as in \S \ref{notation}. 
We say that the $G_i$-varieties $X_i$ (defined over $\Q$) are \wdd  if, 
for a suitable $N$, there are 
  integral models $\mathcal{X}_i$ of $X_i$ over $\Z[1/N]$, \footnote{The formal definition of this does not matter much, but one could take
  ``a flat scheme over $\Z[1/N]$ equipped with an action of the split form of $G_i$.''}
 such that, for any finite field $\F$ of size $q$ with characteristic not dividing $N$,
and any projective smooth curve $\Sigma$ over $\F$ with discriminant $\Delta$, we have equalities:  
\begin{equation}
\label{weakduality}
P_{X_1}(f_1) = \Delta^{a_{12}/4} L_{X_2}(\varphi_1) \mbox{ and }P_{X_2}(f_2) = \Delta^{a_{21}/4} L_{X_1}(\varphi_2) \end{equation}
whenever: 
\begin{itemize}
\item $X_i = \mathcal{X}_i \otimes \F$; 
\item $f_1$ (resp. $f_2$) is any Whittaker-normalized as in \eqref{Whitnorm}, everywhere unramified,
cuspidal, tempered automorphic form on $G_1$ (resp. $G_2$)
with Langlands parameter $\varphi_1$ valued in $G_2$ (resp. $G_1$). 

\item $P_{X_i}(f_i)$ are pairings of the $f_i$ with the Poincar{\'e} series for $X_i$, as defined in
in \S \ref{autperiods} 
\item $L_{X_2}(\varphi_1)$ and $L_{X_1}(\varphi_2)$ are nonlinear $L$-functions, defined
as in \S \ref{conical} and \S \ref{nonconical}. 
\item the equalities are understood to hold whenever 
the fixed locus of $\varphi_i$ on the $X_i$ are discrete. In particular, if this fixed locus is empty,
the period must vanish.
\item $a_{12}, a_{21} \in \Z$ are integers, which we term the {\em discrepancy} of the pair $(X_1, X_2)$.

\end{itemize}
\end{definition} 

\begin{rem} In fact, \cite{BZSV} formulates this when the fixed locus on $T^*X$ is discrete -- a stronger condition.
\end{rem}

The proposal of \cite{BZSV} amounts to a large class of
pairs $(X_1, X_2)$, both smooth, which are \wdd and for which
  the discrepancies $a_{12}=a_{21}$
vanish identically.   For example, the examples in the following theorem all fall under the umbrella
of the hyperspherical duality proposed in \cite{BZSV}: 

\begin{thm}\label{thm: smooth}
Each of the pairs in the table below is, in the sense of Definition
\ref{weakly dual def},   \wdd  with discrepancy zero. \footnote{In the third row, i.e., the Godement--Jacquet and Rankin--Selberg example, one has central tori acting trivially on the relevant varieties. In this situation a regularization is needed, which we explain in Section \ref{sect: cancelZetaOnes}}
\end{thm}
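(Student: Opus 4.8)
The plan is to establish each row of the table separately, but along a common template: for each pair $(G_1,X_1)$, $(G_2,X_2)$ we must (a) identify an eigenmeasure on $X_i^\circ$, read off the character $\eta_i$, the integer $\varepsilon_i$, and the scalar $\mathfrak{z}$, and thereby pin down the exact normalization factors appearing in \eqref{thetaXdef}, \eqref{PXdef}, \eqref{LXdef0}; (b) unfold the Poincar\'e series $\theta_{X_i}$ against the cusp form $f_i$, using cuspidality to kill the boundary terms and the $G$-orbit decomposition of $X_i(F)$ to collapse the sum to a single Whittaker-type integral; (c) evaluate that integral via the Casselman--Shalika formula \eqref{shifted CS}--\eqref{topolish}, so that the local contribution at $v$ becomes a sum $\sum_{\chi} q_v^{-\langle\chi+2m_v\check\rho,\rho\rangle} s_{\chi+2m_v\check\rho}(A_v)$ over the relevant weights $\chi$; and (d) recognize this, place by place, as the local nonlinear $L$-factor \eqref{eq: nonlinearLdef} of the dual space $\check X = X_{3-i}$ evaluated at $s=\tfrac12$, keeping careful track of all powers of $\Delta=q^{2g-2}$ and $|\partial|=q^{-(2g-2)}$ introduced in steps (a)--(c), and checking that they cancel so that the discrepancy $a_{12}=a_{21}=0$.

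The combinatorial heart of the matter is step (d): for the standard examples in the table (Hecke/$\GL_1$, Whittaker itself, Godement--Jacquet and Rankin--Selberg, and the Gross--Prasad/Waldspurger-type cases) the decomposition of $\F[\check X]$ into $\check G$-irreducibles is classical, and the identity we need is exactly the assertion that the Whittaker expansion of the $X_i$-Poincar\'e series is the generating function $\sum_d (\dim\text{-free trace of }\mathrm{Frob}_v\text{ on }\F[\check X]_{(d)})\,q_v^{-ds/?}$. Concretely: for a linear $\check X=\mathbf A^n$ this is \eqref{basic defn} (Lemma \ref{linearL}); for $X$ = basic affine space one uses the Casselman--Shalika formula as the statement that the theta series of $X$ \emph{is} the Whittaker function, so the ``dual'' nonlinear $L$-function is literally $\sum_\chi s_\chi(A_v)q_v^{-\langle\chi,\rho\rangle+\dots}$, matching $\F[\Gm\check U^0\backslash\check G]$ in the uniform description from the introduction; for the Rankin--Selberg row the needed identity is the classical unfolding of $\sum a_n b_n n^{-s}$ plus the $\zeta(2s)^{-1}$ correction coming from $\mathcal N\subset \mathbf A^2\otimes\mathbf A^2$ being cut out by one quadric, i.e.\ the locally-complete-intersection formula \eqref{LCI formula} with $t=1$, $d_1=2$. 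The Godement--Jacquet row similarly matches $\F[\mathrm{Mat}_n]$ against the standard $L(s,\pi\times\pi^\vee)$-type factor.

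Two normalization subtleties must be dispatched with care. First, the half-integral shift: the twist of $\Phi^0$ by $\partial^{1/2}$ in \eqref{Phidef} and the evaluation point $a_0=e^{2\check\rho}(\partial^{-1/2})$ in \eqref{a0ex} are precisely what make the $m_v$-dependent exponents in \eqref{topolish} land on $s_{\chi+2m_v\check\rho}$ rather than $s_\chi$, and the Whittaker-normalization constraint \eqref{Whitnormnew} is calibrated so that the product over $v$ of these local shifts reassembles into a single global power of $\Delta$ that is absorbed by the prefactor $\Delta^{(\dim X-\dim G)/4}|\partial^{1/2}|^{1/2}$ in \eqref{thetaXdef} and the $\Delta^{(\varepsilon-\dim\check X)/4}$ in \eqref{LXdef0}. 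Verifying this bookkeeping — that every stray $q^{2g-2}$ cancels — is the main obstacle, and the place where the claim ``discrepancy zero'' actually has content; it is genuinely a computation in each row, but a mechanical one given the apparatus of \S\ref{autperiods}--\S\ref{spectralperiods}. Second, the Godement--Jacquet and Rankin--Selberg row has central tori acting trivially on the relevant varieties, producing spurious $\zeta(1)$-type factors on both sides; these must be regularized and checked to cancel, which is deferred to \S\ref{sect: cancelZetaOnes} as indicated in the footnote. With these points handled, assembling the two equalities in \eqref{weakduality} for each row and observing that $a_{12}=a_{21}=0$ completes the proof.
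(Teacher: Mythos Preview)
Your plan has the right general shape --- unfold, compare local factors, chase the powers of $\Delta$ --- but it misidentifies the contents of the table and, more seriously, the unfolding template you describe does not actually apply to two of the four rows.

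First, the table does not contain a Whittaker row, a basic-affine-space row, or any Gross--Prasad/Waldspurger case. The four rows are Iwasawa--Tate ($\Gm$ on $\mathbf{A}^1$, self-dual), the group case ($\GL_n^2$ on $\GL_n$, dual to a twisted version of itself), Godement--Jacquet ($\GL_n^2$ on $\mathrm{Mat}_n$) dual to Jacquet--Piatetski-Shapiro--Shalika ($\GL_n^2$ on $\GL_n\times\mathbf{A}^n$), and Hecke ($\PGL_2$ on $\PGL_2/\Gm$) dual to $\SL_2$ on $\mathbf{A}^2$. The $\mathcal{N}\subset\mathbf{A}^2\otimes\mathbf{A}^2$ and $\zeta(2s)^{-1}$ business you invoke is the motivating \emph{singular} example from \S\ref{RS0}, not any of the smooth rows here.

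Second, and this is the real gap: the ``unfold to a Whittaker integral and apply Casselman--Shalika'' template only runs in rows~1 and~4 (Tate and Hecke). In the group case (row~2), the $\theta$-series unfolds not to a Whittaker integral but to the Petersson inner product $\langle f_1,f_2\rangle_{[\GL_n]}$, and the spectral side is $L(1,\mathrm{Ad},\pi)$ at a fixed point on $\check X=\GL_n$; matching these is precisely the Lapid--Mao formula relating $\|f\|^2/|W_f^0|^2$ to $L(1,\mathrm{Ad})$, which is an external input your template does not supply. Likewise, in the Godement--Jacquet half of row~3 the unfolding goes to a matrix-coefficient zeta integral $\int\mu(g)|g|^{n/2}\Phi^0(g)$, which one evaluates via Godement--Jacquet theory and then \emph{compares to the already-computed group case}; there is no direct Whittaker unfolding. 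Only the J.--PS.--S. half of row~3 and the Hecke side of row~4 follow your template. Finally, on the $(\SL_2,\mathbf{A}^2)$ side of row~4 the cuspidal period is identically zero and the dual fixed-point set is empty, so the content there (as the paper notes) is entirely in an Eisenstein computation which lies outside Definition~\ref{weakly dual def} proper.
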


 Here the $\Ggr$ action acts by scaling on each vectorial component of $X$ and otherwise acts trivially.

\begin{table}[!hbtp]
\begin{tabular}{|c|c|c|c|c|c|}
\hline
 	& $G_1$		& $X_1$  				&  $G_2$ 						&  $X_2$ 				&   \\ \hline
 I.--T. \S \ref{Tate} &  	$\Gm$	&  	$\mathbf{A}^1$	& $\Gm$  						& $\mathbf{A}^1$		&  I.--T. \S \ref{Tate} \\ \hline
group \S \ref{gp} &  	$\GL_n$	&  	$\GL_n^2$	& $\GL_n$  						&$\GL_n^2$		&  (twisted) \S \ref{gp} \\ \hline

G.--J. \S \ref{GJcase} &  	$\GL_n^2$	&  $\mathrm{Mat}_n$& $\GL_n \times \mathbf{A}^n$  	& $\GL_n^2$		&  J.--PS.--S. \S \ref{JPSS}\\ \hline
Hecke \S \ref{Hecke} &  	$\PGL_2$		&  $\PGL_2/\Gm$	& $\mathbf{A}^2$  				& $\SL_2$			&  Eisenstein \S \ref{Eiscase} \\ \hline

\end{tabular}
\end{table}
We will give the proof of this theorem in Appendix \ref{smoothproofs}; there we will also spell out precisely what the actions are (there are some ``transposes'' and ``inverses'' at some points
that are perhaps not obvious).   These are all very standard computations. Our goal in computing them is to simply spell out
how all the constants (i.e. the powers of $q$) match up to give discrepancy zero.
This relies crucially on the precise  shifts and normalizing factors that have been embedded in the statement of duality.
By contrast, our singular examples will {\em not} have discrepancy zero.

\subsection{Convergence issues in the reductive case}\label{sect: cancelZetaOnes}

If $G$ contains a central torus acting trivially on $X$
the integral will diverge.  Nonetheless,
there are plenty of  \wdd cases that involve this situation,
where the above equality holds if we allow ourselves to ``formally cancel''
a divergent factor involving $\zeta(1)$ on both sides; on the automorphic
side, this factor arises from the volume of $[\Gm]$, 
and on the spectral side, it arises from a $\zeta(1)$ factor. 
Although by far the most convenient way to think
about this situation is in terms of such ``formal cancellation,''
 we make the following definition for concreteness.
 
 In the situtation where a nontrivial central torus of $G_i$ acts
 trivially on $X_i$, 
we say that $(G_1, X_1)$ and $(G_2, X_2)$ are \wdd 
if they are ``induced'' from a  \wdd pair
$$(\bar{G}_i, Y_i),$$ 
with $\bar{G}_i$
a ``subquotient'' of $G_i$ that has no central torus acting trivially on $Y_i$.

The meaning of ``subquotient'' and ``induced'' is as follows:
We start with central tori $Z_i \subset T_i$
which are orthogonal to one another.
These define saturated sublattices $X_*(Z_1) \subset X_*(T_1)$ and $X_*(Z_2) \subset X_*(T_2)$.
The free abelian groups $X_*(T_i)$ are in duality, and we can therefore form
the orthogonal complements $X_*(Z_1)^{\perp} \subset X_*(T_2)$ and vice versa;
these correspond to 
 ``orthogonal'' tori $Z_i^{*} \subset T_i$. 
 
These orthogonal tori correspond to reductive subgroups
$G_i^* \subset G_i$,  e.g. $G_2^*$
is the subgroup associated by duality to $G_1/Z_1$; 
and we take
$$\bar{G}_i = G^*_i/Z_i.$$
The meaning of ``induction'' is that 
we pass from a $\bar{G}_i$-space $Y_i$
to the $G_i$-space $G_i \times_{G_i^*} Y_i$.

\subsection{The duality conjecture of \cite{BZSV}, and why  we say ``weak'' duality here.}
 %
%

We should emphasize that the notion of ``numerical weak duality''  is anticipated to be a strictly weaker notion than the duality of \cite{BZSV}. 

The adjective ``numerical'' refers to the fact that we are imposing constraints only on numerical periods, whereas the conjectures of \cite{BZSV} apply at a geometric level. 
The adjective ``weak'' refers to the (more important) fact that we impose constraints only for {\em cuspidal} periods; by contrast the duality of \cite{BZSV} entails rather precise constraints on Eisenstein periods, which amounts to correspondingly precise constraints on the geometry of lower-dimensional orbits. 

To be slightly more precise, we note the following pattern in the examples we study. Pairing $\theta_X$ with cusp forms only probes the
geometry of the open $G$-orbit in $X$, in the sense that smaller-dimensional orbits will not contribute to the automorphic period. However, the automorphic period does not depend {\em solely} on the open orbit, because
the Schwartz function on that orbit depends on the global geometry of $X$. Dually, considering only the spectral $\check{X}$-period of cuspidal $L$-parameters, only a unique fixed point on $\check{X}$ will contribute; \textit{a priori} one can imagine that for $L$-parameters with small image one has to consider more fixed points, or fixed subvarieties on $\check{X}$. However, the spectral period also does not depend {\em solely} on this fixed point, since the completed local ring at that point depends on $\check{X}$. 

 On the other hand, it seems of value to make this purely numerical definition, because
  it is easy to define and easy to compute with. It is reasonable to expect
 that many \wdd pairs are not far from being a dual pair of boundary conditions
 in the stronger sense of \cite{BZSV}.

\newcommand{\Ad}{\mathrm{Ad}}
\newcommand{\Spec}{\mathrm{Spec}}
\newcommand{\OO}{\mathcal{O}}

\section{Geometry of  $U^0 \backslash G$ and $T^0 U^0 \backslash G$}

\subsection{Setup} \label{setup0}
The following general setup will be used in the remaining sections of the paper. 
We advise the reader to look forward at the examples in \S \ref{Examples}
before trying to absorb the abstraction below.  It is worth noting
that the results of this section can also be proved more explicitly and readily ``by hand''
in each case; we have adopted to treat the two cases uniformly at some cost of extra abstraction.
The advantage of uniformity is not just to shorten proofs: it makes clear how the computation
of integral points on an affine closure $\overline{S \backslash G}$ is actually controlled by the root data.

We fix a coefficient field $k$. For our  later applications we will want to take $k$ to mean either the ``automorphic'' or the ``Galois'' coefficient field (see \S \ref{Fandk}). 
\footnote{It might be more uniform to work over $\mathbf{Z}$ at the get-go since an integral model of our group actions is needed for the automorphic calculation, but we will avoid doing so at this stage, leaving the choice of integral model to \S \ref{subsect: intModel}.}

Let $G$ be an arbitrary split reductive group over $k$ whose center is $1$-dimensional. Choose a Borel subgroup $B = UT$ where $U$ is its unipotent radical, and $T$ is a maximal split torus in $G$. 
Let $\theta: G/[G,G] \to \Gm$ be an isomorphism, and let $\check{\theta}: \Gm \dashrightarrow Z(G)$ be a rational character such that $\langle \theta, \check{\theta} \rangle = 1$, where we allow ourselves to pass to a finite cover to define this cocharacter. 
  Note that $\theta$ and $\check{\theta}$
are both determined up to the same sign. 

We shall use the usual notation $\check{\rho}$ for the sum of all positive coroots, and $\rho$ for the sum of all positive roots;
note that  $\langle \check{\rho}, \alpha \rangle = 1 $ for all simple roots $\alpha$.  
Define $$\check{\rho}' := \check{\theta}+\check{\rho}: \Gm \dashrightarrow G,
T^0 := \mathrm{image}(\check{\rho}').
$$
where the dashed arrow again means that we allow ourselves to pass to a finite cover of $\Gm$ on which this is defined.

Let $\Psi: U \rightarrow \mathbf{G}_a$ be the unique 
homomorphism restricting to the identity on each pinned root subgroup,
and let $U^0$ be the kernel of $U$; 
we will also denote $\Psi$ as $u \mapsto \bar{u}$ for short. Set $S=T^0 U^0$. 
 Our main concern in this section is the geometry of the space
  \begin{equation}\label{eqn: cone} X  = \overline{\mbox{$S \backslash G$}} = \mbox{ the affine closure of  $S \backslash G$}\end{equation}
  
  Explicitly, $X$ is the affine variety whose ring of functions are $S = T^0 U^0$-invariant functions
  on $k[G]$.  This is a $k$-variety, and will be denoted $X_k$ if there is ever danger of confusion about the field of definition.

    The main concern of this section is to understand, as explicitly as possible,
  which points on $X$ are ``integral'' (for a suitable integral model  to be defined in \S \ref{subsect: intModel}) in the case
  when $k$ is a nonarchimedean local field.  The main result will be Proposition \ref{intpointsX}.

\subsection{Regular functions on $X$}
 Our first goal is to compute the ring of functions on $X$; this
 is accomplished by the following proposition:
 
\begin{prop}
\label{Xfunction}
Let $k$ be a field of characteristic zero. 
 Let $k[X]$ be the ring of regular functions on $X_k$. Then, as $G$-representation, 
 $$k[X] \cong \bigoplus_{\lambda} \,   V_{\lambda}^*$$
where $V_{\lambda}$ is the representation of highest weight $\lambda$
and the sum is taken over all $\lambda \in X^*(T)$ such that 
\begin{equation} \label{relevant}  0 \leq    \langle \lambda, \check{\rho}' \rangle \leq \min_{\check{\alpha} \, \text{ simple}} \langle \lambda, \check{\alpha} \rangle.\end{equation}

\end{prop}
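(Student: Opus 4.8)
The plan is to compute $k[X] = k[G]^{S}$ by first using the $G$-action on $k[G]$ (by right translation) to decompose $k[G]$ as a $G$-representation, and then imposing invariance under $S = T^0 U^0$. By the Peter--Weyl theorem for the reductive group $G$ over a field of characteristic zero, we have a $G \times G$-equivariant decomposition $k[G] \cong \bigoplus_{\lambda \text{ dominant}} V_\lambda \otimes V_\lambda^*$, where the first factor carries the left translation action and the second the right translation action. Since $S$ acts by right translation, taking $S$-invariants gives $k[X] = \bigoplus_\lambda V_\lambda^* \otimes (V_\lambda)^{S}$. So the entire problem reduces to computing $\dim (V_\lambda)^{T^0 U^0}$ and showing it is $1$ precisely when \eqref{relevant} holds, and $0$ otherwise.

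Next I would analyze $(V_\lambda)^{U^0}$. The key observation is that $U^0 = \ker(\Psi)$ where $\Psi \colon U \to \Ga$ is the sum of the simple-root coordinates, so $U/U^0 \cong \Ga$ and $(V_\lambda)^{U^0}$ is a $U/U^0 \cong \Ga$-representation, also acted on by $T$ (which normalizes $U^0$). The standard fact here is that $(V_\lambda)^{U}$ is one-dimensional, spanned by the highest weight vector $v_\lambda$; I would then show, using $\mathfrak{sl}_2$-theory for each simple root or the structure of the $\Ga$-action generated by the sum of the simple-root raising operators, that $(V_\lambda)^{U^0}$ is spanned by the vectors obtained from $v_\lambda$ by applying strings of lowering operators $f_{\alpha_i}$ along simple roots in a way that stays a $U^0$-invariant — concretely, $(V_\lambda)^{U^0}$ has a weight basis indexed by those weights $\mu \le \lambda$ reachable by such moves, and the highest-weight line $k v_\lambda$ sits inside it. The cleanest route: $(V_\lambda)^{U^0}$ is a finite-dimensional $\Ga$-module, hence a sum of Jordan blocks for the nilpotent generator $e = \sum_i e_{\alpha_i}$; the $\Ga$-fixed vectors inside it are exactly $(V_\lambda)^{U} = k v_\lambda$, which is one-dimensional. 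The $T^0$-weights appearing on $(V_\lambda)^{U^0}$ are then controlled by how far down a Jordan string one must travel, and $T^0 = \mathrm{image}(\check\rho')$ acts on the $\mu$-weight space by $t^{\langle \mu, \check\rho'\rangle}$.

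Then I would impose $T^0$-invariance on $(V_\lambda)^{U^0}$: a vector in $(V_\lambda)^{U^0}$ is $T^0$-fixed iff it lies in the sum of weight spaces $\mu$ with $\langle \mu, \check\rho' \rangle = 0$ that also survive into $(V_\lambda)^{U^0}$. The combinatorial heart of the argument is to show that the space $(V_\lambda)^{T^0 U^0}$ is nonzero (and then one-dimensional) exactly under \eqref{relevant}: the lower bound $0 \le \langle \lambda, \check\rho'\rangle$ and the upper bound $\langle \lambda, \check\rho'\rangle \le \langle \lambda, \check\alpha\rangle$ for every simple coroot $\check\alpha$ should come out of the requirement that, starting from $v_\lambda$ (which has $T^0$-weight $\langle \lambda, \check\rho'\rangle$), one can move down through $U^0$-invariant directions to reach a vector of $T^0$-weight $0$, and that a weight-$0$ vector one reaches this way is in fact $U$-invariant, forcing uniqueness. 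I expect this last combinatorial step — pinning down exactly which $\mu$ are reachable and matching the two inequalities to the geometry of the $\Ga$-Jordan decomposition — to be the main obstacle; the Peter--Weyl reduction and the identification of $(V_\lambda)^{U^0}$ as a $\Ga$-module are routine, but translating ``reachable by $U^0$-moves, lands at $T^0$-weight $0$'' into precisely \eqref{relevant}, including the sharp role of $\check\rho' = \check\theta + \check\rho$ and the minimum over simple coroots, requires care with $\mathfrak{sl}_2$-strings and the relation $\langle \check\rho, \alpha\rangle = 1$ for all simple $\alpha$.
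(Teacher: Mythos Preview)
Your overall strategy coincides with the paper's: reduce via Peter--Weyl to computing $\dim V_\lambda^{S}$, analyze $(V_\lambda)^{U^0}$ as a module for $U/U^0 \cong \Ga$, then impose $T^0$-invariance. Your observation that $(V_\lambda)^{U^0}$ is a \emph{single} Jordan block for $e=\sum_\alpha e_\alpha$ (because its $\Ga$-fixed subspace is $(V_\lambda)^U = k v_\lambda$, one-dimensional) is correct and is precisely what the paper's injection $I\colon V_\lambda^{U^0}\hookrightarrow k[x]$ encodes: polynomials of bounded degree under translation form a single Jordan block. Since each simple root satisfies $\langle\alpha,\check\rho'\rangle = 1$, the element $e$ raises $T^0$-weight by $1$; so if the block has size $d+1$, its $T^0$-weights are $\langle\lambda,\check\rho'\rangle,\langle\lambda,\check\rho'\rangle-1,\ldots,\langle\lambda,\check\rho'\rangle-d$, and a $T^0$-fixed vector exists (and is then unique) iff $0\le\langle\lambda,\check\rho'\rangle\le d$.

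The genuine gap is the computation of $d$. Your proposed route of ``applying strings of lowering operators $f_{\alpha_i}$ in a way that stays $U^0$-invariant'' does not work as written: already $f_\alpha v_\lambda$ fails to be $U^0$-invariant, since $(e_\alpha-e_\beta)f_\alpha v_\lambda = \langle\lambda,\check\alpha\rangle v_\lambda \ne 0$ for $\beta\ne\alpha$ simple. More fundamentally, note that $T$ does \emph{not} normalize $U^0$ (only $T^0$ does, because all simple roots restrict to the same character on $T^0$), so $(V_\lambda)^{U^0}$ carries no $T$-weight decomposition to exploit. The paper supplies the missing computation via the Borel--Weil model: realize $V_\lambda$ as functions $f$ on $G$ with $f(bg)=w\lambda(b)f(g)$, and send a $U^0$-invariant $f$ to the polynomial $u\mapsto f(wu)$ on $U/U^0\cong\Ga$. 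The image consists exactly of polynomials of degree $\le\min_\alpha\langle\lambda,\check\alpha\rangle$; the bound is obtained by checking, for each simple $\alpha$, whether the associated function on the big cell extends across the codimension-one Schubert divisor, which reduces to an explicit $2\times 2$ computation after pulling back along the $\SL_2$-embedding for $\alpha$. This gives $d=\min_\alpha\langle\lambda,\check\alpha\rangle$, after which your Jordan-block argument finishes the proof.
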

We say that a   weight $\lambda$ is \textit{relevant} if it satisfies these inequalities.
A relevant weight is automatically dominant. 
We will analyze the cone of relevant weights more carefully in \S \ref{relevant}.

\proof (of Proposition \ref{Xfunction})  
We have
\begin{equation} \label{Xiso}  k[X] \simeq \bigoplus_{\lambda} V_{\lambda}^{S} \otimes V_{\lambda}^*.\end{equation}
where we associate to $v_{\lambda} \otimes v_{\lambda}^*$
the function $\langle v_{\lambda} , g  v_{\lambda}^* \rangle$
on $(S\backslash G)_k$. The proof of the Proposition amounts to computing the dimension of $S$-invariant vectors in $V_\lambda$.

Let $w$ be a representative
in $G$ for a long Weyl element, i.e. normalizing $T$ and sending $B$ to its opposite.
By Borel--Weil,  the rule sending
$v \in V_{\lambda}$ to the function $\langle g v, v^* \rangle$ for $v^*$
a highest weight vector  of the dual representation $V_{-w\lambda}$ gives (after 
fixing an invariant pairing $V_{\lambda} \times V_{-w\lambda} \rightarrow k$):
 \begin{equation} \label{fdef} 
V_{\lambda} \simeq \{ f \in k[G]: f(b g) = w\lambda(b) f(g) \}.\end{equation}
where $G$ acts by right translation on the right hand side.

The isomorphism \eqref{fdef} induces an injection
$$ \mathrm{I}:  V_{\lambda}^{U^0} \rightarrow \mbox{polynomials on $\mathbf{G}_a$}$$
sending a function $f$ 
to its restriction to $f(w u)$, which 
descends from $U$ to $U/U^0 \simeq \mathbf{G}_a$. 
The action of $U/U^0$ on the  domain of $\mathrm{I}$
corresponds to translation by $U^0 \backslash U \simeq \mathbf{G}_a$ on the codomain.
We claim that:
\begin{quote} (*)  the image of $\mathrm{I}$  precisely consists of polynomials $P$
 that have degree at most $\langle \lambda, \check{\alpha} \rangle$ for all simple roots $\alpha$. 
 \end{quote}

 Any polynomial $P$ on $\mathbf{G}_a$ determines a function $f_P$ on the big cell $B w B$ satisfying \eqref{fdef},
and to check (*) we must determine when $f_P$ extends to $G$;
for this it is sufficient to check that it extends over all codimension one
$B$-orbits, which reduces us to  a similar question for $\SL_2$, 
by means of pullback along the $\SL_2$-embeddings associated to simple roots.
Indeed, the pullback of $f_P$ along such an embedding defines a function on the big cell for $\SL_2$
satisfying still

$$\bigg\{f \in k[\mathrm{SL}_2] \, : \, f\bigg(\begin{bmatrix} t&x\\0&t^{-1}\end{bmatrix}g\bigg) = t^{-d} f(g)\bigg\}$$
where $d =\langle \lambda, \check\alpha \rangle$, with $\alpha$ is the simple root defining the $\SL_2$-embedding. 
Consider $f$ whose restriction  to the unipotent radical  $U \cong \Ga$ is in standard coordinates a monomial of degree $n$, i.e., 
$$f_P \bigg(w\begin{bmatrix} 1&x\\0&1\end{bmatrix}\bigg) = x^n$$

Then we have
$$f_P\bigg(\begin{bmatrix} t& y\\0&t^{-1}\end{bmatrix} \begin{bmatrix} 0&1\\-1&0\end{bmatrix} \begin{bmatrix} 1&x\\0&1\end{bmatrix} \bigg) = f_P \bigg(\begin{bmatrix}-y & -xy+t\\-t^{-1} & -t^{-1}x\end{bmatrix}\bigg) = t^{-d}x^n$$
In other words, on the big cell $f_P$ is given by
$$f_P\bigg(\begin{bmatrix} x&y\\z&w\end{bmatrix}\bigg) = z^{d-n}w^n$$
which extends to all of $\mathrm{SL}_2$ if and only if $d \geq n$, so we have verified our claim (*). 

Thus $\mathrm{I}$ identifies $V_{\lambda}^{U^0}$ with polynomials of degree at most $\min \langle \lambda, \check{\alpha} \rangle$.  We claim
that the image of $T$-invariant vectors
can only consist of multiples of the monomial $x^{\langle  \check{\rho}', \lambda \rangle}$.
 Indeed, in order that $v \in V_{\lambda}^{U^0}$ be fixed by $T^0$,
the corresponding function $f_v$  (with respect to \eqref{fdef}) must satisfy
$f_v(w u t) = f_v(w  u)$; the left
hand side equals  
 $f_v(t^{w}w  t^{-1} u t) =  t^{\lambda} f(w \Ad(t^{-1}) u)$.  
Now,  taking $t= x^{{\check{\rho}'}}$, 
$\Ad(t^{-1})$ effects a dilation by $x^{-1}$
 on $U/U^0 \simeq \mathbf{G}_a$,  and $t^{\lambda}= x^{ \langle \lambda, \check{\rho}' \rangle}$.
This proves our claim, and concludes the proof of the Proposition. 
 In fact, we have proved that there exists a unique function
  $\tilde{P}_{\lambda}$ on $G$ such that
  \begin{equation} \label{Plambdadef} \tilde{P}_{\lambda}(bwu) =  b^{w \lambda} \bar{u}^{\langle \check{\rho}', \lambda \rangle},\end{equation} 
and it corresponds under \eqref{fdef} to the unique line in $V_{\lambda}^S$.
  \qed

 \subsection{The cone of relevant weights}  \label{relevant section}
 The set defined by \eqref{relevant} defines a convex cone inside $X^*(T) \otimes \mathbf{R}$, that is to say, a closed convex set
invariant by scaling under $\R_{>0}$. 
We will describe it more explicitly with the following notations. 

Write $\rho, \check{\rho}$ in terms of simple roots and coroots: 
\begin{equation} \label{nadef}  \check{\rho} = \sum_{\check{\alpha}} n_{\check{\alpha}} \check{\alpha}, \rho = \sum_{\alpha} n_{\alpha} \alpha.\end{equation}
We further write
\begin{equation} \label{adef} a = \langle \rho, \check{\rho} \rangle = \sum n_{\alpha} = \sum \check{n}_{\alpha}.\end{equation}
where the $\alpha$-sums range over simple roots or simple coroots. 
For the computation of $\langle \rho, \check{\rho}\rangle$ we used that  $\langle \rho, \check{\alpha} \rangle = \langle \check{\rho}, \alpha \rangle =1$
for $\alpha$ simple.

Next we write $\varpi_{\alpha} \in X^*(T) \otimes \R$ 
for the dual basis to the simple coroots $\check{\alpha}$, explicitly:
$\varpi_{\alpha}$ pairs trivially with $\check{\beta}$ for $\beta \neq \alpha$
and with $\check{\theta}$, and pairs to $1$ with $\check{\alpha}$. 
 Note that each element 
$ \varpi_{\alpha} - n_{\check{\alpha}} \theta$ is orthogonal to  
$\check{\rho}'$ and $\langle \varpi_\alpha-n_{\check{\alpha}}\theta, \check{\beta}\rangle = \delta_{\alpha\beta}$, so $ \varpi_{\alpha} - n_{\check{\alpha}} \theta$ belongs to the relevant cone.  Since the element $\rho - (a-1) \theta$  pairs
to $1$ with each simple coroot and with $\check{\rho}'$, we see that it also belongs to the relevant cone.

\begin{lem}  The vectors  named above, that is, 
\begin{equation} \label{extremal relevant} \big\{\varpi_\alpha - n_{\check{\alpha}} \theta: \alpha \text{ simple root}\big\} \cup \big\{\rho-(a-1)\theta\big\}\end{equation}
span the extreme rays of the (real) cone of relevant weights, that it to say, the relevant cone consists of all linear combinations
\begin{equation} \label{basic0} \sum_{\alpha \textrm{ simple}} x_{\alpha}  \left(  \varpi_{\alpha} - n_{\alpha} \theta \right) + y (\rho - (a-1) \theta)\end{equation}
for  nonnegative  reals $x_{\alpha},y$. 
\end{lem}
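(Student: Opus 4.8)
The plan is to show that the relevant cone $\mathcal C$, defined by the inequalities $0 \le \langle \lambda, \check\rho'\rangle \le \langle \lambda, \check\alpha\rangle$ for every simple root $\alpha$, is exactly the cone spanned over $\R_{\ge 0}$ by the vectors in \eqref{extremal relevant}. One inclusion is already done in the text: each listed vector has been checked to lie in $\mathcal C$ (the $\varpi_\alpha - n_{\check\alpha}\theta$ are orthogonal to $\check\rho'$ and pair to $\delta_{\alpha\beta}$ with the simple coroots, and $\rho - (a-1)\theta$ pairs to $1$ with $\check\rho'$ and with each simple coroot), and $\mathcal C$ is a convex cone, so the spanned cone is contained in $\mathcal C$. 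The real content is the reverse inclusion, together with the claim that these rays are \emph{extreme}.

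For the reverse inclusion, the key observation is that the simple coroots $\check\alpha$ together with $\check\rho'$ give $r+1$ linear functionals on $X^*(T)\otimes\R$ (where $r$ is the rank of the semisimple part), and since $G$ has $1$-dimensional center, $\dim X^*(T)\otimes\R = r+1$, so these functionals form a basis of the dual space precisely when $\check\rho'$ is not in the span of the $\check\alpha$'s — which holds because $\check\theta$ has nontrivial pairing with $\theta$ while all $\check\alpha$ pair trivially with $\theta$. Hence every $\lambda$ is determined by the numbers $x_\alpha := \langle\lambda,\check\alpha\rangle - \langle\lambda,\check\rho'\rangle$ and $y := \langle\lambda,\check\rho'\rangle$. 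I would then simply verify that $\lambda = \sum_\alpha x_\alpha(\varpi_\alpha - n_{\check\alpha}\theta) + y(\rho - (a-1)\theta)$ by pairing both sides against the basis $\{\check\alpha\}\cup\{\check\rho'\}$: pairing with $\check\beta$ gives $x_\beta + y$ on both sides (using $\langle\rho,\check\beta\rangle=1$ and $\langle\varpi_\alpha - n_{\check\alpha}\theta,\check\beta\rangle=\delta_{\alpha\beta}$), and pairing with $\check\rho'$ gives $y$ on both sides (using that $\varpi_\alpha - n_{\check\alpha}\theta \perp \check\rho'$ and $\langle\rho-(a-1)\theta,\check\rho'\rangle = 1$, the latter from $\langle\rho,\check\rho'\rangle = \langle\rho,\check\rho\rangle + \langle\rho,\check\theta\rangle = a + (a-1)$... — here I should double-check the bookkeeping, see below). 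Since $\lambda\in\mathcal C$ forces $y\ge 0$ and $x_\alpha\ge 0$, this exhibits $\lambda$ in the spanned cone.

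For extremality of the rays: the cone $\mathcal C$ is cut out by exactly $r+1$ inequalities ($\langle\lambda,\check\rho'\rangle\ge 0$ and $\langle\lambda,\check\alpha - \check\rho'\rangle\ge 0$ for each $\alpha$) in an $(r+1)$-dimensional space, and in the coordinates $(x_\alpha, y)$ it is literally the positive orthant $\{x_\alpha\ge 0, y\ge 0\}$; its extreme rays are the coordinate axes, which are exactly the images of the listed vectors (the $\alpha$-th axis is $\R_{\ge0}(\varpi_\alpha - n_{\check\alpha}\theta)$, the last is $\R_{\ge0}(\rho-(a-1)\theta)$). So the statement follows immediately once the change of coordinates is established.

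The main obstacle — really the only thing requiring care — is the sign and constant bookkeeping: confirming that $\check\rho'$ is genuinely independent from the $\check\alpha$ (this is where the hypothesis $\dim Z(G)=1$ and the choice of $\check\theta$ with $\langle\theta,\check\theta\rangle=1$ enter), and checking the normalization $\langle\rho - (a-1)\theta,\check\rho'\rangle = 1$ and $\langle\varpi_\alpha - n_{\check\alpha}\theta,\check\rho'\rangle = 0$ exactly rather than up to some spurious multiple, since $\check\rho'$ may only be defined after passing to a finite cover. I would handle this by working rationally in $X_*(T)\otimes\Q$ throughout, where $\check\rho' = \check\theta + \check\rho$ is well-defined, and noting that the relevant cone and its description are both scaling-invariant so the finite-cover ambiguity is harmless. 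Also worth a remark: one should note $\langle\varpi_\alpha,\check\theta\rangle$ was defined to be $0$, which is what makes $\varpi_\alpha - n_{\check\alpha}\theta$ land in the right place, and one uses $\langle\theta,\check\alpha\rangle = 0$ together with $\langle\theta,\check\rho'\rangle = \langle\theta,\check\theta\rangle = 1$; I'd state these small identities explicitly before assembling the linear-algebra argument.
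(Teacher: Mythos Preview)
Your proposal is correct and follows essentially the same route as the paper's proof: show the listed vectors form a basis of $X^*(T)\otimes\R$, express an arbitrary $\lambda$ in this basis, and read off that the coefficients $y=\langle\lambda,\check\rho'\rangle$ and $x_\alpha=\langle\lambda,\check\alpha\rangle-y$ are nonnegative precisely by the relevance inequalities. Your parenthetical slip $\langle\rho,\check\rho'\rangle = a+(a-1)$ should read $a+0=a$ (since $\rho$, being a sum of roots, pairs trivially with the central $\check\theta$), but you flagged this yourself and the identity $\langle\rho-(a-1)\theta,\check\rho'\rangle=1$ is already recorded in the text preceding the lemma; your added remark that in the $(x_\alpha,y)$ coordinates the cone is literally the positive orthant, hence the listed rays are extreme, is a nice explicit touch that the paper leaves implicit.
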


\proof First of all, note that $\{\varpi_\alpha-n_{\check{\alpha}} \theta: \alpha \text{ simple root}\} \cup \{\rho-(a-1)\theta\}$ forms a basis of $X^*(T) \otimes \R$; indeed, the set $\{\varpi_\alpha - n_{\check{\alpha}} \theta: \alpha \text{ simple root}\}$ is evidently linearly independent, and as noted previously we have
\begin{equation} \label{rrr}\langle\varpi_\alpha - n_{\check{\alpha}}\theta , \check{\rho}'\rangle = 0 \text{ while } \langle \rho- (a-1)\theta, \check{\rho}' \rangle = 1\end{equation}
so the whole set is linearly independent of size equal to $\mathrm{dim}(X^*(T) \otimes \R)$. Thus, for any $\lambda \in X^*(T)$ we can write
$$ \lambda=\sum_{\alpha \textrm{ simple}} x_{\alpha}  \left(  \varpi_{\alpha} - n_{\check{\alpha}} \theta \right) + y (\rho - (a-1) \theta)$$
for some uniquely determined real numbers $x_\alpha$ and $y$.
Suppose now $\lambda$ is a relevant dominant weight; our goal is to show that the above numbers $x_\alpha$ and $y$ are nonnegative. We can directly compute
$$y = \langle \lambda, \check{\rho}'\rangle \geq 0$$
by \eqref{relevant}. Next, we set
$$\lambda^* := \lambda - y (\rho-(a-1) \theta) = \sum_{\alpha \text{ simple}} \, x_\alpha(\varpi_\alpha - n_{\check{\alpha}} \theta)$$ 
and compute
$$x_\alpha = \langle \lambda^*, \check{\alpha}\rangle = \langle \lambda, \check{\alpha} \rangle -y \langle \rho-(a-1)\theta, \check{\alpha}\rangle = \langle \lambda, \check{\alpha}\rangle - y \geq 0$$
again by \eqref{relevant}.

\qed

   \subsection{An integral model of $X$}  \label{subsect: intModel}
       Our next task is to
       define an integral model of $X$. We will proceed rather crudely -- we do not try to make a ``best'' model
       but just make a class of them (in our examples of interest, there is probably a ``best''
       one).

       We will prove in sequence the following statements,
       working over $k=\Q$.
 Note that the inclusion $\Q[X] \rightarrow \Q[G]$ gives a map $G \rightarrow X$, 
which factors through an open immersion
of $S \backslash G$ into $X$.

\begin{itemize}
\item[(i)] $X_{\Q}$ is normal.

\item[(ii)] The open orbit inside $X_{\Q}$ has codimension $\geq 2$. 

\item[(iii)] Let $I$ be any finite subset of relevant weights that generate all relevant weights under addition. For each $i \in I$ fix a nonzero vector $v \in V_i^S$ (notation as in Proposition \ref{Xfunction}: $V_i$
the representation with highest weight $i$). 
Then the orbit map  of $v_I := (v_i)_{i \in I}$, i.e.
$g \mapsto g^{-1} v_I$, which defines a morphism
$$ G \rightarrow  V_I := \bigoplus_{i \in I} V_i$$
and thereby a map from $G/S$ to the orbit $G.v_I$ of $v_I$, in fact
extends to an isomorphism of $X_{\Q}$ with the closure of $G. v_I$ inside $V_I$.

 \item[(iv)]  Fix $I$ as in (iii) and fix an  integral model for $V_I$.
 For concreteness, for each $V_{\lambda}$ with $\lambda \in I$, we take  the
 integral
lattice inside the $\Q$-representation with highest weight $i$
defined by the $\Z$-analogue of \eqref{fdef}.\footnote{The exact choice does not matter, as we permit
ourselves in the end to discard finitely many primes.}
Set
$$ \mathcal{X} := \mbox{ Zariski closure of $X_{\Q}$ inside this integral model},
$$
where $X_{\Q}$
 is identified to a subvariety of  $V_I$ as in (iii).  
 Then for almost all $p$ the orbit map
 of $v_I$, i.e. the map $(S \backslash G)_{\mathbf{F}_p} \rightarrow \mathcal{X}/p$
 induced by $g \mapsto g v_I$, 
  induces an isomorphism of the affine closure $X_{\F_p}$ of $(S \backslash G)_{\mathbf{F}_p} $, with $\mathcal{X}/p$.
\end{itemize}

 \begin{rem}
 In fact, the only property of $S$ used is that it is a multiplicity one $\Q$-subgroup;
 the above facts go through for a variety of the form $\spec \QQ[G]^S$.
 \end{rem}

We remark that (iv) will not be used, in particular the large residual characteristic assumption will not need to be imposed, until the automorphic calculation in \S \ref{sect: Unfolding}: indeed, the main integrality result of this section (Proposition \ref{intpointsX}) has no characteristic assumptions. However, Proposition \ref{intpointsX} gives a criterion for integrality only for
points of $X$ of a certain type, and  the characteristic assumption comes in later to ensure us that these special points are the only ones that enter into the automorphic calculation.
 \proof 
For the proof of (i) we note that the inclusion $\Q[X] \rightarrow \Q[G]$
induces an inclusion of function fields $\Q(X) \hookrightarrow \Q(G)$. 
An element $f \in \Q(X)$, integral over $\Q[X]$ is {\em a fortiori}
integral over $\Q[G]$, and so belongs to $\Q[G]$; being invariant,
it also belongs to $\Q[X]$. 
 
(ii) follows  via (i) and the following fact in commutative algebra:
let $A$ be a normal domain, 
 and $Z \subset \Spec(A)$ an irreducible closed subset of codimension $1$;
then there are regular functions on $\Spec(A)-Z$ that do not extend to $\Spec(A)$.
We apply the fact with $A=\Q[G]^S$ the ring of functions on $X$,  and $Z$  any irreducible
component of complement to the open $G$-orbit;
if $Z$ had codimension $1$ then there would be regular functions on the open $G$-orbit
that do not extend to $X_{\Q}$, but all regular functions on the open $G$-orbit 
give $S$-invariant functions on $G$ which, by definition of $X$, in fact do extend.

 To check the fact of commutative algebra,   we may replace $Z$ by an irreducible component, and therefore will assume that $Z$ is the subset defined by a single height one prime ideal $\mathfrak{p}$. 
Then $A_{\mathfrak{p}}$ is a discrete valuation ring, and so there exists $z$ in the quotient field
of $A_{\mathfrak{p}}$ with
$z \mathfrak{p} A_{\mathfrak{p}} = A_{\mathfrak{p}}.$
This means, in particular, that $z \mathfrak{p} \subset A \sigma^{-1}$
for some $\sigma \notin \mathfrak{p}$; thus
$$ (z \sigma) \mathfrak{p} \subset A.$$
The element $(z \sigma)$ is an element of the quotient field of $A$;
it doesn't lie in $A$ itself, for otherwise 
we would have $z \in A_{\mathfrak{p}}$. The above inclusion implies that 
  it is regular at all points of $\mathrm{Spec}(A)$ not lying in the closed set defined by $\mathfrak{p}$.

We proceed to the proof of (iii). First of all,  we establish the following two easy facts: 
\begin{itemize}
\item[(a)] Denote $\Q[X]_{\lambda}$ the subspace of $\Q[X]$ transforming according
to the $G$-representation with highest weight $\lambda$;  it is at most one-dimensional. Then
$$ \Q[X]_{\lambda} \Q[X]_{\mu} \subset \sum_{\nu \leq \lambda + \mu} \Q[X]_{\nu}$$
where on the right we order relevant $\lambda$ in the standard way (e.g.
order by the pairing with $\rho$), and
\item[(b)]   the map
$\Q[X]_{\lambda}  \otimes \Q[X]_{\mu} \rightarrow \Q[X]_{\lambda+\mu}$
obtaining by projecting the product to the $\lambda+\mu$ factor is actually {\em surjective}.
\end{itemize}
 Here, (a) follows from the similar assertion concerning decomposition of $G$-representations under tensor product, 
 and (b) follows from the fact that, with reference
to the unique-up-to-scalars nontrivial map $V_{\lambda} \otimes V_{\mu} \rightarrow V_{\lambda+\mu}$,
the image of $V_{\lambda}^S \otimes V_{\mu}^S$ is a {\em nonzero} $S$-invariant vector;
indeed, this unique map is simply multiplication in the Borel-Weil model,
and thus no pure tensor lies in its kernel. 

Continuing to the proof of (iii): 
By definition $S$ is a subgroup of $\mathrm{Stab}_G(v_I)$; on the other hand, since $I$ generates the relevant weights, by fact (b) we see that all $S$-invariant functions on $G$
are actually invariant under
 $\mathrm{Stab}_G(v_I)$; therefore $S = \mathrm{Stab}_G(v_I)$. 
 The induced map $\Q[\overline{G. v_I}] \rightarrow
\Q[G . v_I] \simeq  \Q[X]$ 
is injective because $G. v_I $ is dense in $\overline{G. v_I}$. 

Write   
$\Q[X] = \bigoplus \Q[X]_{\lambda},$
where the sum is taken over relevant weights and $\Q[X]_{\lambda}$
is the $\lambda$-summand as in \eqref{Xiso};
then the image of $\Q[\overline{G.v_I}]$
inside $\Q[X]$ contains at least those summands $\Q[X]_{\lambda}$
for $\lambda \in I$. 
 Using this, (iii) follows readily from 
an induction on weights and the facts (a) and (b).

Finally for the proof of (iv) we let $\mathcal{G}$ be the (Chevalley) integral model of $G$
over $\Z$; after inverting some primes we may suppose that
the chosen integral structure on $V_I$ is stable by $\mathcal{G}$. 
The image of the orbit map $\mathcal{G} \rightarrow \mathcal{X}$
has constructible complement and its codimension, intersected
with the generic fiber, is $\geq 2$ by point (ii). It follows from
the semicontinuity of fiber dimension 
that, with the exception of finitely many primes, the induced map
$\mathcal{G}/p \rightarrow \mathcal{X}/p$ also has image of codimension $\geq 2$.
Now, the stabilizer of the image of $v_I$ inside $\mathcal{X}/p$
is precisely $S_{\F_p}$ for almost all primes $p$. 
Consequently, the map $\mathcal{G}/p \rightarrow \mathcal{X}/p$ 
identifies $(S \backslash G)_{\F_p}$
with an open subset of the affine $\mathcal{X}_{\F_p}$
whose complement has codimension $2$. Therefore, $\mathcal{X}_{\F_p}$
is the affine closure of $(S \backslash G)_{\F_p}$, as claimed. 
\qed

The class of models constructed above has the following property:
It is constructed as a closed subscheme of a certain affine space over $\Z$,
and corresponding the linear functionals on this affine space 
 generate the ring of functions $\Z[\mathcal{X}]$. 
  This translates  to the following statement: the ring $\Z[\mathcal{X}]$ is generated by the image of

\begin{equation} \label{Imap} \bigoplus_{\lambda \in I}  \mathsf{V}_{\lambda}^{S} \otimes \mathsf{V}_{\lambda}^* 
\rightarrow \Z[\mathcal{X}],\end{equation} 
under the map of
\eqref{Xiso}. 
Here, as in (iii), $I$ is an arbitrary set of relevant weights that generate the lattice points in the relevant cone;
$\mathsf{V}_{\lambda}$ is the integral form of the $\lambda$-highest weight representation chosen in (iv), 
 and we take $\mathsf{V}_{\lambda}^*$ to be its $\Z$-linear dual inside the dual representation.

 Having defined the integral model $\mathcal{X}$, we will now prove a criterion for integrality:

\begin{prop} \label{intpoints}
Take $k=F_v$ a nonarchimedean local field. 

\begin{quote}
Let $g \in  B(F_v)$; then  $S g \in X(F_v)$
is integral, with respect to the integral model just defined, if and only if
for each relevant weight $\lambda$ 
\begin{equation} \label{Plambdaint}   u \in U \mapsto   \tilde{P}_{\lambda}(wug^{-1})\end{equation}
{\em a priori} in $F_v[U]$, lies in fact 
within $\mathcal{O}_v[U]$.   Here
  $w$ is any integral representative for the long Weyl element, and 
 $\tilde{P}_{\lambda} \in \Q[G]$ is as  in \eqref{Plambdadef},
uniquely characterized by $\tilde{P}_{\lambda}(bwu) =  b^{w \lambda} \bar{u}^{\langle \check{\rho}', \lambda \rangle}$.
\end{quote}

\end{prop}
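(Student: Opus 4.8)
The plan is to reduce the integrality criterion for a point $Sg \in X(F_v)$ to the integrality of its "coordinates" under the embedding $X \hookrightarrow V_I$ furnished by item (iii) of the integral-model construction, and then to compute those coordinates explicitly using the functions $\tilde{P}_\lambda$ of \eqref{Plambdadef}. Recall from the discussion around \eqref{Imap} that $\mathcal{O}_v[\mathcal{X}]$ is generated by the image of $\bigoplus_{\lambda\in I}\mathsf{V}_\lambda^S\otimes\mathsf{V}_\lambda^*$ under \eqref{Xiso}, so a point of $X(F_v)$ is integral if and only if every function in this image takes a value in $\mathcal{O}_v$ on it. Thus it suffices to (a) reduce from all $\lambda$ (or all $\lambda\in I$) to the single generator $v_\lambda\in V_\lambda^S$ paired against an integral basis of $V_\lambda^*$, and (b) identify the resulting matrix coefficients, evaluated at $g$, with the reduction mod nothing of the polynomial $u\mapsto\tilde P_\lambda(wug^{-1})$.

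First I would set up step (b), which is the heart of the matter. By \eqref{fdef} the representation $\mathsf V_\lambda$ is realized inside $\ZZ[G]$ as functions $f$ with $f(bg)=w\lambda(b)f(g)$, and under this model the $S$-invariant line $\mathsf V_\lambda^S$ is spanned by $\tilde P_\lambda$ (this is exactly the last assertion of Proposition \ref{Xfunction}). The pairing of $\tilde P_\lambda\in\mathsf V_\lambda^S$ with the evaluation-at-$h$ functional $\mathrm{ev}_h\in\mathsf V_\lambda^*$ — for $h$ ranging over a set of representatives generating $\mathsf V_\lambda^*$ integrally — gives, as a function of the point $Sg\in X$, precisely the matrix coefficient $g\mapsto\tilde P_\lambda(hg)$. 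Since every element of $X(F_v)$ lies in a $U$-orbit on the big cell $BwU$ up to the $B(F_v)$-action, and since we are told $g\in B(F_v)$, evaluating at points $h=wu$ with $u\in U(\mathcal O_v)$ (together with translates, using that $w$ and $U(\mathcal O_v)$ are integral) suffices to detect integrality: the functionals $\mathrm{ev}_{wu}$, $u\in U$, span $\mathsf V_\lambda^*$ because the big cell is dense. Plugging $h=wu$ in and using $B(F_v)$-equivariance to move $g$ to the other side, the matrix coefficient becomes $u\mapsto \tilde P_\lambda(wu g^{-1})$ up to a factor that is a unit power of $w\lambda$ evaluated on the torus part of $g$ — one must track that this normalization factor is indeed a $v$-adic unit, or absorb it, which is where a little care is needed.

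Next, step (a): reducing from the infinite generating set of relevant weights to the criterion "for each relevant weight $\lambda$." Here I would invoke facts (a) and (b) from the proof of item (iii): the multiplication map $\QQ[X]_\lambda\otimes\QQ[X]_\mu\to\QQ[X]_{\lambda+\mu}$ is surjective (fact (b)), so integrality of the generators indexed by a generating set $I$ forces integrality of all $\QQ[X]_\nu$; conversely the $\tilde P_\lambda$ for $\lambda$ ranging over all relevant weights certainly include those for $\lambda\in I$. One subtlety is that "generates integrally" requires checking that the pure tensor $v_\lambda\otimes v_\mu$ maps to $v_{\lambda+\mu}$ up to a $v$-adic unit (not merely nonzero), which for almost all $v$ is automatic but in general needs the Borel–Weil multiplicative model to have integral structure constants up to units; after discarding finitely many primes — consistent with the conventions of \S\ref{subsect: intModel} — this is fine, and in fact the statement as phrased ("for each relevant weight $\lambda$") sidesteps this by testing all of them.

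\textbf{Main obstacle.} I expect the principal difficulty to be bookkeeping rather than conceptual: precisely matching the integral lattice $\mathsf V_\lambda$ of \eqref{fdef} with the span of the evaluation functionals $\mathrm{ev}_{wu}$ ($u\in U(\mathcal O_v)$) and the monomial expansion of $\tilde P_\lambda(wu)=\bar u^{\langle\check\rho',\lambda\rangle}$ from \eqref{Plambdadef}, so that "the polynomial $u\mapsto\tilde P_\lambda(wug^{-1})$ has $\mathcal O_v$-coefficients" is literally equivalent to "all integral matrix coefficients of $Sg$ lie in $\mathcal O_v$." Concretely: one must verify that restriction to the big cell identifies $\mathsf V_\lambda^*$ (integrally, up to finitely many bad $v$) with the $\mathcal O_v$-span of the coefficient functionals of the polynomial $u\mapsto\tilde P_\lambda(wu)$, i.e. that no denominators are introduced when passing between the "function on $G$" model and the "polynomial on $\mathbf G_a$" model of the $U^0$-coinvariants used in the proof of Proposition \ref{Xfunction}. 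This is the place where the $\SL_2$-reduction computation and the explicit formula $f_P\big(\begin{bmatrix}x&y\\z&w\end{bmatrix}\big)=z^{d-n}w^n$ from that proof get re-used to control integrality of coefficients.
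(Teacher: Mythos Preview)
Your overall architecture is correct and matches the paper's: reduce via \eqref{Imap} to the condition $g^{-1}\mathsf{V}_\lambda^S \subset \mathsf{V}_\lambda \otimes \mathcal{O}_v$, identify $\mathsf{V}_\lambda^S$ with the line through $\tilde P_\lambda$, and read this as a condition on the right translate $h\mapsto\tilde P_\lambda(hg^{-1})$ restricted to the big cell $BwU$. Necessity is then immediate.

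Where you go wrong is in the sufficiency direction, and you have misdiagnosed the difficulty. You want the evaluation functionals $\mathrm{ev}_{wu}$ for $u\in U(\mathcal O_v)$ to span the \emph{integral} lattice $\mathsf V_\lambda^*$, and you call this ``bookkeeping.'' It is not: density of the big cell gives spanning over $F_v$, but the integral statement---that a function in $\mathsf V_\lambda\otimes F_v$ which is $\mathcal O_v$-valued on $wU(\mathcal O_v)$ must lie in $\mathsf V_\lambda\otimes\mathcal O_v$---is real content. The paper supplies it with an algebraic Hartogs argument: the translated function $\tilde P_\lambda(\,\cdot\,g^{-1})$ is, by hypothesis, a regular section on the open cell $\mathcal B w\mathcal U\subset\mathcal G$ over $\mathcal O_v$; it is also regular on the entire generic fiber $G_{F_v}$, since $\tilde P_\lambda$ was regular on $G$ to begin with and right translation preserves this. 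The complement of the union of these two opens inside the smooth $\mathcal O_v$-scheme $\mathcal G$ is the special-fiber trace of the smaller Bruhat cells, hence has codimension $\geq 2$, and normality extends the section to all of $\mathcal G$. Your suggested route through the $\SL_2$ formula $z^{d-n}w^n$ is a red herring: that computation lives in the proof of Proposition \ref{Xfunction}, where it was a field-level calculation identifying the image of $V_\lambda^{U^0}$; redoing it integrally, cell by cell, would amount to reproving Hartogs by hand.

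A minor point: your step (a) is tangled. The proposition's criterion quantifies over \emph{all} relevant $\lambda$, so for sufficiency it is more than enough to use $\lambda\in I$ (which is what \eqref{Imap} gives), and for necessity one argues directly for each $\lambda$; the multiplicativity facts (a),(b) from \S\ref{subsect: intModel} are not needed here.
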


 \proof   
 In the statement and in what follows, where we refer to $\mathcal{O}_v[U]$, we refer to the natural integral
model arising from taking the Zariski closure of $U$ inside the split form
$\mathcal{G}$ of $G$ over $\mathcal{O}_v$.

 By definition $x:=S g \in X(F_v)$ is integral, i.e. belongs to $\mathcal{X}(\mathcal{O}_v)$,  if and only if
 $f(x) \in \mathcal{O}_v$ whenever $f$ belongs
 to the ring of regular functions on the affine scheme $\mathcal{X}$.
Taking into account \eqref{Imap}, we see that, in order to show that
$Sg \in X(F_v)$ is integral,   it is sufficient to show that $f_{\lambda}(x) \in \mathcal{O}_v$
whenever $f_{\lambda}$ lies in the image of $\mathsf{V}^{S}_{\lambda} \otimes \mathsf{V}^*_{\lambda}$,
and $\lambda \in I$. 
By definition of the integral structure on $\mathsf{V}_{\lambda}^*$, this is  equivalent to asking that
\begin{equation} \label{inclusion} g^{-1} \mathsf{V}_{\lambda }^S \subset \mathsf{V}_{\lambda}\otimes \mathcal{O}_v.\end{equation}
We proved in \eqref{Plambdadef} that $V_{\lambda}^{S}$ corresponds, 
under \eqref{fdef}, to the $F_v$-linear span of the single function
$\tilde{P}_{\lambda}$. 

One readily sees,  then, that this function generates
the integral structure $\mathsf{V}_{\lambda}^{S}$.

The validity of \eqref{Plambdadef} is equivalent
to the right translate of this function through $g^{-1}$
remaining an integral section of the relevant line bundle
on the flag variety. Said  a little more explicitly:  {\em a priori} this right translate gives a  regular function on
$G_{F_v}$, which is given on $BwU$ by the rule
\begin{equation} \label{oinker}  b w u  \mapsto   \tilde{P}_{\lambda}(wug^{-1})\end{equation} 
 and we want it to extend to the split model of $G$ over $\mathcal{O}_v$. 

The condition stated in the proposition is then clearly
necessary for \eqref{inclusion}.
For sufficiency, assume that
\eqref{Plambdaint} holds; we will essentially apply ``Hartog's theorem.''
Let $\mathcal{G}$ be the smooth reductive group scheme of type $G$ over $\mathcal{O}_v$.  
\eqref{oinker} defines also a function (a section of the structure sheaf)  on $\mathcal{B} \times \mathcal{U}$,
where we use calligraphic font to denote the corresponding smooth group schemes over $\mathcal{O}_v$.
The morphism $(b,u) \mapsto bwu $ is an open immersion
$\mathcal{B} \times \mathcal{U} \rightarrow \mathcal{G}$
and thereby \eqref{oinker} defines a function
on its image, which we shall call the open cell in $\mathcal{G}$. 
The resulting section extends, moreover, to the union
of the open cell with the generic fiber of $\mathcal{G}$. 

In other words, we are given a scheme $\mathcal{G}$, an open set $V \subset \mathcal{G}$ whose codimension is $\geq 2$,
and a section of $\mathcal{O}$ over $V$. 
The algebraic form of Hartog's theorem asserts that the map
$\Gamma(\mathcal{G}, \mathcal{O}) \rightarrow \Gamma(V, \mathcal{O})$
is bijective.\footnote{ In detail, we choose a point $x$ in the complement of $V$ and let $R$ be the local ring
of the scheme $\mathcal{G}$ at $x$. 
The function on $V$ corresponds to an element of the fraction field of $R$
which belongs to every localization at a height one prime ideal, which therefore belongs to $R$,
i.e. extends over $x$.} This implies the sufficiency of the condition stated in the Proposition. 
\qed

 We can now prove the main result of this section, a classification of integral points: 
 
 \begin{prop} \label{intpointsX}
 Let $F_v$ be a nonarchimedean local field. Let   $ \chi \in X_*(A)$.

 Write $a=\langle \rho, \check{\rho} \rangle$.
 Then $$S u_t \varpi^{\chi} \in X(F_v)$$
 where $u_t$ is an arbitrary preimage of $t$ under $U \mapsto \mathbf{G}_a$,  is integral with respect
 to the integral model of
\S \ref{subsect: intModel}
  if and only if: 
\begin{itemize}
\item[(a)]
$\chi \in X_*(A)$ has nonegative pairing with each of
$ - \varpi_{\alpha} + n_{\check{\alpha}} \theta$ and
$  (a-1) \theta + \alpha-\rho$, where $\alpha$ varies over simple roots. 
 
\item[(b)]  The valuation of $t$ is bounded below by $  \langle \chi, \rho - (a-1) \theta \rangle$.
\end{itemize}
\end{prop}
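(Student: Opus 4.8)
The plan is to specialize the integrality criterion of Proposition~\ref{intpoints} to the group element $g = u_t\varpi^\chi \in B(F_v)$ and to rewrite the resulting conditions on the functions $\tilde P_\lambda$ as the linear inequalities (a), (b). Note first that replacing $u_t$ by another preimage of $t$ multiplies $g$ on the left by an element of $U^0\subset S$, so the point $Su_t\varpi^\chi$ is well defined; only the value $\Psi(u_t)=t$ will enter.

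The first step is to compute the test function $u\mapsto\tilde P_\lambda(wug^{-1})$ of Proposition~\ref{intpoints} in closed form. Writing $g^{-1}=\varpi^{-\chi}u_t^{-1}$, moving $\varpi^{-\chi}$ leftward past $u$ and then past $w$ gives $wug^{-1}=\varpi^{-w\chi}\cdot w\cdot u''$ with $u''=(\varpi^{\chi}u\varpi^{-\chi})u_t^{-1}\in U$; this always lies in the big cell $BwB$, so \eqref{Plambdadef} applies. Using that $w$ preserves the pairing $X^*(T)\times X_*(T)\to\Z$, and that $\Psi\colon U\to\Ga$ is the sum of the simple root coordinates $s_\alpha$, which $\mathrm{Ad}(\varpi^\chi)$ rescales by $\varpi^{\langle\alpha,\chi\rangle}$, one obtains
\[
\tilde P_\lambda(wug^{-1})\;=\;\varpi^{-\langle\lambda,\chi\rangle}\Bigl(\sum_{\alpha\ \mathrm{simple}}\varpi^{\langle\alpha,\chi\rangle}s_\alpha(u)\;-\;t\Bigr)^{\langle\check{\rho}',\lambda\rangle},
\]
a polynomial in the $s_\alpha(u)$ (note $\langle\check{\rho}',\lambda\rangle\ge 0$ for relevant $\lambda$ by \eqref{relevant}). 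Expanding by the multinomial theorem, the coefficient of $\prod_\alpha s_\alpha(u)^{k_\alpha}$ has valuation $v(\text{multinomial coeff})+\bigl(-\langle\lambda,\chi\rangle+\sum_\alpha k_\alpha\langle\alpha,\chi\rangle+k_0\,v(t)\bigr)$ with $k_0=\langle\check{\rho}',\lambda\rangle-\sum_\alpha k_\alpha\ge 0$; since the multinomial coefficient is an integer, integrality of $\tilde P_\lambda(wug^{-1})$ for a given $\lambda$ is implied by the bracketed quantity being $\ge 0$ on the whole simplex $\{k_\bullet\ge 0,\ \sum_\alpha k_\alpha+k_0=\langle\check{\rho}',\lambda\rangle\}$.

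Now I would test this against the extremal ray generators of the relevant cone from the Lemma of \S\ref{relevant section}. For $\lambda$ a positive integral multiple of $\varpi_\alpha-n_{\check{\alpha}}\theta$ lying in $X^*(T)$ (so $\langle\check{\rho}',\lambda\rangle=0$ by \eqref{rrr}) the test function is the constant $\varpi^{-\langle\lambda,\chi\rangle}$, integral iff $\langle-\varpi_\alpha+n_{\check{\alpha}}\theta,\chi\rangle\ge 0$; for $\lambda$ a positive integral multiple of $\rho-(a-1)\theta$ the monomials $s_\alpha(u)^{\langle\check{\rho}',\lambda\rangle}$ and the constant term have multinomial coefficient $1$, so integrality forces $\langle(a-1)\theta+\alpha-\rho,\chi\rangle\ge 0$ and $v(t)\ge\langle\chi,\rho-(a-1)\theta\rangle$. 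This gives the ``only if'' direction and identifies the inequalities as exactly (a) and (b). For ``if'', I would check that (a), (b) suffice for \emph{every} relevant $\lambda$: writing $\lambda=\sum_\alpha x_\alpha(\varpi_\alpha-n_{\check{\alpha}}\theta)+y(\rho-(a-1)\theta)$ with $x_\alpha,y\ge 0$ (the Lemma again), one has $\langle\check{\rho}',\lambda\rangle=y$ and, by the first part of (a), $\langle\lambda,\chi\rangle\le y\langle\rho-(a-1)\theta,\chi\rangle$; the bracketed affine functional above is then minimized on the simplex at a vertex, where after applying that estimate it is bounded below by either $y\langle(a-1)\theta+\beta-\rho,\chi\rangle$ or $y\bigl(v(t)-\langle\rho-(a-1)\theta,\chi\rangle\bigr)$, and both are $\ge 0$ by the second part of (a) and by (b). Proposition~\ref{intpoints} then concludes.

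The step I expect to be the main obstacle is the closed-form Bruhat cell computation: keeping the conjugation signs straight, tracking how $\Psi$ behaves under $\mathrm{Ad}(\varpi^\chi)$, and pinning down the precise character $w\lambda$ of $T$ and the unipotent argument $u''$ that are fed into \eqref{Plambdadef} --- a sign slip there would propagate directly into (a), (b). The remainder, expanding a power of a linear form and minimizing an affine functional on a simplex, is routine once the extremal ray description of the relevant cone is in hand.
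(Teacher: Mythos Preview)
Your proposal is correct and follows essentially the same route as the paper's proof: apply Proposition~\ref{intpoints}, compute $\tilde P_\lambda(wug^{-1})$ in the Bruhat cell to get the explicit polynomial $\varpi^{-\langle\lambda,\chi\rangle}\bigl(\sum_\alpha\varpi^{\langle\alpha,\chi\rangle}s_\alpha(u)-t\bigr)^{\langle\check\rho',\lambda\rangle}$, expand by the multinomial formula, and reduce to the extremal ray generators \eqref{extremal relevant} of the relevant cone. The only organizational difference is that the paper packages the two directions together---observing at once that integrality of the polynomial for fixed $\lambda$ is \emph{equivalent} to integrality of the vertex monomials $x_\alpha^d$ and $t^d$ (multinomial coefficient $1$), yielding the linear inequalities \eqref{imp} as an ``iff'' in $\lambda$, and then noting that a homogeneous linear inequality holds on all lattice points of the cone iff it holds on the extremal rays---whereas you split the argument into a direct ``only if'' test at multiples of the extremal generators and a separate ``if'' verification via the convex decomposition of $\lambda$; both are valid and arrive at the same conditions (a), (b).
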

 
For later use, we note that, as long as $a > 1$,
which excludes   $\SL_2$ and $\SL_2^2$,  we can summarize these conditions thus:
 \begin{equation} \label{kineq} \langle \chi, \theta \rangle  \mbox{is $\geq$ than both }    \langle \chi,  \frac{\rho-\alpha}{a-1} \rangle \mbox{ and } \langle \chi, n_{\check{\alpha}}^{-1} \varpi_{\alpha} \rangle \text{ for all simple }\alpha,\end{equation}
\begin{equation} \label{kineq2} v(t) \geq \langle \chi, \rho - (a-1) \theta \rangle. \end{equation}

\proof Applying the criterion of Proposition \ref{intpoints} to $g = u_t \varpi^{\chi}$, 
 we want to check if

$$u \mapsto \tilde{P}_{\lambda}(  w \varpi^{-\chi}  [\Ad(\varpi^{\chi})  u] u_{-t}  ) =  \varpi^{-\langle \lambda,   \chi \rangle} \tilde{P}(  \left[\Ad(\varpi^{\chi}) u  \right] u_{-t})$$
lies in $\mathcal{O}_v[U]$. 
 All that matters in the last expression is the projection of
 $\left[\Ad(\varpi^{\chi})  u \right] u_{-t}$ to $U^0 \backslash U \simeq \mathbf{G}_a$. 
Let $x_{\alpha}$, for $\alpha$ a simple root, be the various coordinates of $u \in U$
in the simple root spaces; then
this projection is given by 
$$ \sum\varpi^{\langle \chi, \alpha \rangle}  x_{\alpha}  - t.$$

We are therefore asking that the following expression, {\em a priori}
in $F_v[x_{\alpha}]$, actually lies inside $\mathcal{O}_v[x_{\alpha}]$:
\begin{equation} \label{BWbound} \varpi^{-\langle \lambda,    \chi \rangle} \left[
 \sum\varpi^{\langle \chi, \alpha \rangle}  x_{\alpha}-t  \right]^d\end{equation}
 where $d =\langle \lambda, \check{\rho}' \rangle$. 
The multinomial formula shows that the polynomial in $x_{\alpha},t$ that appears above is integral
if and only if the extremal coefficinets involving $x_{\alpha}^d$ and $t^d$ are integral.
 The integrality of these extreme coefficients corresponds to the integrality, for each $\alpha$,  of
$$\varpi^{-\langle \lambda,   \chi \rangle} \varpi^{d \langle \alpha, \chi \rangle}, \varpi^{-\langle \lambda,     \chi \rangle}  t^d$$
or formulated in terms of valuations, with the substitution $d =\langle \lambda, \check{\rho}' \rangle$. 
\begin{equation} \label{imp}   - \langle  \lambda,   \chi \rangle + \langle \lambda, \check{\rho}' \rangle  \langle \alpha, \chi \rangle \geq 0 \mbox{ and }
  - \langle  \lambda,   \chi \rangle  + \langle \lambda, \check{\rho}' \rangle v(t) \geq 0.\end{equation}
We want these linear inequalities to be valid for all characters $\lambda$ that lie in the relevant cone \eqref{relevant}.
Now, a linear inequality of the type \eqref{imp} is valid on all integral points in a homogeneous cone 
if and only if it is valid at all real points in the cone, for positive rescalings of integral points are dense;
 and, further, that is  the case if and only if it is
valid for each element in a set of generators for extremal rays. 
Consequently, it suffices to 
impose the conditions  \eqref{imp} above
solely at the set of extremal vectors prescribed in \eqref{basic0}.

For $\lambda = \varpi_\alpha - n_{\check{\alpha}} \theta$, so that
$\lambda$ is orthogonal to $\check{\rho}'$,  the inequalities coincide and become
$ \langle  - \varpi_{\alpha} + n_{\check{\alpha}} \theta,   \chi \rangle \geq 0.$
Moreover, these extreme weights impose no constraint on $t$. 
 The remaining weight is $\lambda = \rho - (a-1) \theta$. Recalling that $a = \langle \rho, \check{\rho} \rangle$,
which can be described equivalently as $a=\sum n_{\alpha} = \sum\check{n}_{\check{\alpha}}$, we see that the first inequality places the following constraint on $\chi$:
$ \langle -\lambda + \alpha, \chi \rangle \geq 0 $
 and the second inequality imposes a condition on $t$, namely,
 $$v(t) \geq \langle \rho-(a-1)\theta, \chi \rangle.$$
 
\qed

\section{Geometry of the nilpotent cone for $\mathfrak{pgl}_3$ and for $(2,2,2)$ tensors}
  \label{Examples}
 In the prior section, 
 we have studied the quotient $T^0U^0\backslash G$
 and its affine closure, where $G$ is a group with one-dimensional
 center, and $T^0$ is a one-dimensional torus. There are two examples of particular interest to us, one related to the adjoint $L$-function of $\mathrm{SL}_3$, and the other related to the triple product $L$-function of $\mathrm{SL}_2$.
 In these examples,  the space $X$ admits an explicit description
 in terms of the geometry of certain nilpotent cones, which we are going to recall.

 By studying these nilpotent cones with an explicit model, we observe that the integrality conditions obtained in Proposition \ref{intpointsX} could have been computed directly;
 actually, in our examples, the integral model is much simpler to describe in practice than in theory. 
 A special feature of these examples is that, 
at least after switching $G$ with its dual,
the space $X$ is a complete intersection in a certain irreducible $G$-representation with highest 
weight a central translate of $\rho$.

\subsection{Ginzburg's example, $G = \PGL_3 \times \Gm$. } \label{Ginzburgexample}

Here we take $G=\PGL_3 \times \Gm$; 
the isomorphism $\theta$ is the projection to $\Gm$, and $\check{\theta}$ the inclusion of the central $\Gm$ in the second factor;
we have in the notation of \S \ref{setup0}:
$$\check{\rho}'(t) = \bigg(\begin{bmatrix} t&0&0\\0&1&0\\0&0&t^{-1}\end{bmatrix}, t\bigg) \mbox{ and }  S = \{  \bigg(\begin{bmatrix}  t & x &y \\ 0 & 1 &-x \\ 0 & 0 & t^{-1} \end{bmatrix},t \bigg) \}$$

 Let $\mathfrak{pgl}_3= \mathfrak{sl}_3$ be its Lie algebra, 
 understood as trace-free $3 \times 3$ matrices, and take
\begin{equation} \label{gamma0def} x_0 = \begin{bmatrix} 0&1&0\\0&0&-1\\0&0&0\end{bmatrix} \in \mathfrak{pgl}_3.\end{equation}   Then the rule 
 $(g, z)  \mapsto z \cdot ( \mathrm{Ad}(g)^{-1} \cdot x_0)$
 identifies
 $S \backslash G$ to  an open subset of the nilpotent cone inside $\mathfrak{pgl}_3$, compatibly
 with right actions of $G$ where $\PGL_3$ acts by conjugation and $\Gm$ by scaling on the nilpotent cone.  
This induces an identification of the affine closure $X=\overline{S \backslash G}$ with that nilpotent cone:
 $$X = \mbox{nilpotent cone for $3 \times 3$ matrices} $$
 
 Let us now verify our prior results about integrality, namely Proposition \ref{intpointsX}, by hand (at least in sufficiently large characteristic, so that the integral model used in Proposition \ref{intpointsX}
 coincides with the integral model coming from the embedding into $3 \times 3$ matrices). 
 We use the same notation as in the Proposition.
 Letting $g$ be the product of the upper triangular matrix with $t$ in the $(1,2)$ entry and the cocharacter $(a,b,0;c)$ evaluated on $\varpi$, we calculate
 \begin{equation} \label{papa smurf} g=  \left(  \begin{bmatrix} 1&t&0\\0&1&0\\0&0&1\end{bmatrix}\begin{bmatrix} \varpi^a&0&0\\0&\varpi^b&0\\0&0&1\end{bmatrix} , \varpi^c \right)  \implies
 x_0 g= \begin{bmatrix} 0& \varpi^{c+b-a} & -t\varpi^{c-a}\\ & 0 & -\varpi^{c-b}\\&&0\end{bmatrix}\end{equation} 
 The conditions for integrality are thus $c \geq b$, $c \geq a-b$ and also $v(t) \geq a-c$. 
This is as follows from \eqref{kineq}. Indeed, here $k=c$, 
 the first inequality  of \eqref{kineq} shows that $c$ is larger than
   $\max(a-b,b)$; the  second condition  of \eqref{kineq} follows automatically, 
   for $n_{\alpha}^{-1} \varpi_{\alpha}$ lies in the convex hull of the $\frac{\rho-\alpha}{a-1}$. 
   The condition on $t$ is that $v(t)  \geq a  -c$.

\subsection{The dual to Ginzburg's example, $G=\SL_3 \times \Gm$}\label{Ginzburgexample2}

This is the dual of the previous case and we take again $\theta$ to be the projection to the $\Gm$ factor. 
The same construction as above gives
$$X = \overline{T^0 U^0 \backslash G} \twoheadrightarrow \mbox{nilcone for $\mathfrak{pgl}_3$}.$$
This map is generically 3 to 1, and $X$ is actually nonsingular in codimension $2$;
the only singularities occur at the preimage of the origin.  However, $X$ is not LCI at the origin.

\subsection{Smaller orbits in the Ginzburg examples} \label{smallorbitGinzburg}
The various duality proposals in \cite{BZSV} are quite stringent and entail geometric restrictions on lower-dimensional orbits. 
By restricting the definition of numerical weak duality to cusp forms we do not encounter most of this
rather interesting geometry. 
However, 
 it will be necessary for our later computations
to observe  all the smaller orbits have stabilizers that contain the unipotent radical of a proper parabolic subgroup.

Here, both in the case of \S \ref{Ginzburgexample} and \S \ref{Ginzburgexample2}, there are only two smaller orbits: the closed orbit (the origin), and the orbit of the rank one nilpotent matrix 
$$x_1 = \begin{bmatrix} 0&0&1\\0&0&0\\0&0&0\end{bmatrix}$$
for Ginzburg's example, and the preimage of the orbit of $x_1$ in the generically 3-1 cover for the dual to Ginzburg's example. 

In both cases, the stabilizer of $x_1$ (or a lift of $x_1$, in the dual to Ginzburg's example) is a 5-dimensional subgroup of the form $\Gm^2 \cdot U$, where $U$ is the upper triangular unipotent radical, and the $\Gm^2$ here is the kernel of the dual to $\theta$.

 \subsection{Garrett's example} \label{Garrettexample}

  Here we take
\begin{equation} \label{Garrettdef} G = \frac{\big\{(g_1,g_2,g_3,z) \in \mathrm{GL}_2^3 \times \Gm: z^3 = \mathrm{det}(g_1g_2g_3)\big\}}{\big\{(\lambda, \mu, \nu,1): \lambda \mu \nu = 1\big\}}\end{equation}
 and, again with the notation of \S \ref{setup0}, 
 $$\theta(g;z) = z, \check{\theta}(t) = \bigg(\begin{bmatrix} t^{1/2} & 0\\0&t^{1/2}\end{bmatrix}^{\times 3}, t\bigg) \mbox{ and } T^0 = \mathrm{Im}(\check{\rho}') = \bigg(\begin{bmatrix} t&0\\0&1\end{bmatrix}^{\times 3}, t\bigg).$$

We consider the $G$-representation on $\CC^2 \otimes \CC^2 \otimes \CC^2$ defined by
\begin{align*}
(v_1 \otimes v_2 \otimes v_3) (g_1,g_2,g_3,z) &= z^{-1}\big(v_1g_1 \otimes v_2g_2 \otimes v_3g_3\big)
\end{align*}
which we think of as a right $G \times \Ggr$-space acting on row vectors, with $\Ggr$ acting by   scaling.
Writing $x_0 $ for the symmetrization $e_{122} + e_{212} + e_{221}$ 
of $e_{122} := e_1 \otimes e_2 \otimes e_2$, the rule
$g \mapsto x_0 g$ identifies 
$X$ with the vanishing locus of the unique quartic semiinvariant
on this eight-dimensional space:
$$X = \mbox{ space of nilpotent $(2,2,2)$-tensors.} $$ 
The quartic semi-invariant is given by Cayley's hyperdeterminant.\footnote{
Thinking
of $(2,2,2)$ tensors  as a cube of numbers with side length 2, we take a pair of opposing faces which gives us two $2 \times 2$ matrices $A$ and $B$. Then we consider the binary quadratic form
$$Q(x,y) := \mathrm{det}(Ax+By)$$
whose discriminant $f = \mathrm{disc}(Q)$ is the quartic semiinvariant - this is the Cayley hyperdeterminant.}

Again  let us illustrate our prior results on integrality, with
the same caveats on characteristic as in 
\S \ref{Ginzburgexample}.
 In the notation of the previous section,  $n_{\alpha}=1/2$ and $a=3/2$; 
    for $\chi=(a_1, b_1, a_2, b_2, a_3, b_3; (a+b)/3)$,
the conditions of  \eqref{kineq} becomes\footnote{
The
second condition of \eqref{kineq}
is automatic,  for
  again $n_{\alpha}^{-1} \varpi_{\alpha}$ lies in the convex hull of the $\frac{\rho-\alpha}{a-1}$.}
 $$ \frac{k}{2}  = \frac{a+b}{6} \geq  (a-b)/2 - (a_i-b_i)   \iff  \frac{2b-a}{3}  \geq b_i -a_i  $$
and that $v(t)  \geq \sum (a_i-b_i)/2  -(a+b)/6$.
 Again this coincides with what we see by direct computation: let us use the notation
 $$u(t_1, t_2, t_3) := \left(\begin{bmatrix} 1 &t_1 \\0&1\end{bmatrix} \times \begin{bmatrix} 1 &t_2 \\0&1\end{bmatrix} \times \begin{bmatrix} 1 &t_3 \\0&1\end{bmatrix}, 1 \right)$$  
then the group element $(u(t,0,0)\varpi^{(a,b)}, \varpi^{\frac{a+b}{3}})$ sends our distinguished point $x_0$ to
\begin{equation} \label{exp} \varpi^{-\frac{a+b}{3}}\bigg(\varpi^{b_1+b_2+a_3}e_{221} + \varpi^{b_1+a_2+b_3}e_{212}+\varpi^{a_1+b_2+b_3}e_{122}+ t\varpi^{b_1+b_2+b_3}e_{222}\bigg)\end{equation}

\subsection{The dual to Garrett's example} \label{Garrettdual}
  On the dual side
\begin{equation} \label{Garrettdefdual} G = \frac{\big\{(g_1,g_2,g_3) \in \mathrm{GL}_2^3: \mathrm{det}(g_1) = \mathrm{deg}(g_2) = \mathrm{det}(g_3)\big\}}{\Delta\mu_3} \end{equation}
and here we take
 $$\theta(g) = \det(g_i)^3, \check{\theta}(t) = \begin{bmatrix} t^{1/6} & 0\\0&t^{1/6}\end{bmatrix}^{\times 3}, \check{\rho}'(t) = 
 \begin{bmatrix} t^{2/3} & 0\\0&t^{-1/3}\end{bmatrix}^{\times 3}.$$
 $$ S = \{ \bigg(\begin{bmatrix} t^2 & x\\ 0&t^{-1}\end{bmatrix}, \begin{bmatrix} t^2 & y\\ 0&t^{-1}\end{bmatrix}, \begin{bmatrix} t^2 & z\\ 0&t^{-1}\end{bmatrix} \bigg):x+y+z=0\}.$$

We claim that in this case:
$$ X = \mbox{ affine cone of the Pl\"ucker embedding of $\mathrm{LGr}(3,6)$}$$
where on the right we have the Grassmannian of Lagrangian 3-planes in $\CC^6$, into $\wedge^3 \CC^6$. In other words, every nonzero point of $\check{X}$ can be represented by a $(L,\nu)$, where $L \subset \CC^6$ is a Lagrangian and $\nu \in \wedge^3L$ is a volume form on the dual to $L$.   $G$
acts in the natural way. 
 Indeed, presenting $\C^6$ as the direct sum of $3$ symplectic planes $\langle e_i, f_i \rangle$ for $1 \leq i \leq 3$, and taking
$$x_0 =  (f_1-f_2) \wedge (f_2-f_3) \wedge (e_1+e_2+e_3).$$
 the rule $g \mapsto x_0 \cdot g$ defines an open immersion $T^0U^0 \backslash G \rightarrow X$
extending to an identification of the affine closure with $X$. 

\subsection{Smaller orbits in the Garrett example} \label{smallorbitGarrett}
For the same reasons as in  \S \ref{smallorbitGinzburg} we will analyze the 
smaller $G$-orbits. We will do this only over $\C$ (the results may remain
valid in all characteristics, but we didn't check).

We suggest that the reader skip this section upon a first reading, as only very crude features are needed for our computation. 
In what follows $B$ denotes the upper trianguar Borel subgroup of $\GL_2$
and for $b \in B$ the notation $b_{11}, b_{22}$ denote its upper and lower diagonal entry. 

In the case of \S \ref{Garrettexample}, in addition to the unique closed and open orbits,
there are three $5$-dimensional orbits and one $4$-dimensional orbit:
\begin{itemize}
\item Three five-dimensional orbits: we can take a tensor in 
$\C^2 \otimes \C^2 \otimes \C^2$
to have the form $v \otimes X$, where $v \in \C^2$ and $X \in \C^2 \otimes \C^2$ is generic, and there are three such families depending on which of the three tensor factors $v$ lies in.
For instance, we may take $v = e_1$ in the first tensor factor and $X = e_{12} - e_{21}$ in the second and third tensor factor as a representative point in its orbit, which gives us the point
$$v \otimes X =  e_{112} - e_{121}$$
and its stabilizer is the 6-dimensional subgroup 
$$ \{(b, g, g; z): b \in B, g\in \GL_2, z \in \Gm:
z = b_{11}  \det(g).\}.  $$

\item  A four-dimensional orbit: we can take a tensor of rank one. 
A representative point and its  stabilizer are:
$$e_{111}, \{(b_1, b_2, b_3, z):  \prod_j b_{j, 11} = z\}$$
where the $b_j$ belong to $B$.
In this case we also have $\prod b_{j, 11}^2 \cdot \prod b_{j, 22}^{-1}=1$;
the toral part of the  stabilizer is again the kernel to the dual of $\check{\rho}'$. 
\end{itemize}

We will  now describe the orbit structure of $G$ on $X$ in the situation of
\S \ref{Garrettdual}. In addition to the unique closed and open orbit,
there are three $5$-dimensional orbits, and one $4$-dimensional orbit:

 \begin{itemize}
\item Three five-dimensional orbits: Here the isotropic $3$-space
is the sum of a line in one of the three symplectic planes, and the graph of a
symplectic (anti-)isomorphism between the remaining two. 
A representative point  and stabilizer is given by
$$ e_1 \wedge (e_2-e_3) \wedge (f_2 - f_3),   \{ (b,g,g) \in B \times \Delta \GL_2 :   b_{22} \cdot \det(g) = 1\},$$
where $B$ is the upper triangular Borel. Note that $b_{11} b_{22} = \det(g)$
so we can rewrite the condition as $b_{11} b_{22}^2=1$.

\item  A four-dimensional orbit: the isotropic $3$-space is the sum of a line in each
of the three symplectic planes. A representative point and stabilizer is
\begin{equation} \label{4X}  
e_1 \wedge e_2 \wedge e_3,   \{ g = (b_1, b_2, b_3)  :    \prod b_{j,11}= 1\}. \end{equation}  
Again, the toral part of the stabilizer coincides with the kernel of the dual to $\check{\rho}'$. 
 \end{itemize}

\newcommand{\unlambda}{\underline{\lambda}}
\section{Unfolding to the Whittaker model} \label{sect: Unfolding}

We follow notation as in \S \ref{setup0};
in particular, we have a reductive group $G$ with one-dimensional center,  
and a conical $G$-variety $X = \overline{S \backslash G}$
with $S = T^0U^0$ defined by \eqref{eqn: cone}.
 
\begin{thm} \label{mainthm} Let $(G, \check{G})$ be as in \S \ref{Examples} --
that is to say, either 
\begin{itemize}
\item[(a)] $G=\PGL_3 \times \Gm$ or its Langlands dual $G = \SL_3 \times \Gm$, or
\item[(b)] $G \sim \SL_2^3 \times \Gm$ or its Langlands dual $G \sim \PGL_2^3 \times \Gm$ as in \eqref{Garrettdef};
\end{itemize}
We refer to either group in case (a) as the ``Ginzburg case'' and either group in case (b) as ``Garrett case.''  Consider the $G$-space and $\check{G}$-spaces
  $$X = \overline{T^0U^0 \backslash G} \mbox{ and } \check{X} = \overline{\check{T}^0\check{U}^0\backslash \check{G}}.$$
Set  $\gamma=1$ in the Ginzburg case and $\gamma=2$ in the Garrett case. 
Equip $X$ with the $\Ggr$ scaling action given by $\gamma \check{\theta}$.
 
 Then $X$ and $\check{X}$, regarded as $\Q$-varieties,  define  \wdd
 periods in the sense of Definition \ref{weakly dual def}; explicitly, the $X$-automorphic period matches with the $\check{X}$-spectral period and vice versa up to a power of $\Delta^{1/4}$:  
\begin{equation} \label{conclusion} P_X(f) =  \Delta^{3/4} L_{\check{X}}(\varphi_f), 
P_{\check{X}}(g) =  \Delta^{3/4} L_{X}(\varphi_g).
\end{equation}
In other words, the pair $(X, \check{X})$ has discrepancy $3$. 
 \end{thm}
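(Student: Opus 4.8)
The plan is to establish both identities in \eqref{conclusion} by unfolding the $X$-Poincar\'e series against a Whittaker-normalized cusp form, using the uniform description $X = \overline{T^0U^0\backslash G}$ from \S\ref{setup0}. First I would unwind the definition \eqref{thetaXdef} of $\theta_X$ and substitute into \eqref{PXdef}, writing $P_X(f)$ as an adelic integral $\int_{[G]}\sum_{x\in\mathring X(F)} (g\star\Phi)(x)\,f(g)\,dg$ and collapsing the sum over $\mathring X(F) = (S\backslash G)(F)$ (the complement of the small orbits, which by \S\ref{smallorbitGinzburg} and \S\ref{smallorbitGarrett} all have stabilizer containing a unipotent radical of a proper parabolic, hence contribute nothing against a cusp form). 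This turns $P_X(f)$ into an integral over $[S\backslash G]$ of $\Phi$ against $f$; since $S = T^0U^0$ and $U^0 = \ker(U\to\Ga)$, I would then further unfold the $U^0$-integral and a residual $\Ga$-integral against the additive character $\psi$ to produce the Whittaker function $W_f$. The outcome should be a single local-times-global integral: a product over places $v$ of a local zeta integral $\int W_v^{\mathrm{un}}(t_v)\,\Phi_v(\text{something}(t_v))\,dt_v$, times the global normalizing constants (the $\Delta$-powers in \eqref{thetaXdef}, the Whittaker normalization \eqref{Whitnormnew}, and the $\partial^{1/2}$-twist in \eqref{Phidef}).

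Next I would evaluate the local integrals. Here Proposition~\ref{intpointsX} is the key input: it tells us precisely which points $S\,u_t\varpi^\chi$ are integral, i.e.\ where the local Schwartz function $\Phi^0$ (translated by $\partial^{1/2}$) is supported, in terms of the inequalities \eqref{kineq}--\eqref{kineq2} on $(\chi,v(t))$. Combining this support description with the shifted Casselman--Shalika formula \eqref{shifted CS}, \eqref{topolish} expresses each local integral as a sum of Schur polynomials $s_{\chi+2m_v\check\rho}(A_v)$ over the lattice cone cut out by \eqref{kineq}, weighted by powers of $q_v$. The combinatorial claim I would then prove is that this sum equals exactly the local nonlinear $L$-factor $L(\check X,\varphi_v,s)|_{s=1/2}$ from \eqref{eq: nonlinearLdef}, up to an explicit power of $q_v$; equivalently, that the lattice cone of relevant weights for $\check X$ (via Proposition~\ref{Xfunction} applied to $\check G$) is matched, under the branching/Schur-function identities, with the integrality cone for $X$. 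This is where the ``numerical coincidence'' specific to $\PGL_3$ and $\SL_2^3$ enters: the cone of relevant weights for $\check X$ is $W$-conjugate, after the shift by $2\check\rho$, to the integrality region from Proposition~\ref{intpointsX} for $X$, but only because $a = \langle\rho,\check\rho\rangle$ takes the specific small values ($a=3$ in the Ginzburg case, $a=3/2$ after rescaling in the Garrett case) that make the extremal rays line up.

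The symmetric identity $P_{\check X}(g) = \Delta^{3/4} L_X(\varphi_g)$ follows by running the identical argument with the roles of $G$ and $\check G$ exchanged; the common structure $X = \overline{\Gm U^0\backslash G}$, $\check X = \overline{\Gm\check U^0\backslash\check G}$ makes this literally the same computation with dual root data, which is the whole point of the uniform treatment in \S\ref{setup0}--\S\ref{sect: Unfolding}. The final bookkeeping step is to collect all the powers of $\Delta = q^{2g-2}$: from the normalization factor $\Delta^{(\dim X - \dim G)/4}$ in \eqref{thetaXdef}, the Whittaker normalization $\Delta^{\langle\rho,\check\rho\rangle - \dim(U)/4}$, the measure discrepancy \eqref{meas form}, the factor $|\partial^{1/2}|^{1/2}$, and the $\mathfrak z\,\Delta^{(\varepsilon-\dim\check X)/4}$ on the spectral side, and verify that their sum is $\Delta^{3/4}$, i.e.\ the discrepancy $a_{12}=a_{21}=3$. \textbf{The main obstacle} I anticipate is the local combinatorial identity: one must show that the sum of Schur functions over the integrality cone of Proposition~\ref{intpointsX}, after the Casselman--Shalika shift, reassembles exactly into $\mathrm{tr}(\mathrm{Frob}_v\times q_v^{-1/2}\mid \kk[\check X])$ with no spurious denominators or extra factors beyond the predicted power of $q_v$ --- this is precisely the step that is clean in the smooth (hyperspherical) case of Theorem~\ref{thm: smooth} but, as the introduction warns, becomes genuinely more delicate here because the integral points on the singular $X$ are more complicated (cf.\ the shape of \eqref{kineq}), and it is also where the nonvanishing of the discrepancy (the extra $\Delta^{3/4}$, i.e.\ the factors $a_{12},a_{21}$ of Definition~\ref{weakly dual def}) is forced upon us rather than being something we can normalize away.
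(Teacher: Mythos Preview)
Your proposal is correct and follows essentially the same approach as the paper: unfold to the open orbit (small orbits killed by cuspidality via \S\ref{smallorbitGinzburg}--\S\ref{smallorbitGarrett}), pass to the Whittaker model, use Proposition~\ref{intpointsX} for the local support together with Casselman--Shalika, reindex by the $2\check\rho$-shift and the $\chi\mapsto -w\chi$ passage, and then match the resulting cone of $\chi$'s against the relevant-weight cone of Proposition~\ref{Xfunction} for $\check X$ via the specific duality relation $\check\theta_{\check G}\leftrightarrow (a-1)\theta_G$ of these examples. One minor numerical slip: in the Ginzburg case $a=\langle\rho,\check\rho\rangle=2$, not $3$ (so $\gamma=1/(a-1)=1$ as stated in the theorem).
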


  Lest the reader get lost in the equations, we say the key point in the proof:  {\em the  various inequalities defining the notion of integrality on $X$ match with the set of weights occurring in  functions on $\check{X}$,} and vice versa.  
The proof will be given in the rest of this section; in particular, we carry out the automorphic computation in \S \ref{theorem-autperiod},
and match it with the spectral computation in \S \ref{theorem-specperiod}.

 \subsection{Key features} \label{keyfeatures}

  To uniformize the treatment of the cases we use the notation 
\begin{equation} \label{gimeldef} a = \langle \rho, \check{\rho} \rangle, \mbox{ and }   \gamma = \frac{1}{a-1} \mbox{ and }
 \gimel
 = \frac{(2a-1) (2a-3)}{(2a-2)}\end{equation}

 In the Ginzburg case $a=2,\gamma=1, \gimel =3/2$ and
 the Garrett case $a=3/2, \gamma=2,\gimel=0$. 

Note that the prefactor of discriminant in \eqref{conclusion} is
$$\Delta^{3/4} = q^{2(g-1) \times 3/4} = q^{(3/2)(g-1)}$$   Note that $a>1$ in all cases;
 we use the positivity of $a-1$ implicitly.
 Moreover the following hold, in both cases:  
 \begin{itemize}
\item[(1)] 
  The cocharacter of $\check{G}$ dual to $\theta$ is given by $\gamma$ times $\check{\theta}_{\check{G}}$, and vice versa; that is to say, 
under the duality isomorphisms 
 $$X^*(T) \simeq X_*(\check{T}) \mbox{ and } X^*(\check{T}) \simeq X_*(T)$$
 we have
\begin{equation} \label{warning}  \theta_G \leftrightarrow \gamma \check{\theta}_{\check{G}}, 
\check{\theta}_G \leftrightarrow
  \gamma^{-1}  \theta_{\check{G}}. \end{equation}
   For example, consider the Garrett case, with $G$ the group defined by \eqref{Garrettdef} and $\check{G}$ the group defined by \eqref{Garrettdefdual}.  
  Recall that $\theta_G: G \rightarrow \Gm$ is the map 
  $$\theta_G(g_1,g_2,g_3;z) = \big(\mathrm{det}(g_1)\mathrm{det}(g_2) \mathrm{det}(g_3)\big)^{1/3} = z$$
  so it corresponds to the $\check{G}$-cocharacter 
$$t \mapsto \begin{bmatrix} t^{1/3} & 0\\ 0 & t^{1/3}\end{bmatrix}^{\times 3} = 2\check{\theta}_{\check{G}}(t)$$
Dually, $\theta_{\check{G}}: \check{G} \rightarrow \Gm$ is the character
$$\theta_{\check{G}}(g_1,g_2,g_3) = \mathrm{det}(g_1) \mathrm{det}(g_2) \mathrm{det}(g_3)$$
so it corresponds to the $G$-cocharacter 
$$t \mapsto \begin{bmatrix} t & 0\\0&t\end{bmatrix}^{\times 3} = (1/2) \check{\theta}_G(t)$$
  
  \item[(2)] 
 The points $n_{\check{\alpha}}^{-1}\varpi_\alpha$ lie in the convex hull of $\frac{\rho-\alpha}{a-1}$, as $\alpha$ ranges through simple roots. In particular, \eqref{automorphicbounds} reduces to the simpler form
 \begin{equation} \label{simplifiedBounds}\frac{\langle \chi, \rho\rangle}{a-1} \geq \langle \chi, \theta_G \rangle \geq \frac{\langle \chi, \rho-\alpha\rangle}{a-1}\end{equation}
 \end{itemize}

\subsection{Eigenforms and eigencharacters}

Note that at the identity coset in $X$, the top exterior power of the tangent space as a representation of $T^0$ is given by the weight
$$-(-2\rho + \alpha_0) \text{ for some (and any) simple root } \alpha_0$$
where the extra minus sign reflects the fact that we are taking the right adjoint action instead of the usual one. In particular, the weight with which $\Gm \stackrel{e^{\check{\rho}'}}{\rightarrow} T$ scales a volume eigenform $\eta$ on $X$ is
\begin{equation} \label{scaling factor} \langle \check{\rho}', 2\rho-\alpha_0\rangle = 2\langle \rho, \check{\rho}\rangle -1 = 2a-1.\end{equation}
and correspondingly the eigenform character of $G$ is given by a character extending this, i.e. $g \mapsto \theta(g)^{2a-1}$. 
Taking into account that the $\Ggr$ scaling action is defined to be $\gamma \check{\theta}$, 
the eigenform character  $\eta$ of \eqref{etadef} on $G \times \Ggr$ is given by
\begin{equation} \label{etaGGr} \eta(g, \lambda) = \theta(g)^{2a-1} \lambda^{(2a-1) \gamma}.\end{equation} 
In particular, in the notation of \eqref{LXdef} we have $\varepsilon = (2a-1)\gamma$.

\subsection{The automorphic period} \label{theorem-autperiod} 
To verify the conditions of Definition \ref{weakly dual def} we must compute
$P_X(f)$ (and dually $P_{\check{X}}(f')$)
whenever the automorphic coefficient field $\F$ has sufficiently large characteristic, 
and when $X_{\F}$ is taken to be the base change of a suitable integral model of $X$ over $\Q$.
We will in fact assume that $\F$ has large enough characteristic  so that the following two points hold:
\begin{itemize}
\item[(i)]
 (iv) of  \S \ref{subsect: intModel} is valid: that is to say,  the base-change of the integral model specified 
in \S \ref{subsect: intModel} is exactly the affine closure of 
 $ (S \backslash G)_{\F}$.  
 \item[(ii)] The conclusion noted in
\S \ref{smallorbitGinzburg} and \S \ref{smallorbitGarrett} in the case of characteristic zero remains valid for $\F$: 
 all the smaller  $G$-orbits  on the affine closure of $(S \backslash G)_{\F}$ have stabilizers that contain the unipotent radical of a proper parabolic subgroup:
 \end{itemize}
 In our examples, it is quite plausible that this does not exclude {\em any} characteristics, but we did not check this.

In what follows, then, we will use freely notation for automorphic forms as set up in \S \ref{notation}. 
Let $f$ be an automorphic form on $G$, which we will always assume to be unramified, generic, and cuspidal in the following.   
We normalize $f$ according to Whittaker normalization (see \eqref{Whitnormnew}).
Let $\Phi = \otimes\, \Phi_v \in \mathcal{S}(X_\mathbb{A})$ be the factorizable Schwartz function with local factors \begin{equation} \label{Phivdef}
\Phi_v(x) = \mathbf{1}_{X(\mathcal{O}_v)}(xe^{\gamma \check{\theta}}(\partial_v^{1/2})) \end{equation}
as dictated by the discussion around \eqref{Phidef}.

\begin{lem} \label{snt} Let $f$ be a Whittaker normalized cuspidal automorphic form on $G$, and let $\Phi = \otimes \Phi_v$ as defined above in \eqref{Phivdef}. Then the global automorphic period $P_X(f)$ unfolds into an Euler product
\begin{align*} 
P_X(f) &= q^{(g-1)\gimel} \prod_v \, \int_{A_v}\, (\eta \delta_B)^{1/2}(a) \left [\int_{F_v} \, \Phi_v(u_t a)\psi(t) \, dt\right] \, W_v^{\mathrm{ur}}(a a_{0,v}^{-1}) \, da\\
\end{align*}
where $u_t$ is a lift of $t \in F_v$ under $\Psi: U(F_v) \rightarrow F_v$, and the local integrals are normalized to give their maximal compact subgroups volume 1.   
\end{lem}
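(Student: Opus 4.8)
\emph{Overview.} The plan is to prove Lemma~\ref{snt} by the classical unfolding method in three steps: collapse the Poincar\'e series onto the open $G$-orbit of $X$, convert the resulting period into a Whittaker integral, and factor it into an Euler product. Substituting the definitions \eqref{thetaXdef}--\eqref{Phidef} into $P_X(f)=\int_{[G]}\theta_X(g)f(g)\,dg$ and stratifying $\mathring X(F)=\bigsqcup_{\mathcal O}\mathcal O(F)$ by $G$-orbits, one unfolds: since $f$ is left $G_F$-invariant and the adelic norm $|\eta(\cdot)|$ is trivial on $G_F$ by the product formula, each $G_F$-orbit in a stratum $\mathcal O$ contributes $\int_{\mathrm{Stab}(x_{\mathcal O})_F\backslash G_{\mathbb A}}|\eta(g)|^{1/2}\Phi(x_{\mathcal O}g)f(g)\,dg$ (here $H^1(F,\mathrm{Stab})=0$, as every stabilizer occurring is an extension of $\mathbf{G}_m$ by a split unipotent, so $\mathcal O(F)$ is a single $G_F$-orbit). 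For each non-open stratum, by \S\ref{smallorbitGinzburg}, \S\ref{smallorbitGarrett} --- valid in the residue characteristics we have allowed, by hypothesis (ii) --- the stabilizer contains the unipotent radical $N_P$ of a proper parabolic, and since $x_{\mathcal O}$ is $N_P$-fixed the orbital integral contains $\int_{[N_P]}f(ng)\,dn=0$. As $0\notin\mathring X$, only the open orbit survives:
\[
P_X(f)=\Delta^{\frac{\dim X-\dim G}{4}}|\partial^{1/2}|^{1/2}\int_{S_F\backslash G_{\mathbb A}}|\eta(g)|^{1/2}\Phi(x_0g)f(g)\,dg,\qquad S=T^0U^0=\mathrm{Stab}_G(x_0).
\]

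\emph{Passage to the Whittaker model.} This is the heart of the proof, and follows the unfolding computations of \cite{Ginzburg, Garrett}. Since $U^0\triangleleft U$ lies in $S$, the map $u\mapsto x_0u$ factors through $\Psi\colon U\twoheadrightarrow\mathbf{G}_a$, so $\Phi(x_0ug)$ depends on $u$ only through $t=\Psi(u)$; moreover $\Psi$ restricts to the identity on each simple root subgroup, so the additive character that $t$ will carry is exactly the Whittaker character of $U$. I would substitute the generic Fourier expansion of the cusp form $f$ and unfold it against the quotient by $S_F=T^0_FU^0_F$, using two numerical facts: the torus $T^0=\check\rho'(\mathbf{G}_m)$ scales $\mathbf{G}_a=U/U^0$ by a single weight, since $\langle\alpha,\check\rho'\rangle=\langle\alpha,\check\theta+\check\rho\rangle=1$ for all simple $\alpha$, so that $T^0_F$ together with the Whittaker indexing set acts transitively on the generic characters of $U/U^0$; and a Poisson summation in the $\mathbf{G}_a$-variable produces a ``degenerate'' term which is a constant term of $f$ along $U$ and hence vanishes by cuspidality. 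After a further appeal to cuspidality to discard the remaining non-generic Fourier modes, the open-orbit integral becomes
\[
P_X(f)=\Delta^{\frac{\dim X-\dim G}{4}}|\partial^{1/2}|^{1/2}\int_{U_F\backslash G_{\mathbb A}}W_f(g)\,|\eta(g)|^{1/2}\Big(\int_{\mathbb A}\Phi(x_0u_tg)\,\psi(t)\,d^{\psi}t\Big)\,dg,
\]
with $W_f$ the $\psi$-Whittaker function \eqref{Whitdef} and $u_t$ any lift of $t$.

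\emph{Factorization and constants.} Decomposing $G_{\mathbb A}=U_{\mathbb A}T_{\mathbb A}K$ and using that both $\Phi$ (as $e^{\gamma\check\theta}(\partial^{1/2})$ is central and each $X(\mathcal O_v)$ is $G(\mathcal O_v)$-stable) and $W_f$ are right $K$-invariant, the integral collapses to one over $T_{\mathbb A}$ against the modular character $\delta_B$, the residual $[U]$-integration giving $\int_{[U]}\psi(u)\psi(-\Psi(u))\,du=1$. Applying the Casselman--Shalika formula \eqref{shifted CS}, $W_f(a)=W_f^0\prod_vW_v^{\mathrm{ur}}(a_va_{0,v}^{-1})$, together with $\Phi=\otimes_v\Phi_v$, then produces the stated Euler product; the prefactor $q^{(g-1)\gimel}$ is the bookkeeping combination of $\Delta^{(\dim X-\dim G)/4}=\Delta^{-\dim U/4}$, the eigenmeasure factor $|\partial^{1/2}|^{1/2}=\Delta^{-\varepsilon/4}$, the Whittaker normalization $W_f^0=\Delta^{\langle\rho,\check\rho\rangle-\dim U/4}$, and the measure relation \eqref{meas form} $\mu=\Delta^{1/2}\mu^{\psi}$ comparing the self-dual measure used in Poisson summation with the unit-normalized measure of the final local integrals.

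\emph{Main obstacle.} I expect the delicate point to be the Whittaker unfolding: one must make precise how the open-orbit stabilizer $S=T^0U^0$ interacts with the Whittaker expansion of $f$ (in the same way the mirabolic does in the $\GL_n$ Rankin--Selberg method), show that every non-generic contribution is killed by iterated cuspidality --- which for $G\sim\SL_3\times\mathbf{G}_m$ and $G\sim\SL_2^3\times\mathbf{G}_m$ genuinely uses the genericity of $f$, since the Whittaker functional need not be unique --- and keep the individual Fourier modes $\int_{\mathbb A}\Phi(x_0u_tg)\psi(\xi t)\,dt$, which are not left $U_F$-invariant, grouped inside their $T^0_F$-orbit sums; in the dual Garrett case $T^0$ alone does not act transitively on $F^{\times}$, so the Whittaker indexing set is needed to supply the missing transitivity. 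The secondary nuisance is the verification that the powers of $q$ assemble to exactly $q^{(g-1)\gimel}$, which requires careful tracking of the $\partial^{1/2}$-shifts and of the self-dual versus unit normalizations.
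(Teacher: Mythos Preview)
Your approach is essentially the paper's, and the three-stage architecture (collapse to the open orbit, pass to the Whittaker model, factorize via Iwasawa and Casselman--Shalika) is exactly right. The open-orbit reduction and the constant bookkeeping are handled just as you describe.

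Where your write-up diverges from the paper is in the middle step, and there you make it harder than it is. You speak of ``substituting the generic Fourier expansion of $f$,'' of ``non-generic Fourier modes'' to be killed by iterated cuspidality, and of the Whittaker functional possibly failing to be unique. None of this arises. The paper does not expand $f$ at all; instead it collapses the outer integral to $\int_{S_{\mathbb A}\backslash G_{\mathbb A}}\Phi(g)\big(\int_{[S]}f(sg)\eta^{1/2}(sg)\,ds\big)dg$ and evaluates the inner $[S]=[U^0]\times[\Gm]$ integral directly. The $[U^0]$-integral is promoted to $[U]$ by Fourier-expanding the delta function on the one-dimensional quotient $[U/U^0]\simeq\mathbb A/F$: this produces a sum over $\alpha\in F$, whose $\alpha=0$ term is the constant term of $f$ along $U$ (killed by cuspidality), and whose $\alpha\ne 0$ terms are \emph{all} nondegenerate Whittaker integrals, because $\Psi$ restricts to the identity on every simple root subgroup. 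There are no non-generic modes to discard, and no appeal to genericity of $f$ beyond ordinary cuspidality.

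Your worry about transitivity in the dual Garrett case is also unnecessary. The paper uses only the single identity $\psi(\mathrm{Ad}_{e^{\check\rho'}(\alpha)}u)=\psi_\alpha(u)$, which holds because $\langle\alpha_i,\check\rho'\rangle=1$ for every simple $\alpha_i$; this lets the sum over $\alpha\in F^\times$ fold into the adelic $[\Gm]\simeq[T^0]$ integral, yielding $\int_{\mathbb A^\times}W_f(\underline\lambda g)\,\eta^{1/2}\delta_S^{-1}\,d\lambda$ at once. Although $\check\rho'$ is only a rational cocharacter in the dual Garrett case, the element $e^{\check\rho'}(\alpha)$ is well-defined in $\check G(F)$ because the ambiguity in the cube root is absorbed by the quotient by $\Delta\mu_3$. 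No auxiliary ``Whittaker indexing set'' is needed; $T^0_F$ alone is transitive on the nontrivial characters of $U/U^0$. After this fold the Whittaker function $W_f$ has been manufactured in one stroke, and the rest is Iwasawa plus \eqref{shifted CS}, exactly as you say.
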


\begin{proof}
We will use the notation $[G]$, etc., as in \eqref{bracketnotation}. 
 By \eqref{PXdef}, the automorphic period attached to $X$ is given by
\begin{align*}
 P_X(f)   &=  \Delta^{-\dim(S)/4} |\partial^{1/2}|^{1/2}  \int_{[G]} \, f(g) \sum_{x \in X_F-\{0\} } \Phi(xg)\eta^{1/2}(g) \, dg\\
 \\    &\stackrel{(*)}{=} \Delta^{-\dim(S)/4} |\partial^{1/2}|^{1/2}  \int_{[G]} \, f(g) \sum_{x \in S_F \backslash G_F} \Phi(xg)\eta^{1/2}(g) \, dg\\
&= \left( \mbox{same} \right) 
\int_{S_F\backslash G_\mathbb{A}} \, \Phi(g)f(g) \eta^{1/2}(g) \, dg\\
&= 
\left( \mbox{same} \right)
\int_{S_\mathbb{A}\backslash G_\mathbb{A}} \Phi(g) \int_{[S]} \, f(sg) \eta^{1/2}(sg) \, ds \, dg
\end{align*}
where $ds$ here is the left Haar measure on $S$.  The equality labelled $(*)$ -- that is, the passage 
from $X_F-\{0\}$ to $S_F \backslash G_F$ -- arises as follows: The explicit computation of smaller orbits 
in  \S \ref{smallorbitGinzburg} and \S \ref{smallorbitGarrett}, together with the assumptions on the
characteristic of $\F$ mentioned at the start of this section,  shows that all orbits
except the open one contain a unipotent radical of a proper parabolic in the stabilizer group, and therefore  
the contribution of that orbit to $\theta$ is orthogonal to cusp forms.  On the other hand, the $F$-points
of the open orbit are indeed identified with $S_F \backslash G_F$ since the Galois cohomology of $S$
vanishes by the vanishing of $H^1(F, \Gm)$ and $H^1(F, \mathbf{G}_a)$. 

Writing $S = U^0 \times \Gm$ in the evident way,
and $\delta_S$ the modular character of $\Gm$ acting on $U^0$
 the inner integral becomes -- writing $\unlambda$ for $e^{\check{\rho}'}(\lambda) \in G$,
 so that  e.g. 

  $d(\mathrm{Ad}(\unlambda) u) = \delta_S(\unlambda) du$ -- 
\begin{align*}
& \int_{[S]} \, f(sg) \eta^{1/2}(sg) \, ds \\
&= \int_{[U^0][\Gm]} \, f(u\unlambda g)\eta(\unlambda g)^{1/2} \, \delta_S^{-1}(
\unlambda) \, d\lambda \, du^0\\
&\stackrel{\eqref{meas form}}{=} \Delta^{(u-1)/2} \, \int_{[U^0][\Gm]} \, f(u\unlambda g)\eta(\unlambda g)^{1/2} \, \delta_S^{-1}(\unlambda) \, d\lambda \, d^\psi u^0\\
&=  \Delta^{(u-1)/2} \, \sum_{\alpha \in F^\times} \, \int_{[U][\Gm]} \, f(u\unlambda g)\eta(\unlambda g)^{1/2} \delta_S^{-1}(\unlambda) \psi_\alpha(u) \, d\lambda \, d^\psi u
\end{align*}
where in the second to last step we switched to the Fourier self-dual measure in the $u$-variable writing $u=\dim(U)$ and at the final stage we  expand the $\delta$-funtion on $U^0 \backslash U$ in a Fourier series 
to replace an integral over $U^0$ by an integral over $U$. The identity
$$\psi(\mathrm{Ad}_{e^{\check{\rho}'}(\alpha)}(u)) = \psi_\alpha(u)$$
where $\psi_{\alpha}$ is obtained by postcomposing $U \rightarrow \mathbf{G}_a$
with multiplication by $\alpha$, allows us to combine the sum over $\alpha \in F^\times$ with the adelic integral over $\Gm$ to get 
\begin{align*}
&= \Delta^{(u-1)/2} \int_{[U]}\int_{\mathbb{A}^\times} \, f(u \unlambda g)\eta(\lambda g)^{1/2} \delta_S^{-1}(\unlambda) \psi(u) \, d\lambda \, d^\psi u\\
&=  \Delta^{(u-1)/2} \int_{\mathbb{A}^\times} \, W_f(\unlambda g) \eta(\lambda g)^{1/2} \delta_S^{-1}(\unlambda) \, d\lambda
\end{align*}

Returning to the original period $P_X$ and substituting the previous line into the inner integral over $[S]$, we have  
\begin{align*} \label{PXinter} 
P_X(f) &=  (\Delta^{(u-1)/2} \Delta^{-\dim(S)/4} |\partial^{1/2}|^{1/2}) \int_{S_{\mathbb{A}} \backslash G_{\mathbb{A}}} \Phi(g) \, \int_{\mathbb{A}^\times} \, W_f(\unlambda g) \eta^{1/2}(\lambda g) \delta_S^{-1}(\lambda) \, d\lambda \, dg\\
&=  (\Delta^{(u-1)/2} \Delta^{-\dim(S)/4} |\partial^{1/2}|^{1/2})\int_{U^0_{\mathbb{A}} \backslash G_{\mathbb{A}}} \, \Phi(g)W_f(g) \eta(g) \, dg\\
&= \int_{A_{\mathbb{A}}} \, (\eta \delta_B)^{1/2}(a) \, \left[ \int_{\mathbb{A}} \, \Phi(u_t a) \psi(t) \, dt\right] W_f(a) \, da\\
&=  Q \prod_v \, \int_{A_v}\, (\eta \delta_B)^{1/2}(a) \left [\int_{F_v} \, \Phi_v(u_t a)\psi(t) \, dt\right] \, W_v^{\mathrm{ur}}(a a_{0,v}^{-1}) \, da
\end{align*}

where in the second step we folded the inner $\mathbb{A}^\times \simeq U^0_{\mathbb{A}} \backslash S_{\mathbb{A}}$ integral back into the outer $S_{\mathbb{A}} \backslash G_{\mathbb{A}}$ integral, in the last step we used Whittaker normalization \eqref{shifted CS}, and $Q$ is the global constant
defined as 
$$Q =  \Delta^{(u-1)/2-\dim(S)/4} |\partial^{1/2}|^{1/2}   W^0_f=\Delta^{\gimel/2}.$$ 
Here we computed the exponent of $\Delta$ via 
$\dim(S) = u$, $|\partial^{1/2}|^{1/2} = q^{-\frac{(g-1)}{2}  \langle \eta, \gamma \check{\theta} \rangle}$ and \eqref{Whitnormnew} as
$$
  \frac{(u-1)}{2}  - \frac{u}{4}  - \frac{\langle \eta,  \gamma \check{\theta}\rangle}{4} +
\langle  \rho,  \rho^{\vee} \rangle - \frac{u}{4}  =  \langle \rho, \check{\rho}\rangle -\frac{1}{2}  
 - \frac{(2a-1)\gamma}{4}
= (a-1/2) (1-\gamma/2) =\frac{\gimel}{2}.$$
\end{proof}

Let $P_v$ be the local factor
$$P_v :=\int_{A_v} (\eta^{1/2}\delta_B^{-1})(a)\bigg[\int_{F_v} \Phi_v(u_t a)\psi(t) \, dt\bigg] \, W^{\mathrm{un}}_v(a a_{0,v}^{-1}) \, da$$
appearing in the statement of Lemma \ref{snt}. Then we compute
\begin{align*}
P_v &\stackrel{\eqref{Phivdef}}{=}  \int_{A_v} (\eta^{1/2}\delta_B^{-1})(a)\bigg[\int_{F_v} \mathbf{1}_{X(\mathcal{O}_v)}(u_t a\varpi^{\gamma m_v\check{\theta}})\psi(t) \, dt\bigg] \, W^{\mathrm{un}}_v(a a_{0,v}^{-1})  \, da \\
&=  \sum_{\chi \in X_*(A)} \, (\eta^{1/2}\delta_B^{-1})(\varpi^\chi) W^{\mathrm{un}}_v(\varpi^{\chi} a_{0,v}^{-1})  \bigg[\int_{F_v} \mathbf{1}_{X(\mathcal{O}_v)}(u_t \varpi^{\chi+\gamma m_v\check{\theta}})\psi(t) \, dt\bigg] 
\end{align*}

 Let us now compute the inner integral over $F_v$. According to the integrality inequalities \eqref{kineq} and \eqref{kineq2}, the following conditions on $u\varpi^{\chi+\gamma m_v\check{\theta}}$ guarantee the nonvanishing of the Schwartz function $\mathbf{1}_{X(\mathcal{O}_v)}$
 at the evaluation point in the integral:
 \begin{equation}\label{SchwartzFunctionBound}
 \langle \chi, \theta\rangle \geq   \mbox{ all of } \langle \chi,  \frac{\rho-\alpha}{a-1} \rangle  - \gamma m_v\mbox{ and } \langle \chi,  n_{\check{\alpha}}^{-1} \varpi_{\alpha} \rangle -\gamma m_v  \text{ for } \alpha \text{ simple roots}
 \end{equation}
\begin{equation}\label{volumeAdditiveIntegral}
v(t) \geq \langle \chi, \rho-(a-1)\theta\rangle - m_v
\end{equation}
Since the conductor of $\psi$ is $\varpi^{-2m_v}$, the $F_v$-integral vanishes  over the range defined by
\eqref{volumeAdditiveIntegral}, unless 
\begin{equation} \label{WhittakerFunctionBound} \langle \chi, \rho-(a-1)\theta\rangle - m_v \geq -2m_v \iff \frac{\langle \chi,\rho\rangle +m_v}{a-1} \geq \langle \chi, \theta\rangle 
\end{equation}
Note here we are using $a > 1$.

When the integral of $\psi(t) \, dt$ does not vanish, it contributes the volume of its range of integration, which is $q^{-\langle \chi, \rho-(a-1)\theta\rangle+m_v}$ according to \eqref{volumeAdditiveIntegral}, so we have
\begin{align*}
P_v &= \sum_{\chi}  (\eta^{1/2}\delta_B^{-1})(\varpi^\chi)W_v^{\mathrm{un}}(\varpi^\chi a_{0,v}^{-1}) \, q^{-\langle \chi, \rho-(a-1)\theta\rangle + m_v}\\
&\stackrel{\eqref{topolish}}{=} q^{-2m_v\langle \rho, \check{\rho}\rangle} \sum_{\chi} \eta^{1/2}(\varpi^\chi)s_{\chi+2m_v\check{\rho}}(\mathrm{Sat}_v)\, q^{(a-1)\big(\langle \chi, \theta\rangle + \gamma m_v\big)}
\end{align*}
where the sum is over $\chi$ that satisfy both
\eqref{WhittakerFunctionBound} and 
\eqref{SchwartzFunctionBound}; and
$s_\chi$ is the Weyl character with highest weight $\chi$ if $\chi$ is dominant, and 0 otherwise. 
Also, at the last stage, we grouped several powers of $q$ as follows to obtain the final prefactor: $\delta_B^{-1}(\varpi^{\chi}) \cdot
q^{-\langle \chi+2 m_v \check{\rho}  , \rho  \rangle } \cdot q^{-\langle \chi, \rho \rangle} = q^{-2 m_v \langle \check{\rho}, \rho \rangle}$, where the middle factor $q^{-\langle \chi+2m_v \check{\rho}, \rho\rangle}$ comes from the Casselman--Shalika formula \eqref{Wvun}.

 \subsubsection{Reindexing} \label{reindexing} 
 This subsection \S \ref{reindexing} carries
 out some tedious reindexing which involves only elementary algebra; the reader
 might wish to skip directly to the conclusion
 Proposition \ref{thm: PX}.
  Let us make the change of variables
$$\chi_{\textrm{new}} =   \chi_{\textrm{old}}+ 2m\check{\rho}$$
 then the above expression becomes 
\begin{align*}
P_v &= q^{-2m_v\langle \rho, \check{\rho}\rangle} \, q^{m_v\langle \eta, \check{\rho} \rangle} \sum_{\chi} \eta^{1/2}(\varpi^\chi)s_\chi(\mathrm{Sat}_v)\, q^{(a-1)\big(\langle \chi, \theta\rangle + \gamma m_v\big)}\end{align*}
where now $\chi = \chi_{\mathrm{new}}$ is now ranging over dominant coweights of $A$ subject to the following bounds: \eqref{WhittakerFunctionBound} gives rise to
$$\frac{\langle \chi_{\mathrm{new}}-2m\check{\rho}, \rho\rangle + m_v}{a-1} \geq \langle \chi_{\mathrm{new}}, \theta\rangle \iff \frac{\langle \chi_{\mathrm{new}}, \rho\rangle }{a-1} - (2+\gamma)m_v \geq \langle \chi, \theta\rangle$$
and \eqref{SchwartzFunctionBound} gives rise to 
\begin{align*}
&\langle \chi_{\mathrm{new}}, \theta \rangle \geq \text{ all of } \langle \chi_{\mathrm{new}}-2m_v\check{\rho}, \frac{\rho-\alpha}{a-1}\rangle -\gamma m_v \text{ and } \langle \chi_{\mathrm{new}}-2m_v\check{\rho}, n_{\check{\alpha}}^{-1}\varpi_\alpha\rangle - \gamma m_v \\
&\iff \langle \chi_{\mathrm{new}}, \theta \rangle \geq \frac{\langle \chi_{\mathrm{new}}, \rho-\alpha\rangle}{a-1} - (2+\gamma)m_v \, \text{ and } \langle \chi_{\mathrm{new}}, n_{\check{\alpha}}^{-1}\varpi_\alpha\rangle - (2+\gamma)m_v 
\end{align*}
Combining these into a string of inequalities and dropping the subscript ``new", we arrive at
\begin{equation}
\frac{\langle \chi, \rho\rangle}{a-1} \geq \langle \chi, \theta\rangle + (2+\gamma)m_v \geq \frac{\langle \chi, \rho-\alpha \rangle}{a-1} \, \text{ and } \,  \langle \chi, n_{\check{\alpha}}^{-1}\varpi_\alpha\rangle 
\end{equation}

We make another substitution on $\chi$, this time only affecting the center  
$$\chi_{\textrm{new}} = \chi_{\textrm{old}} + (2+\gamma)m_v \check{\theta}$$
Then $\chi_{\textrm{new}}$ ranges over dominant coweights satisfying the following bounds: 

\begin{equation}\label{automorphicbounds}  
\frac{\langle \chi, \rho\rangle}{a-1} \geq \langle \chi, \theta\rangle \geq \frac{\langle \chi, \rho-\alpha \rangle}{a-1} \, \text{ and } \, \langle \chi, n_{\check{\alpha}}^{-1}\varpi_\alpha\rangle
\end{equation}
and the local period becomes 
\begin{align*}
P_v &= q^{ -2m_v\langle \rho, \check{\rho}\rangle} \, q^{m_v\langle \eta, \check{\rho} \rangle} \sum_{\chi: \eqref{automorphicbounds}} \eta^{1/2}(\varpi^{\chi-(2+\gamma)m_v\check{\theta}})s_{\chi-(2+\gamma)m_v\check{\theta}}(\mathrm{Sat}_v)\, q^{(a-1)\big(\langle \chi, \theta\rangle - 2m_v\big)}\\
&= q^{-2m_va}  q^{-2(a-1) m_v} q^{\big(\frac{(2+\gamma)\langle \eta, \check{\theta}\rangle }{2}\big)m_v} (\chi_f \circ \check{\theta})(\partial_v^{-1-\gamma/2})  \sum_{\chi: \eqref{automorphicbounds}} q^{\langle \chi, (a-1)\theta-\eta/2\rangle} \, s_\chi(\mathrm{Sat}_v)
\end{align*}
where we use the central character $\chi_f$ of the automorphic form $f$ to express
$$s_{\chi-(2+\gamma)m_v\check{\theta}}(\mathrm{Sat}_v) = (\chi_f \circ \check{\theta})(\partial_v^{-1-\gamma/2})s_\chi(\mathrm{Sat}_v)$$

Using the relation $\eta = (2a-1) \theta$
 from \eqref{etaGGr} for the eigenform character on $G$, 
we can express the power of $q$ outside the summation more succinctly in terms of the constant $\gimel$ we defined 
in \eqref{gimeldef}: $$-2ma -2(a-1)m + (2+\gamma)(2a-1)m/2 = (2a-1) m (-1+\gamma/2) = -m\gimel,$$
We arrive at the following final expression for our local period:
$$P_v = q^{-m_v \gimel} (\chi_f \circ \check{\theta})(\partial_v^{-1-\gamma /2})  \sum_{\chi: \eqref{automorphicbounds}} q^{\langle \chi,  -\theta/2 \rangle} \, s_\chi(\mathrm{Sat}_v).$$

Returning to the formula given by Lemma \ref{snt} for the global period 
$P_X(f) = \Delta^{\gimel/2} \prod_v P_v$
and combining all the local constants $\prod_v \, q^{-m_v\gimel} = \Delta^{-\gimel/2}$, we arrive at the main result on the automorphic side, which we summarize: 
 
\begin{prop}\label{thm: PX} Let $(G, \check{G})$ be as in the statement of Theorem \ref{mainthm}. Let $X = \overline{T^0U^0\backslash G}$ be viewed as a conical $G$-variety with $\Ggr$-acting through $\gamma \check{\theta}_G$. Suppose that the automorphic coefficient field $\mathbf{F}$ has sufficiently large characteristic to apply  (iv) of  \S \ref{subsect: intModel}.
 Then the automorphic $X$-period of an unramified cusp form $f$ on $G$ is given by an Euler product
$$P_X(f) =  (\chi_f\circ \check{\theta})(\partial^{-1-\gamma/2})) \, \prod_v \, \sum_{\chi: \eqref{automorphicbounds}} q^{-\langle \chi, \theta/2\rangle} \, s_\chi(\mathrm{Sat}_v)$$
where $\chi_f$ is the central character of $f$, $\gamma = \frac{1}{\langle \rho, \check{\rho}\rangle -1}$, $\mathrm{Sat}_v$ is the Satake parameter of $f$ at a place $v$, and $\chi$ ranges through dominant coweights satisfying \eqref{automorphicbounds}. 
\end{prop}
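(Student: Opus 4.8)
The plan is to follow the unfolding method of Rankin--Selberg type, reducing $P_X(f)$ to an Euler product of local Whittaker integrals and then evaluating each local factor by means of the integrality criterion of Proposition~\ref{intpointsX}. The headline point, as noted after the statement of Theorem~\ref{mainthm}, is that the inequalities \eqref{kineq}, \eqref{kineq2} cutting out the integral points of $X$ are precisely the ones that end up cutting out the summation range \eqref{automorphicbounds}.

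First I would expand the definition \eqref{PXdef}, so that $P_X(f)$ becomes, up to the normalizing constant $\Delta^{-\dim(S)/4}|\partial^{1/2}|^{1/2}$, the integral over $[G]$ of $f(g)\eta^{1/2}(g)$ against the Poincar\'e series $\sum_{x\in\mathring X(F)}\Phi(xg)$. The first reduction is to replace the sum over $\mathring X(F) = X(F)-\{0\}$ by the sum over the $F$-points of the open $G$-orbit, i.e.\ over $S_F\backslash G_F$: by the orbit analyses of \S\ref{smallorbitGinzburg} and \S\ref{smallorbitGarrett}, together with the characteristic hypotheses imposed at the start of \S\ref{theorem-autperiod}, every non-open orbit has a stabilizer containing the unipotent radical of a proper parabolic, so its contribution to $\theta_X$ is orthogonal to the cusp form $f$; and the $F$-points of the open orbit are identified with $S_F\backslash G_F$ because $H^1(F,\Gm)$ and $H^1(F,\mathbf{G}_a)$ vanish. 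I would then collapse the sum--integral to $\int_{S_\mathbb{A}\backslash G_\mathbb{A}}\Phi(g)\int_{[S]}f(sg)\eta^{1/2}(sg)\,ds\,dg$, and, decomposing $S = U^0\times\Gm$, Fourier-expand the inner integral over $[U^0]$. The identity $\psi(\mathrm{Ad}_{e^{\check\rho'}(\alpha)}(u)) = \psi_\alpha(u)$ allows the sum over $\alpha\in F^\times$ to be merged into the adelic $\Gm$-integral, converting the $[U^0]$-integral into a $[U]$-integral against $\psi$, i.e.\ into the Whittaker period $W_f$. This is Lemma~\ref{snt}, which factors $P_X(f)$ as $\Delta^{\gimel/2}\prod_v P_v$ with $P_v$ an explicit local integral over $A_v$; here the one subtlety is the passage from the integral to the Fourier self-dual measure on $U$, which introduces $\Delta^{(u-1)/2}$ through \eqref{meas form}.

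What remains is to evaluate $P_v$. Inserting the definition \eqref{Phivdef} of $\Phi_v$ and writing $a=\varpi^\chi$, the inner integral over $t\in F_v$ is governed by the integrality criterion of Proposition~\ref{intpointsX} in the simplified form \eqref{kineq}, \eqref{kineq2}: the Schwartz function $\mathbf{1}_{X(\mathcal O_v)}$ enforces the bounds \eqref{SchwartzFunctionBound}, the conductor $\varpi^{-2m_v}$ of $\psi$ enforces \eqref{WhittakerFunctionBound}, and on the surviving range the $t$-integral contributes the volume $q^{-\langle\chi,\rho-(a-1)\theta\rangle+m_v}$. Combining this with the shifted Casselman--Shalika formula \eqref{shifted CS}, \eqref{Wvun}, \eqref{topolish} expresses $P_v$ as a sum over dominant $\chi$ of $q$-powers times Weyl characters $s_\chi(\mathrm{Sat}_v)$. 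I would then carry out the two reindexing substitutions of \S\ref{reindexing} --- first $\chi\mapsto\chi+2m_v\check\rho$ to absorb the Casselman--Shalika factors, then $\chi\mapsto\chi+(2+\gamma)m_v\check\theta$ to re-center the bounds --- after which the summation range is exactly \eqref{automorphicbounds} and all $m_v$-dependence is concentrated in the local prefactor $q^{-m_v\gimel}(\chi_f\circ\check\theta)(\partial_v^{-1-\gamma/2})$, using $\eta=(2a-1)\theta$ from \eqref{etaGGr}. Taking the product over $v$, the factors $q^{-m_v\gimel}$ multiply to $\Delta^{-\gimel/2}$, which cancels the $\Delta^{\gimel/2}$ produced by Lemma~\ref{snt}, leaving exactly the asserted formula.

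The main obstacle is bookkeeping rather than conceptual: one must track all the powers of $q$ --- equivalently of $\Delta$ --- coming from the Whittaker normalization \eqref{Whitnormnew}, the eigenmeasure scaling $|\partial^{1/2}|^{1/2}$, the integral-versus-self-dual measure discrepancy, and the two reindexing translations, and check that they conspire to the clean constants $\gamma$ and $\gimel$. The geometric input --- that \eqref{kineq}, \eqref{kineq2} and \eqref{automorphicbounds} are the same constraints --- is immediate from Proposition~\ref{intpointsX}, so the only real risk is an arithmetic slip in the constants, which is precisely why \S\ref{reindexing} is spelled out in such detail.
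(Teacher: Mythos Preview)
Your proposal is correct and follows essentially the same approach as the paper: the unfolding to the Whittaker model via Lemma~\ref{snt} (using the orbit analysis to discard lower strata and the Fourier expansion over $[U^0]$), the evaluation of the local $t$-integral through the integrality criterion of Proposition~\ref{intpointsX}, the application of the shifted Casselman--Shalika formula, and the two reindexing substitutions $\chi\mapsto\chi+2m_v\check\rho$ and $\chi\mapsto\chi+(2+\gamma)m_v\check\theta$ are exactly the steps carried out in \S\ref{theorem-autperiod} and \S\ref{reindexing}. Your identification of the bookkeeping of $q$-powers as the only genuine hazard is also on point.
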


\subsection{Comparison with spectral period for $\check{X}$} \label{theorem-specperiod} 

Having computed the automorphic $X = \overline{T^0U^0\backslash G}$-period, we compare it with the spectral period of 
$\check{X} := \overline{\check{T}^0 \check{U}^0\backslash \check{G}}$.
The content of this comparison amounts to 
matching the set of weights in Proposition \ref{thm: PX} --
i.e. the various $\chi$ satisfying \eqref{automorphicbounds} --  
with the set of weights occurring in the function ring of $\check{X}$.

By definition (\S \ref{conical} and \eqref{LXdef0prod} and \eqref{LXdef0}, also using
\S \ref{etaGGr} to compute $\varepsilon$) the normalized spectral period of a $\check{G}$-valued $L$-parameter $\varphi$ is  \begin{equation} \label{LXspectral} L_{\check{X}}(\varphi) =
\Delta^{\frac{(2a-1)\gamma- \dim(G/U)}{4}} \mathfrak{z}\prod_v \, \mathrm{tr}\big(q^{-1/2} \times \mathrm{Fr}_v \, | \,
\kk[\check{X}]\big)\end{equation} 
The power of $\Delta$ equals $-3/4$:
 \begin{equation} \label{eval-exp}  \mathrm{dim}(G/U) - (2a-1)\gamma =  3.\end{equation} 

Recall that $\mathfrak{z}$ is defined in \eqref{zdef},
and here it can be computed 

using \eqref{warning}, \eqref{etaGGr}
in terms of the central character $\chi_f$ as 
$$(\chi_f \circ  \check{\theta}_G^{(2a-1)\gamma})(\partial^{-1/2}) = (\chi_f \circ \check{\theta}_G)(\partial^{-1-\gamma/2})$$
which agrees exactly with the central character factor appearing in Proposition \ref{thm: PX}.

For the rest of this section, for notational simplicity we will use all the notation ``for $\check{G}$" instead of ``for $G$" as we have done in the automorphic calculation; more precisely, we follow these conventions: 

\begin{center}
$\check{\rho}, \check{\theta}_{\check{G}}, \check{\rho}'$ are all cocharacters into $\check{G}$, and $\chi$'s will be highest weights for $\check{G}$.   
\end{center}
According to Proposition \ref{Xfunction}, we can calculate the graded $\check{G}$-representation on $k[\check{X}]$ as
the direct sum of $V_{\chi}^* = V_{-w\chi}$ over ``relevant'' $\chi$ where $w$ is the long Weyl element. Writing $\check{\rho}' = \check{\rho} + \check{\theta}$, the relevancy criterion given by equation \eqref{relevant} reads as
$$0 \leq \langle \chi, \check{\rho}+ \check{\theta}_{\check{G}} \rangle \leq \underset{\check{\alpha} \text{ simple}}{\mathrm{min}} \langle \chi, \check{\alpha}\rangle$$
Subtracting $\langle \chi, \check{\rho}\rangle$ from the previous line and taking the negative of these inequalities, we arrive at
\begin{equation} \label{spectralbounds} \langle \chi, \check{\rho} \rangle \geq \langle \chi, -\check{\theta}_{\check{G}} \rangle \geq \underset{\alpha \text{ simple}}{\mathrm{max}}\,\langle \check{\rho} -\check{\alpha}, \chi\rangle\end{equation}
Now, replacing $\chi$ by  $\chi' := -w\chi$, we get $\kk[\check{X}] \simeq \bigoplus V_{\chi'}$ where
$\chi'$ satisfies 
\begin{equation} \label{spectralbounds2} \langle \chi', \check{\rho} \rangle \geq \langle \chi', \check{\theta}_{\check{G}} \rangle \geq \underset{\alpha \text{ simple}}{\mathrm{max}}\,\langle \check{\rho} -\check{\alpha}, \chi'\rangle\end{equation}
By \eqref{warning}, under the isomorphism $X_*(\check{A}) \simeq X^*(A)$, 
$\check{\theta}_{\check{G}}$ corresponds to $\gamma^{-1} \theta_G=(a-1)\theta_G$; so we see that  \eqref{automorphicbounds} and \eqref{spectralbounds2} are exactly equivalent --
recalling that the second set of inequalities in \eqref{automorphicbounds} follows from the first by the convex hull property.

Moreover the $\Ggr$ grading 
on $V_{\chi}$
is given by $\langle \chi, \gamma \check{\theta}_{\check{G}}\rangle\stackrel{\eqref{warning}}{=}
\langle \chi, \theta_G \rangle$.  Consequently, the local factor in the product of \eqref{LXspectral} is given by
$$
\mathrm{tr}\big(q^{-1/2} \times \mathrm{Fr}_v \, | \,
\kk[\check{X}]\big) = \sum_{\chi : \eqref{spectralbounds2}}  q^{-\langle \chi, \theta_G \rangle/2}  s_{\chi}(\mathrm{Sat}_v),
$$
where we sum over dominant weights $\chi$ satisfying \eqref{spectralbounds2}. Now comparing with  Proposition \ref{thm: PX} we can rewrite  
\eqref{LXspectral}
as 
$$ L_{\check{X}}(\varphi) = \Delta^{-3/4} P_X(\varphi),$$
i.e. coinciding with $P_X(\varphi)$ from Theorem \ref{thm: PX} but for the factor of $\Delta^{-3/4}$. 
This concludes the proof of Theorem \ref{mainthm}. 

\begin{rem} In the automorphic period computation we produced a set of inequalities \eqref{automorphicbounds} on the dominant cocharacters of $G$, which, in the Ginzburg and Garrett examples reduces to simpler inequalities as in \eqref{simplifiedBounds} since one set of inequalities lies in the convex hull of the other. We believe that this phenomenon only occurs in cases isogenous to the Ginzburg and Garrett examples, but we leave the general automorphic calculation available since it will be essential for a deeper understanding of weak duality for these singular spaces. 
\end{rem}

\appendix

\section{Comparison with \cite{BZSV}}\label{BZSVcomp}

The following remarks are not necessary to read this paper,
but are designed to help the unfortunate reader  or author
who needs to interface the conventions here with those appearing in \cite{BZSV}.

 \subsection{Left and right actions} 

\cite{BZSV} follows the following left right convention:
\begin{itemize}
\item 
when we consider $(G, X)$ automorphically,  we take $G$ to act on the right;
\item 
when we consider $(G,X)$ spectrally, we take $G$ to act on the left; and
\item we pass 
one to the other in the usual way (i.e. via inversion on $G$)
but on $\GGm$ we do nothing. 
\end{itemize}

Thus, for example, taking $G=\SL_2, X= \mathbf{A}^2$, and $\GGm$ to act by scaling means:

\begin{itemize}
\item considered automorphically, the action is $x \cdot (g, \lambda) = xg\lambda$;
\item considered spectrally, the action is $(g, \lambda) \cdot  x = x g^{-1} \lambda$.
\end{itemize}

Taking $X=G/H$ and $\GGm$ acting trivially means:
\begin{itemize}
\item  considered automoprhically, we are taking $X=H \backslash G$ as a right $G$-space;
\item considered spectrally, we take $X=G/H$ as a left $G$-space.
\end{itemize}

In the BZSV convention, the Tate period is $G=\check{G}=\Gm$
and $X=\check{X} = \mathbf{A}^1$ and all actions are by scaling;
but the actions on $\check{X}$ are left actions. 

 In this paper we have worked exclusively 
 with right $G$ and $\check{G}$-spaces;
when we say that $(G, X)$ and $(\check{G}, \check{X})$
are dual, it corresponds to  the same duality in the BZSV notation where
the left action of $\check{G}$ on $\check{X}$ is defined via:
\begin{equation} \label{BZSVswitch} \check{g} \cdot_{\mathrm{BZSV}} \check{x} = \check{x} (\check{g}^d)^{-1} \end{equation}
with $d$ the duality (=Chevalley, more or less) involution. 
Thus, 
in our normalization,  the Tate period is $G=\check{G}=\Gm$
and $X=\check{X} = \mathbf{A}^1$ and all actions are by scaling
and now all actions remain right actions. 

\subsubsection{Which $L$-function appears: comparison with \cite{BZSV}}  \label{whichL}  
Let us consider now the situation when $\check{X}$ is a vector space. (We are only concerned
here with straightening out some issues of left versus right, so the global geometry of $\check{X}$ is irrelevant.) 
Ignoring shifts and normalization factors, the conjectures in Chapter 14 of \cite{BZSV} have the form
$$ \langle P_X, f \rangle \sim L(\check{X}, f ).$$
 Then the above $L$-function is the usual one whose local factor the characteristic polynomial of Frobenius on the vector space $\check{X}$.

 Now, our version of the same duality replaces the left action of $\check{G}$ on $\check{X}$
 by the right action given by $\check{x} \check{g} = (\check{g})^d \check{x}$. 
In this right action,  the associated left action of $\check{G}$ on $k[\check{X}]$
 precisely coincides with the  (left) action of $\check{G}$ of the symmetric algebra of the (left) action of $\check{G}$ on $\check{X}$;
 this reduces to the fact that the involution $d$ inverts a maximal torus and thereby, by considering characters,
 replaces $\check{X}$ with its dual.
  
\subsubsection{Switching automorphic and spectral sides: comparison with \cite{BZSV}} 
In particular, suppose that $(G, X)$ and $(\check{G}, \check{X})$ are dual in the notation of this paper. Both sides have right action.
Then converting the spectral side to a left action via \eqref{BZSVswitch}, we arrive at a dual pair in the sense of \cite{BZSV}:
$$(G, X)\stackrel{\mathrm{BZSV}}{\longleftrightarrow}\left(\check{G}, \check{X}, \check{g} \cdot \check{x} = \check{x}(\check{g}^d)^{-1} \right)$$
Switching automorphic and spectral sides according to the ``BZSV convention'' by inversion gives 
$$ (\check{G}, \check{X},  \check{x}  \cdot \check{g} = \check{x}\check{g}^d )
\stackrel{\mathrm{BZSV}}{\longleftrightarrow} (G, X, g \cdot x = x g^{-1}),$$
and finally switching back to the notation of our paper, i.e., undoing \eqref{BZSVswitch} to obtain a right action on the spectral side, we have
$$ (\check{G}, \check{X},  \check{x}  \cdot \check{g} = \check{x} \check{g}^d )
\stackrel{\mathrm{BZSV}}{\longleftrightarrow} (G, X, x \cdot g = x g^d),$$
In other words ``transferring between our notation and \cite{BZSV}''
does not commute with ``switching $G$ and $\check{G}$,''
there is a twist through the dualizing involution.

\subsection{Nonlinear $L$-functions} \label{QuillenSection}  
We will outline why the definition of the $L$-function attached to $\check{X}$ 
(given e.g. in \eqref{eq: nonlinearLdef}) is compatible with the one studied in \cite{BZSV}. 
We do not however attempt to give a treatment with full proofs, which would be very tiring both for us and for you; for example, we will 
ignore issues such as convergence, the topology on the Galois group, and the technicalities of derived rings.

For this section let us 
write $\Gamma$ for the {\'e}tale fundamental group (=unramified Galois group) of the curve
$\Sigma$ and $\Gamma^0 \subset \Gamma$ for the geometric {\'e}tale fundamental group
(=unramified Galois group of the curve over $\overline{\F_q}$).   

To simplify our life we suppose that (i)  $\check{X}$ is conical (ii) that  the genus of $\Sigma$  is $2$ or greater
and (iii)  that $\Gamma^0$ fixes only the origin of $\check{X}$.

According to \cite{BZSV} the appropriate $L$-function
should be the Frobenius trace on the $L$-sheaf defined therein,
which, in words, amounts to 
\begin{quote} (*) the trace of  $t \times \mathrm{Frobenius}$  on the space of  functions
on the derived fixed points of $\Gamma^0$ on $\check{X}$.
\end{quote}
Here    $t \in \Gm(k)$ is a parameter playing the role of $q^{-s}$
in the usual definition of $L$-functions;
actually, \cite{BZSV} involves volume forms in place of functions, but our conventions
introduce an extra dualization, cf. \S \ref{whichL},  so we get functions instead.\footnote{Also, 
\cite{BZSV} specializes the parameter $t$ to $q^{-1/2}$.}
 
\subsubsection{A result of Quillen} 
We need to recall a result of Quillen \cite{Quillen}. Suppose that $R = \bigoplus R_n$ is a graded $k$-algebra
with $R_0 = k$ and all $R_n$ finite-dimensional over $k$. 
Quillen proves  (see Theorem 11.1 of \textit{loc. cit}) that the
formal sum $\sum (\dim R_n) t^n$ -- i.e. ``the trace of $t \in \Gm$
acting on functions on $\spec R$'' -- can be expressed as an infinite product 
\begin{equation} \label{Q1}  
 \sum (\dim R_n) t^n=\prod_{m \geq 1, q \geq 0} (1-t^m)^{(-1)^{q+1} t_{q,m}}\end{equation} 
where $t_{q,m}$ is the dimension of the $m$th graded piece
of the $q$th cohomology $T_q$ of the tangent complex of $R/k$. 
For each $m$ the product over $q$ is finite.\footnote{
Note that Quillen's formula has a sign $(-1)^q$ and replaces $t_{q,m}$
by the   dimension of the $m$th graded piece of $D_q(k/R)$. However this $D_q(k/R)$ is the shift by $1$
of the tangent complex for $R/k$ by an application of
the long exact sequence \cite[Theorem 5.1]{Quillen} for $k \rightarrow R \rightarrow k$; thus our sign $(-1)^{q+1}$.
 }
The proof of this identity proceeds
by extracting dimensions in Quillen's spectral sequence
relating, on the one hand, the symmetric power on the (derived) tangent space,
and, on the other hand, $\mathrm{Tor}^R(k,k)$. 
Now
suppose $\phi$ is a ring automorphism of $R$, preserving the grading; 
by the same reasoning we find that
\begin{equation} \label{Q2} \mbox{``trace of $t\phi$ on $R$'' }:=  \sum_{n} \mathrm{tr}(\phi|R_n) t^n = \prod_q \left( \mbox{char. poly. of $t\phi$ on $T_{q}$ }\right)^{\pm 1},\end{equation} 
where, formally, we rewrite the right hand side as a product over $m,q$ and interpret
as in \eqref{Q1}, and the signs are the same as in \eqref{Q1}; indeed \eqref{Q1}
is the specialization of \eqref{Q2} to $\phi=1$. 

\subsubsection{Back to our world} 
We will use \eqref{Q2} twice  to evaluate (*) and show that it coincides
with the formula \eqref{LXdef0prod} given in  \S \ref{conical}.

Having fixed a homomorphism $\Gamma \rightarrow \check{G}$,  $\check{X}$ acquires an action of 
$\Gamma$.
 Let us consider the 
 the derived fixed points of $\Gamma^0$ on $\check{X}$. These should be regarded here as the
spectrum of a certain derived ring  $R$ (formally: a simplicial $\kk$-algebra);
the classical ring associated to this derived ring is a nilpotent $\kk$-algebra
because we suppose that $\Gamma^0$ fixes only the origin in $\check{X}$. 

Now a fundamental design feature of derived invariants
is that they ``commute with taking tangent spaces,''
that is to say, the tangent space to derived invariants
should be the derived invariants on the tangent space -- 
and the derived invariants on a vector space are simply group cohomology.
Correspondingly, 
{\em the tangent complex for the $\Gamma^0$-derived  fixed points on $\check{X}$, at the origin, 
should be obtained by taking  geometric {\'e}tale cohomology upon the tangent complex for $\check{X}$.}
(We say ``should be'' because to give this sentence a precise meaning requires one to give more details
about how one treats the topology on $\Gamma^0$, etc. -- matters we are ignoring).

The definition of tangent complex goes over to simplicial $\kk$-algebras without change.
We apply \eqref{Q2} to $R$ as above, taking $\varphi$ to be the Frobenius automorphism. 
We see that 
the trace described in (*) equals 
\begin{equation} \label{Quilleneq}  \prod_{i, q} \det(1- tF|H^i(T_q))^{\pm 1}\end{equation} 
where the sign $\pm$ is given by the parity of $i+q+1$,   $H^i$ means
geometric {\'e}tale cohomology, and $T_q$ is the $q$th cohomology
for the tangent complex of $\check{X}$ at $0$. 

  Each product over $i$ can be evaluated by the
 Grothendieck--Lefschetz trace formula as itself a product
 over places $x$ of the curve; and thus we get 
$$ \prod_{x} \left[ \prod_{q} \det(1-t \mathrm{F}_x| T_q)^{\pm 1} \right ],$$
where the parity is now $q+1$, and $\mathrm{F}_x$ denotes the Frobenius for the
closed point $x$. 
By a second application of  \eqref{Q2}, 
each internal product
is none other than the trace of $\mathrm{F}_x$ 
on $\kk[\check{X}]$.  This argument shows, then, that (*) can be expressed by the  product over $x$ of 
the trace of   local Frobenius (times $t$) on the function ring of $\check{X}$. 
That is the definition of nonlinear $L$-functions that we took as our starting point
in 
\eqref{eq: nonlinearLdef} and \eqref{LXdef0}.

 \subsection{Normalizing factors for $L$-functions: 
comparison of \eqref{LXdef} with BZSV}  
We explain how  numerical weak duality is compatible with the conjectures of \cite{BZSV} in the case when $\check{X}$ is smooth.
For this we must first recall some terminology from there.    

 The choice of square root of the canonical bundle on $\Sigma$ permits us
    to form a ``normalized'' $L$-function  
       attached to a representation $\rho: \mathrm{Gal}_F \rightarrow \GL(T)$. 
       First of all, we define as in \cite[\S 11.2]{BZSV} a square root of the $\epsilon$-factor, by 
       $$  \epsilon(s,T)^{1/2} := \det(T)(\partial^{1/2})q^{(1/2-s)(g-1)\mathrm{dim}(T)},$$
       and then define, again following \cite[\S 11.2]{BZSV},  the normalized $L$-function via
\begin{equation} \label{Lnormdef}
L^{\mathrm{norm}}(s,T) := \epsilon(s,T)^{-1/2}L(s,T), \ \ 
\end{equation}
          This is symmetric under  under $(T,s) \leftrightarrow (T^\vee, 1-s)$.
          Indeed, the usual relation
    $L(s,T) = \epsilon(s,T)L(1-s,T^\vee)$ implies that
    $ L^{\mathrm{norm}}(T,s) = L^{\mathrm{norm}}(T^\vee, 1-s)$.
Next, for $T$ a $\check{G} \times \mathbf{G}_{\mathrm{gr}}$-representation and $\varphi: \gal_F \to \check{G}$ an $L$-parameter, we view $T = \bigoplus_k \, T_k$ as a graded $\gal_F$-representation, with grading coming from $\mathbf{G}_{\mathrm{gr}}$. Then we write 
(see \cite[\S 2.8]{BZSV})
    $$L(s,T^\fltns) := \prod_k \, L(s+k/2,T_k), L^{\mathrm{norm}}(s,T^{\fltns}) = \prod_k  L^{\norm}(s+k/2, T_k)$$
    for the $L$-function ``sheared" by the $\mathbf{G}_{\mathrm{gr}}$-grading.
The numerical conjecture
of \cite[\S 14.2]{BZSV} implies that, for a Whittaker normalized form $f$ with $L$-parameter $\varphi: \gal_F \to \check{G}$, we have
    \begin{equation} \label{FromBZSV} P_X^{\norm}(f)= \sum_{x \in \mathrm{Fix}(\varphi, \check{X})} \, L^{\mathrm{norm}}(0,T_x^\fltns),\end{equation} 
    assuming, as we will in what follows, that the fixed point locus of $\varphi$ on $\check{X}$ is discrete.
    (Note that \cite{BZSV} has a factor $q^{-b_G/2}$, but in the normalization of \cite{BZSV}
    the Whittaker period is also equal to $q^{-b_G/2}$, and so for the Whittaker normalized form this factor cancels out.) 
     
Observe from \eqref{Lnormdef}
that the contribution of the $\epsilon$-factor to $L^{\mathrm{norm}}(0, T_x^{\fltns})$
 equals
 $$ \det(T_x)(\partial^{-1/2}) \times \prod_{k}  q^{ (k/2-1/2) (g-1) \dim T_k}.$$
 But the sum $\sum k \dim(T_k)$ computes $\varepsilon$ from \eqref{etadef}, the weight of $\Ggr$ on the eigenform, 
 and $\sum \dim T_k=\dim \check{X}$. So we  may write the second term as  
$q^{(g-1) (\varepsilon-\dim \check{X})/2} = \Delta^{\frac{\varepsilon - \mathrm{dim}(\check{X})}{4}}.$
Also the first term $\det(T_x)(\partial^{-1/2})$ agrees with $\mathfrak{z}$
from \eqref{LXdef}. This shows that \eqref{FromBZSV} coincides with  the definition from \eqref{LXdef} and \eqref{CLR}; they both give
\begin{equation} L_{\check{X}}(\varphi) := \mathfrak{z} \Delta^{\frac{ (\varepsilon - \dim \check{X})}{4}}  \sum_{x \in \mathrm{Fix}(\varphi,\check{X})} \, 
\cdot 
\prod_v \, \mathrm{tr}(\mathrm{Fr}_v \times q^{-1/2} \, | \, \widehat{\mathcal{O}}_{\check{X},x})\end{equation}
where the Euler product is interpreted by analytic continuation.

 \section{Proofs of weak duality in the smooth examples} \label{smoothproofs} 
 
 In this appendix we provide the proof of Theorem \ref{thm: smooth}, although we will
 be somewhat terse; the content here is simply to pin down constants. We adopt the following convention for $L^2$-products: 
 \begin{equation} \label{ipdef} \langle f,g\rangle := \int_{[G]} \, f(x) g(x) \, dx\end{equation} 
 \textit{without} any complex conjugation.
 We will also allow ourselves with integral that diverge because of a central torus simply by formally writing the divergent integral; these computations can all be
 rewritten as absolutely convergent computations by quotienting by the center
 along the lines described in 
\S \ref{sect: cancelZetaOnes}, but the formal structure becomes less elegant when written this way. 
 
 \subsection{First row of the table: Iwasawa--Tate period} 
 \label{Tate}

Here $G_1=\Gm$ acts on $X_1=\mathbf{A}^1$, and $\Ggr$ acts on $X_1$ by scaling, and $G_2, X_2$ identically defined. 
By virtue of this symmetry it's enough to check \eqref{weakduality} in one direction.
We write $(G, X) = (G_1, X_1)$ in what follows.

Note that $\eta(g,\lambda)=g\lambda$, 
corresponding to the factor by which multiplication by $g$ and $\lambda$ scale
$dx \in \Omega^1(\mathbf{A}^1)$.
From \eqref{thetaXdef} and the preceding definitions, we get
 $$
        \theta_X(g)  =  |g \partial^{1/2}|^{1/2} \sum_{x \in F^\times} \Phi^0(x g \partial^{1/2})
 $$

Unfolding in the usual way, we find that for an idele class character  $\chi$ that is nontrivial on ideles of norm $1$ (automatically Whittaker-normalized, for its value at $1$ equals $1$), we have 
\begin{align*} P_{\mathbf{A}^1}(\chi) = \langle \theta_X, \chi \rangle &=   
 \int_{F^{\times} \backslash \mathbb{A}^{\times}} |g \partial^{1/2}|^{1/2} \sum_{x \in F^\times} \Phi^0(x g \partial^{1/2}) \chi(g) dg\\
 &= \chi(\partial^{-1/2})  \int_{F^{\times} \backslash \mathbb{A}^{\times}} |g|^{1/2} \sum_{x \in F^\times} \Phi^0(x g) \chi(g) dg
\\ &= \chi(\partial^{-1/2})L(1/2, \chi)  \stackrel{\eqref{LXdef}}{=} L_{\mathbf{A}^1}( \mbox{parameter of $\chi$}).
  \end{align*}
  The last step is the unramified evaluation in Tate's thesis. 
  This concludes the proof of weak duality in this example.

\subsection{Second row of the table: the group case for $\mathrm{GL}_n$.} \label{gp} Let $H = \mathrm{GL}_n$.
 Let $G_1 = H \times H$, and let $G_2 = \check{H} \times \check{H}$, so that $G_1$ and $G_2$ are Langlands dual. 
  We consider the $G_1$-space $X_1 = H$ with right action given by 
$$x \cdot (h_1,h_2) = h_1^{-1}x h_2$$
and the $G_2$-space $X_2 = \check{H}$ with right action given by
\begin{equation} \label{X2action} x \cdot (h_1, h_2) = {^\iota h}_1^{-1} x h_2 = h_1^T x h_2\end{equation}
where $\iota$ is the involution given by transpose-inverse. We let $\Ggr$ act trivially on both $X_1, X_2$. 

 Again in both cases, taking the Haar measure on $X_1, X_2$, we have
$\eta(h_1,h_2,\lambda) = 1$.  From \eqref{thetaXdef} we get
$$\theta_{X_1}(h_1,h_2) = \Delta^{-\frac{\mathrm{dim}(H)}{4}}\sum_{x \in H(F)} \Phi^0(h_1^{-1}xh_2)$$
Note that $G_1(F)$ acts transitively on $X_1(F)$, and the distinguished base point $\gamma_0 = \mathrm{id}_H \in X_1 $ has stabilizer isomorphic to $H$ 
$$\mathrm{Stab}_{G_1}(\gamma_0) = \big\{(h,h) \in G_1: h \in H\} \cong {^\Delta H}$$
and similarly, $G_2(F)$ acts transitively on $X_2(F)$ with the distinguished base point $\mathrm{id}_H$, whose stabilizer is the $\iota$-twisted diagonal
$$\mathrm{Stab}_{G_2}(\gamma_0) = \big\{(^\iota h, h): h \in H \} \cong {^{\iota\Delta} H}$$
The automorphic $X_1$-period of a cusp form $f_1 \times f_2 \in \pi_1 \boxtimes \pi_2$ on $G_1$ can be computed as 
\begin{align*}
P_{X_1}(f_1 \times f_2) &= \langle \theta_{X_1}, f_1 \times f_2\rangle = \Delta^{-\frac{\mathrm{dim}(H)}{4}}\int_{[G_1]} f_1(h_1)f_2(h_2) \sum_{x \in H(F)} \Phi^0(h_1^{-1}xh_2) \, d(h_1,h_2)\\
&= \Delta^{-\frac{n^2}{4}}  \int_{{^\Delta H}(\mathbb{A}) \backslash G_1(\mathbb{A})} \, \Phi^0(h_1^{-1}h_2) \, \int_{[H]} \, f_1(hh_1)f_2(hh_2) \,  dh \, d(h_1,h_2)\\
&=  \Delta^{-\frac{n^2}{4}} \int_{H(\mathbb{A})} \, \Phi^0(g) \, \int_{[H]} \, f_1(h) f_2(hg) \, dh \, dg  \, \text{ substituting } g \to h_1^{-1}h_2\\
&=  \Delta^{-\frac{n^2}{4}} \, \int_{H(\mathbb{A})} \, \Phi^0(g) \, \big\langle \, f_1, \pi_2(g) f_2 \, \big\rangle_{[H]} \, dg\\
&=   \Delta^{-\frac{n^2}{4}}  \big\langle \, f_1, f_2 \, \big\rangle_{[H]} 
\end{align*} 
where $\langle \, \cdot \, , \, \cdot \, \rangle_{[H]}$ is the inner product on $[H]$ without complex conjugates, as noted in \eqref{ipdef}. 
Up to scaling the automorphic forms, the inner product $\langle \, f_1, f_2\, \rangle_{[H]}$ is nonvanishing precisely when $f_1 = \overline{f_2}$, in other words, when $\pi_1$ is the Hermitian dual of $\pi_2$.

 So suppose $\pi_1 = \pi^{\iota}_2 = \pi$, and $f_1 = f^{\iota}_2 = f \in \pi$ is a Whittaker normalized cusp form on $H$. Then the conjecture of Lapid--Mao which is a theorem for $\mathrm{GL}_n$ (see Theorem 4.1 of \textit{loc. cit}) implies that 
$$\frac{\|f\|^2_{L^2}}{|W_f^0|^2} = \Delta^{n^2/2} q^{\beta_{\mathrm{Whitt}}} \, L(1, \mathrm{Ad}, \pi)$$
as in \cite[\S 14.5]{BZSV}.
(To be precise, $L(s, \mathrm{Ad}, \pi)$ has a pole at $s = 1$ and the $L^2$-norm is infinite; but this equality is understood by formally cancelling the $\zeta(1)$'s on both sides in the sense of Section \ref{sect: cancelZetaOnes}.)

We reprise that argument following 
the notation of Lapid and Mao \cite{Lapid-Mao}. 
In the case at hand their formula states:
$$  \frac{|W_f^0|^2}{ \int_{[G]} |f|^2 }  =  \mathrm{vol}([G])^{-1}
\int_{n\in N(\mathbb{A})} \langle n a_0 f, a_0 f \rangle \psi(n) dn$$
 where  the integral at the end is to be expressed as a product
and regularized in a standard way.
(Specifically, see equation (3.3) and Conjecture 3.3 of \cite{Lapid-Mao}, as well as Proposition 2.3
for some of the notation;  we formally take $S$ to be all places,  $\varphi=a_0\cdot f$, and $\varphi^{\vee}$ its conjugate.)
In our case $\mathrm{vol}([G]) =  \Delta^{n^2/2} \prod_{1}^n \zeta(i)$. 
The measure on $N$ assigns mass $1$ to the quotient, so that $dn = q^{-(g-1) u} \prod_{v} dn_v$
where each $dn_v$ assigns mass $1$ to the integral points of $N$ and $u = \mathrm{dim}(N)$.
Note that in fact $\psi(n) = \psi_0(a_0^{-1} n a_0)$
where $\psi_0$ is everywhere ``unramified,'' and $a_0$
is as in \eqref{a0def}. 
Making the substitution $n \leftarrow a_0 n a_0^{-1}$, 
 so that $d(a_0n a_0^{-1}) =|e^{2\rho(a_0)}| dn$, we compute the integral above to equal
 $$q^{-(g-1) u}  |e^{2\rho(a_0)}| \int \langle \pi(n) v,   v \rangle \psi_0(n) = q^{-(g-1) u + (2g-2) \langle 2 \rho, \check{\rho} \rangle} \frac{\prod_{1}^n \zeta(i)}{L(1, \mathrm{ad})},$$
 where we used the unramified evaluation of the local integrals as in Proposition 2.14 of \textit{loc. cit}. We conclude with \eqref{Whitnorm} to rewrite the power of $q$ in front as $\beta_{\mathrm{Whitt}}$.

 Since $f$ is Whittaker normalized, $|W_f^0|^2$ is exactly $q^{-\beta_{\mathrm{Whitt}}}$, so for such an $f$ we have 
\begin{equation} \label{X1group1} \langle \theta_{X_1},  f \times \bar{f} \rangle = \Delta^{-\frac{n^2}{4}} \, \|f\|_{L^2}^2 = \Delta^{+ \frac{n^2}{4} }\, L(1, \mathrm{Ad}, \pi)\end{equation}

Let's compare this to the spectral $X_2$-period. Note that the $G_2$-valued $L$-parameter $\varphi_1 \times \varphi_2$ of $\pi_1 \boxtimes \pi_2 $
has a fixed point on $X_2$ if and only if $^\iota \varphi_1$ is conjugate to $\varphi := \varphi_2$. In this case,  for cuspidal $\pi_i$, the identity coset in $X_2$ is the unique fix point, whose tangent space is $\check{\mathfrak{h}}$ equipped with the Galois representation $\mathrm{Ad} \circ \varphi$. Since $\Ggr$ acts trivially, we evaluate the adjoint $L$-function at $s = 0$: 
\begin{equation} \label{X1group2} L_{X_2}(f \times f) = \Delta^{-\frac{n^2}{4}} \, L(0, \mathrm{Ad}, \pi) = \Delta^{\frac{n^2}{4}}L(1, \mathrm{Ad}, \pi)\end{equation}
by the functional equation of $L(s, \mathrm{Ad},\pi)$. The agreement between \eqref{X1group1} and \eqref{X1group2} establishes \eqref{weakduality}, i.e. one-half of Theorem \ref{thm: smooth} in this case. 

The automorphic $X_2$-period of a cusp form $f = f_1 \times f_2 \in \pi_1 \boxtimes \pi_2$ on $G_2$ can be unfolded in a similar way
\begin{align*}
P_{X_2}(f) &= \langle \, \theta_{X_2}, f\, \rangle = \Delta^{-\frac{n^2}{4}} \, \int_{{^{\iota\Delta}H}(\mathbb{A}) \backslash G_2(\mathbb{A}) } \Phi^0(^\iota h_1^{-1} h_2) \int_{[H]} \, f_1(^\iota h h_1)f_2(hh_2) \, dh \, d(h_1,h_2)\\
&= \Delta^{-\frac{n^2}{4}} \int_{H(\mathbb{A})} \, \Phi^0(g) \int_{[H]} \, f_1(^\iota h)f_2(hg) \, dh \, dg \, \text{ substituting } g \to {^\iota h}_1^{-1} h_2\\
&=  \Delta^{-\frac{n^2}{4}} \, \int_{H(\mathbb{A})} \, \Phi^0(g) \langle \, f_1^{\iota}, \pi_2(g)f_2 \rangle_{[H]} \, dg\\
&=  \Delta^{-\frac{n^2}{4}} \, \langle f_1^{\iota}, f_2 \rangle
\end{align*}
Up to scaling the automorphic forms, the inner product $\langle f_1^{\iota}, f_2 \rangle$ is nonvanishing precisely when $f_1 = f_2$. So suppose $\pi_1 = \pi_2 = \pi$, and $f_1 = f_2 = f \in \pi$ is a Whittaker normalized cusp form on $H$. Then
$$\langle \, \theta_{X_2}, f \times f\rangle = \Delta^{-\frac{n^2}{4}} \|f\|^2_{L^2} = \Delta^{\frac{n^2}{4}}L(1, \mathrm{Ad}, \pi) = L_{X_1}(\varphi \times \varphi)$$  by the same considerations as the previous computation.  This concludes the proof of Theorem \ref{thm: smooth} in this case. 

 \subsection{Third row of the table: Godement--Jacquet and Rankin--Selberg}
 
We now  consider  ``the Godement--Jacquet period''
where $X_1$ consists of $n \times n$ matrices, acted on $G_1=\GL_n \times \GL_n$ by means of
\begin{equation} \label{matrixactiondef} M(g_1, g_2) = g_1^{-1} M g_2\end{equation} 
and the $\Ggr$ action by scaling.  
On the other side, we will take the ``Rankin--Selberg'' period of Jacquet--Piatetski-Shapiro and Shalika,
wherein $X_2 = \GL_n \times \mathbf{A}^n$
and the action of $G_2 = \GL_n \times \GL_n$ will be via 
  \begin{equation}  \label{GJRSX2def}  (x, v) (g_1, g_2)  = (g_1^T x g_2,  v g_2)\end{equation} 
  and $\Ggr$ acting by scaling on the $\mathbf{A}^n$ factor. Observe here the transpose on $g_1$, cf. \eqref{X2action}.

  \subsubsection{Godement--Jacquet $G_1=\GL_n^2,  X_1= \mathrm{Mat}_{n \times n}$}  \label{GJcase}

We write $(G, X)$ for $(G_1, X_1)$ in \S \ref{GJcase}. 
In this case, 
an invariant top form is scaled by
$$\eta(g_1,g_2,\lambda) = |g_1^{-1}g_2|^n\, |\lambda|^{n^2}$$
where to shorten notation we write $|g|$ in place of $|\det g|_{\mathbb{A}}$.

From \eqref{thetaXdef} we get
 $$
        \theta_{X_1}(g_1, g_2)  =  (\cdots) |\partial|^{n^2/4} |g_1^{-1} g_2|^{n/2} \sum_{x \in \mathring{X}_1(F)} \Phi^0((g_1^{-1} x g_2) \partial^{1/2}).
 $$
 Here $(\cdots)$ is the dimensional factor $\Delta^{-n^2/4}$, and $\mathring{X}_1 \subset X_1$ is the open orbit of matrices of full rank. We continue to abridge it by $(\cdots)$ since later on we are going to compare it to the group case $G=\GL_n^2, X=\GL_n$ that will have precisely the same prefactor.
 
 Let $f_1,f_2$ be two cusp forms. Notice that $\mathring{X}_1(F)$ is the orbit of the identity, whose stabilizer
 is the diagonal copy of $\GL_n(F)$\footnote{Note that in the situation when $f_1, f_2$ are cuspidal, we may replace $\mathring{X}_1(F)$ by $X_1(F)$ in the summation since the only contribution to the integral $\langle \theta_{X_1}, f_1 \times f_2 \rangle$, 
 is from the open orbit $\mathring{X}_1(F) \subset X_1(F)$  (see Lemma 12.13 of \cite{Godement-Jacquet}).}.  Accordingly, we unfold: 
\begin{equation} \label{GJeq1}
 \langle \theta_{X_1}, f_1 \times f_2 \rangle = (\cdots) |\partial|^{n^2/4}  \int_{\GL_n(F)^{\Delta} \backslash \GL_n^2(\mathbb{A})} f_1(g_1) f_2(g_2) \Phi^0 (g_1^{-1} g_2 \partial^{1/2})|g_1^{-1}g_2|^{n/2}. 
 \end{equation}
 which equals, under the change of variables $g_2 \leftarrow g_1g$ and integrating out the $g_1$-variable, 
 $$ =  (\cdots) |\partial|^{n^2/4}   \int_{\GL_n(\mathbb{A})} \langle f_1, g f_2 \rangle_{[\GL_n]} \Phi^0(g \partial^{1/2})|g|^{n/2}.$$
 where $\langle f_1, g f_2 \rangle$ denotes the integral over $[\GL_n]$
 of $f_1 \cdot (g f_2)$.  This integral is divergent, and contains a factor
 of $\zeta(1)$ which we will regularize as previously discussed. 
 
 This integral vanishes unless (up to scaling) $f_1 =\bar{f}, f_2 =f$. In that case, denote
 it by $\mu(g)$, i.e. the  matrix coefficient associated to $f$;
  we get $$  = (\cdots) |\partial|^{n^2/4} \int_{\GL_n(\mathbb{A})} \mu(g) |g|^{n/2} \Phi^0(g \partial^{1/2}),$$
Making the substitution $g \leftarrow g \partial^{1/2}$ eliminates the factor $|\partial|^{n^2/4}$. Moreover,
after thus substituting, 
the integral recovers the standard $L$-function of $f_2$ at $1/2$ (in the sense of analytic continuation, use Lemma 6.10 of \cite{Godement-Jacquet} with $s=1/2$), multiplied by the value of $\mu$ at $\partial^{-1/2}$,
which is precisely $\langle f_1, f_2 \rangle$ multiplied by the central character
$\chi_2(\partial^{-1/2})$; so, all in all, the pairing of
\eqref{GJeq1} equals 
$$ (\cdots)  \langle f_1, f_2 \rangle \chi(\partial^{-1/2}) L(\frac{1}{2}, f_2).$$
Comparing to the normalized group period computed in \eqref{gp} --  that is to say the period
for $X_{\textrm{group}}=\GL_n$ as a $\GL_n \times \GL_n$-space -- which is just
$(\cdots) \langle \varphi, \varphi \rangle,$ we see that
$$ \langle \theta_{X}, f_1 \times f_2  \rangle = \langle \theta_{X^{\textrm{group}}}, f_1 \times f_2 \rangle  \cdot \left( \chi(\partial^{-1/2}) L(\frac{1}{2}, f_2)  \right).$$
By the computation  \eqref{X1group1} in the group case, the first term equals
the normalized $L$-function for the space $Y_{a} = \GL_n$
as $\GL_n \times \GL_n$-space, and the second term
contributes the normalized $L$-function for the space  $Y_b=\mathbf{A}^n$  considered as a $\GL_n \times \GL_n$-space through the standard representation of the second factor;
since $X_2 = Y_a \times Y_b$, see \eqref{GJRSX2def}, 
 the right hand side gives indeed $L_{X_2}(\varphi)$.

 \subsubsection{Rankin--Selberg $G_2 = \GL_n^2, X_2 = \GL_n \times \mathbf{A}^n$. } \label{JPSS}
 As before we write $(G, X)$ for $(G_2, X_2)$ here, with action
 as in \eqref{GJRSX2def}. 

Consider the base point
$$w = \begin{bmatrix}  & & & 1\\ & & -1 & \\  & \reflectbox{$\ddots$} & &\\ (-1)^{n-1} & & & \end{bmatrix}$$
Observe that $w^{-1} = w$ if $n$ is odd, and $w^{-1} = -w$ if $n$ is even. 
In particular, we can compute the stabilizer of $(w \times 0) \in \GL_n \times \mathbf{A}^n$
$$\mathrm{GL}_n^{\tau\Delta} :=
 \mathrm{Stab}_{\mathrm{GL}_n^2}(w,0) = (g, g^\tau ) := w\,  {(g^T)^{-1}} w^{-1}) $$
by the calculation
$$g^T \cdot w \cdot g^\tau = g^T \cdot w \cdot w \, {(g^T)^{-1}} w^{-1} = (-1)^{n-1}w^{-1} = w$$
The eigenform has eigenvalue
  $$\eta(g_1,g_2,\lambda) = |g_2|\, |\lambda|^n$$
  Let $\Phi^0 = \Phi_1 \otimes \Phi_2$ be the standard Schwartz function on $X_2 = \GL_n \times \mathbf{A}^n$, then $\mathring{X}_2(F) = \GL_n(F) \times (F^n-0)$ and from \eqref{thetaXdef} we get
  $$  \theta_{X_2}(g_1, g_2) =  (\cdots) |\partial|^{n/4} |g_2|^{1/2} \sum_{x \in \GL_n(F), y \in F^n-0} \Phi_1(g_1^{T} x g_2) \Phi_2(yg_2 \partial^{1/2})$$
  The dimension factor $(\cdots)$ equals  $\Delta^{\frac{n-n^2}{4}} = 
\Delta^{-\dim(U)/2} = q^{-(g-1) \dim U}$
  where $U$ is the unipotent for $\GL_n$. 
 
 When we do the integral $\langle \theta_{X_2}, f_1 \times f_2 \rangle$ we proceed just as in \S \ref{GJcase} to get
    $$\Delta^{-\dim(U)/2}  |\partial|^{n/4} \int_{\mathrm{GL}_n^{\tau\Delta} (F) \backslash \GL_n(\mathbb{A})^2} 
    \Phi_1(g_1^T w g_2) f_1(g_1) f_2(g_2)  \sum_{y \in F^n-0} \Phi_2(yg_2 \partial^{1/2}) |g_2|^{1/2}.
    $$
Nonvanishing of $\Phi_1$ restricts to the locus where $g_1 = g_2^\tau G_2(\mathcal{O})$ so one gets,  
relabelling $g_2 \leftarrow g$, 
         $$\Delta^{-\dim(U)/2}   |\partial|^{n/4} \int_{\GL_n(F) \backslash \GL_n(\mathbb{A})}
 f_1(g^\tau ) f_2(g)  \sum_{y \in F^n-0} \Phi_2(yg \partial^{1/2}) |g|^{1/2}.
    $$ 
  Folding further via the open $\GL_n(F)$ orbit
  of $$y = (0,0,\dots, 1),$$ with stabilizer the mirabolic subgroup $P_n$
  with bottom row $(0,\dots, 1)$ we get:
         $$\Delta^{-\dim(U)/2} |\partial|^{n/4} \int_{P_n(F) \backslash \GL_n(\mathbb{A})}
 f_1^{\tau}(g) f_2(g)  \Phi_2(yg \partial^{1/2}) |g|^{1/2} dg.
    $$ 
  where we wrote $f_1^{\tau}(g) = f_1(g^\tau)$. Now unfold via $f_i(g) = \sum_{\gamma \in N\backslash P_n(F)} W_i(\gamma g)$,
with the Whittaker functions $W_i$ of $f_i$ being normalized using probability measures
on the unipotent subgroups. Note that  $\tau$ stabilizes $N$ and also the Whittaker character. 
Also we should use in fact inverse characters for the Whittaker functions $W_1, W_2$; this will make no difference
to our computation, so we will not explicitly keep track of it in the notation. 

   Then the above becomes
    $$ \Delta^{-\dim(U)/2} |\partial|^{n/4} \int_{N_F \backslash \GL_n(\mathbb{A})} W_1^{\tau}(g) W_2(g) \Phi(y g \partial^{1/2}) |g|^{1/2} dg.$$
    Since the entire integrand is $N(\mathbb{A})$-invariant, we may replace the domain of integration by $N(\mathbb{A}) \backslash \GL_n(\mathbb{A})$ at the cost of multiplying by the volume of $[N]$ with the integrally normalized measure, which exactly cancels the dimension factor $\Delta^{-\dim(U)/2}$ in front. In other words, the above can be written as an Euler-factorizable expression
    $$ |\partial|^{n/4} \int_{N(\mathbb{A}) \backslash \GL_n(\mathbb{A})} W_1^{\tau}(g) W_2(g) \Phi(y g \partial^{1/2}) |g|^{1/2} dg$$
    We proceed with the calculation of the local factors
    $$ |\partial_v|^{n/4} \int_{N_v \backslash \GL_n(F_v)} W_{1,v}^{\tau}(g) W_{2,v}(g) \Phi(y g \partial_v^{1/2}) |g|_v^{1/2} dg_v$$
    although we will often drop the subscripts $v$ to simplify notation.
    
    First of all, use Iwasawa decomposition to write the integral over $A_v$ (remembering that the Iwasawa decomposition of the measure $dg$ is given by $dg = |e^{-2\rho}|dn\, da\, dk$), and we use  \eqref{shifted CS} to write
    $W_i (a_v)= W_i^0 \cdot W_{i}^{\mathrm{un}}(a_{0,v}^{-1} a_v)$. Then we have
    $$|\partial|^{n/4} W_1^0 W_2^0 \prod_v  \int_{A_v} W_1^{\mathrm{un},\tau}(a_0^{-1}a) W_2^{\mathrm{un}}(a_0^{-1}a)|e^{-2 \rho}(a)| \Phi(y  \partial^{1/2}    a) |a|^{1/2} da$$
    Next, we translate $a$ by $a_0$ in the above integral. We calculate from the definition \eqref{a0ex} of $a_0$ that
    $$|e^{-2\rho}(a_0)| = |\partial^{1/2}|^{\langle 2\rho, 2\check{\rho}\rangle} = \Delta^{-2\langle \rho, \check{\rho}\rangle}$$
    and notice that $a_0$ acts on $y$ by its last entry, which is $\partial^{(n-1)/2}$. Also, since $\mathrm{det}(a_0)=1$ we have also $|a_0| = 1$. Thus, this translation turns our integral into
    $$|\partial|^{n/4} W_1^0 W_2^0 \Delta^{-2\langle \rho, \check{\rho}\rangle}  \prod_v \int_{A_v} W_1^{\mathrm{un},\tau}(a) W_2^{\mathrm{un}}(a)|e^{-2 \rho}(a)| \Phi(y  \partial^{n/2}    a) |a|^{1/2} da$$
    Now we want to translate $a$ by the central element $\partial^{-n/2}$. Writing $\chi_i$ for the central characters of $W_i$, we see that 
    $$W_1^{\mathrm{un}, \tau}(\partial^{-n/2}a)W_2^{\mathrm{ur}}(\partial^{-n/2} a) = \chi_1^{-1}\chi_2(\partial^{-n/2})W_1^{\mathrm{un},\tau}(a) W_2^{\mathrm{un}}(a)$$
     and the central character factors will come out in front. The term $|a|^{1/2}$ becomes $|\partial^{-n/2} a|^{1/2} = |\partial|^{-n^2/4}|a|^{1/2}$; combining this with the $|\partial|^{n/4}$ in front we get a factor of  $|\partial|^{(n-n^2)/4} = \Delta^{\mathrm{dim}(U)/2}$. Finally, also note that this central translation does not affect the term $|e^{-2\rho}(a)|$, so in the end we have
    $$\chi_1^{-1}\chi_2(\partial^{-n/2})W_1^0 W_2^0 \Delta^{\mathrm{dim}(U)/2-2\langle \rho, \check{\rho}\rangle}  \prod_v \int_{A_v} W_1^{\mathrm{un},\tau}(a) W_2^{\mathrm{un}}(a)|e^{-2 \rho}(a)| \Phi(ya) |a|^{1/2} da$$
   Since we are using Whittaker normalization  -- see \eqref{Whitnormnew} -- we have that the prefactor
   $$ \Delta^{\dim(U)/2- 2\langle  \rho,  \check{\rho} \rangle)}
   W_1^0 W_2^0 = 1,$$
   so we arrive at 
   \begin{align*}&\chi_1^{-1}\chi_2(\partial^{-n/2})\prod_v \int_{A_v} W_1^{\mathrm{un},\tau}(a) W_2^{\mathrm{un}}(a)|e^{-2 \rho}(a)| \Phi(ya) |a|^{1/2} da\\
  &= \chi_1^{-1}\chi_2(\partial^{-n/2}) L_v(\frac{1}{2}, f_1^{\tau} \times f_2).
  \end{align*}
For the unramified local integral calculation, see, for example, Proposition 2.3 of \cite{Jacquet-Shalika}. 

After taking product over $v$, the product of these local $L_v$  give $L_{X_1}(f_1 \times f_2)$, taking into account the action of $\GL_n \times \GL_n$ on matrices given by 
\eqref{matrixactiondef}: note that $\tau$ acts on diagonal matrices by $(a_1,\ldots, a_n) \mapsto (a_n^{-1}, \ldots, a_1^{-1})$, so it takes a Satake parameter indexing a representation of the dual group to the dual representation.

 \subsection{Final row of the table: Hecke and Eisenstein periods on $\GL_2$}
 
 Finally, we consider $G_1=\GL_2$ acting on $X_1=T \backslash \GL_2$ on one side,
 with $T$ the torus of diagonal matrices $\mathrm{diag}(x, 1)$, and 
 $X_1$ has  trivial $\Ggr$ action; and on the other side $G=\GL_2$ acting on $X_2=\mathbf{A}^2$
 with scaling $\Ggr$ action.
 
\subsubsection{Hecke period} \label{Hecke}
We take $(G, X) = (G_1, X_1)$. 
 Here 
$$P_X^{\norm}(f) = \Delta^{-1/4} \int_{[T]} f(t) dt = \Delta^{-1/4} \int_{T(\mathbb{A})} W_f(t) dt
=   \prod_{v} P_{X, v},$$
where we unfolded, using the fact (Whittaker normalization, \eqref{Whitnorm}) that $W_f^0 = \Delta^{1/4}$, to a product
of local factors given by: 
\begin{align*}
P_{X,v} &= \int_{T(F_v)} W_v^{\mathrm{un}}(t_v a_{0,v}^{-1})  dt\\
&=  \sum_{\alpha} \,  q_v^{-(\alpha+2m_v)/2} s_{\alpha+m_v,-m_v}(\mathrm{Sat}_{v})\\
&= L_v(1/2, f) \times \chi_v(\varpi_v^{-m_v})
\end{align*}
where $m_v$ is the order of the choice of half different at $v$, as in \eqref{partialdef}. Here, $\alpha$ ranges through integers $\geq 0$ and $s_{\alpha,\beta}(\mathrm{Sat}_v)$
refers to the trace of Satake parameter at $v$ in the representation of the dual group $\check{G} = \GL_2$
with highest weight $(\alpha, \beta)$; at the last step, we reindexed via $\alpha \leftarrow \alpha+2m_v$.  
Taking the product we get $\chi_f(\partial^{-1/2}) L(\frac{1}{2}, f) $ which 
is just $L_{\mathbf{A}^2}(\varphi)$ as defined in \eqref{LXdef}.

\subsubsection{$G_2=\SL_2, X_2=\mathbf{A}^2$} \label{Eiscase}
Strictly speaking, according to our definition of ``weak duality,''
there is nothing to compute: the pairing of $\theta_{X_2}$
with any cusp form vanishes. However, let us compute 
it for an Eisenstein series to show that the underlying phenomenon remains valid there too
after using an obvious regularization. 

This amounts to a computation 
of constant terms of Eisenstein series. 
 Write $\psi_{s}$ for the spherical function on $G(\mathbb{A})$ whose value at $NAK$
is given by $|e^{\rho}(a)|^{1+2s}$.
Let $E_s$ be the Whittaker-normalied Eisenstein series induced from $\psi_s$; this means we form
 first $\sum_{B_F \backslash G_F} \psi_s(\gamma g)$
(absolutely convergent for $\Re(s)$ large enough), and then scale it so that it is Whittaker normalized in the sense
of \eqref{Whitnorm}). 

We have
$$ \theta_X(g) = \Delta^{-1/4} |\partial^{1/2}| \sum_{v \in F^2-0}  \Phi( v  \partial^{1/2}   g).$$
 The prefactor equals
 $\Delta^{-3/4}$. 
 We proceed to compute
$ \langle \theta_X,  f \rangle$
for an automorphic form $f$ orthogonal to the constants. 
 
We get:
\begin{eqnarray*}
 \langle \theta_X,  f\rangle = \Delta^{-3/4} \int_{[\SL_2]} \sum_{\gamma \in N_F \backslash \SL_2(F)} \Phi( (0,1) \partial^{1/2} \gamma g) f(g) \\ =  \Delta^{-1/4}
\int_{N_{\mathbb{A}} \backslash \SL_2(\mathbb{A})} \Phi((0,1)  \partial^{1/2} g) f^N(g)
\end{eqnarray*}
using the fact that the measure of  $N_F \backslash N_{\mathbb{A}}$ equals $\Delta^{1/2}$; and $f^N$ is the 
  constant term along $N$, normalized using the probability Haar measure. 
  
   In particular, the period vanishes if $f$ is cuspidal; this is just as requied by weak duality,
   because the Galois parameter of $f$ fixes no points on $X_1$ in that case. 
Now, proceeding formally, let us see that a suitably regularized version of the desired equality
continues to hold when $f$ is Eisenstein.  {\em We will proceed formally, ignoring all issues of convergence,}
although it is not difficult to set up a regularization scheme to make rigorous sense of what follows. 

Let $E_s$ be the Whittaker-normalized Eisenstein series induced from $\psi_s$.
Below we will compute that  
 \begin{equation} \label{ECnormalized} E_s^N =  \left( \xi^{\norm}(-2s) \psi_s + \xi^{\norm}(2s) \psi_{-s}\right)\end{equation}
where $\xi^{\norm}$ is the normalized
$\zeta$-function for the curve $\Sigma$, explicitly given by
$\xi^{\norm}(s) = \Delta^{ \frac{s-1/2}{2}}  \zeta(s)$, which is then symmetric under $s \leftrightarrow 1-s$. 

Assuming Equation \eqref{ECnormalized}  for now let us proceed with the period computation. 
Consider for example the $\psi_s$ term from \eqref{ECnormalized}; it contributes to $\langle \theta_X,  f \rangle$ the 
quantity $\Delta^{-1/4} I(s) \xi^{\norm}(-2s)$, where  $I(s)$
equals $\int \Phi((0,1) \partial^{1/2} g) \psi_s(g)$, which, upon
expanding in Iwasawa coordinates, becomes (multiplicative measure on $y$ in the first two integrals):
\begin{multline}  I(s) = \int_{\partial^{1/2} y^{-1} \in \mathcal{O}} |y|^{1+2s} |y|^{-2}  \stackrel{y \leftarrow 1/y}{=}
\int_{y \in \mathcal{O} \partial^{-1/2}} |y|^{1-2s}   = |\partial|^{(s-1/2)} \zeta(1-2s)  \\ = \Delta^{1/2-s} \zeta(1-2s) =  \Delta^{1/4} \xi^{\norm}(1-2s)
 =  \Delta^{1/4} \xi^{\norm}(2s). \end{multline}
Therefore, the contribution of $\psi_s$ to $\langle \theta_X, f \rangle$ is $\xi^{\norm}(2s) \xi^{\norm}(-2s)$;
the conrtribution of the other term is exactly the same, and so 
  $$P_X(E_s) = 2 \xi^{\norm}(2s) \xi^{\norm}(-2s).$$ 
  
    Note that $\xi^{\norm}(2s) \xi^{\norm}(-2s) =  \Delta^{-1/2} \zeta(2s) \zeta(-2s)$.
    Therefore, this agrees with the normalized spectral period. Indeed, that spectral period, 
as defined in \eqref{LXdef}
  has a sum over two fixed points, and each of them contribute $\Delta^{-1/2} \zeta(2s) \zeta(-2s)$.

  \proof (of \eqref{ECnormalized}).  
Write $W^{\norm} = \Delta^{-1/4} W^0_{E_s}$.   We must compute $\frac{E_s}{W^{\norm}}$ and its constant term.
From   
\eqref{Whitdef} we get  after unfolding 
 $$W^{\norm} = \Delta^{-1/4} \int_{N(\adele)} \psi(u) \psi_{s}(w u a_0) du$$
 with $a_0$ as in \eqref{a0ex}.  The
 integral over $N(\adele)$ is with respect to the probability Haar measure,
 which equals  $\Delta^{-1/2}$ times the  integrally normalized measure. 
  Now $\psi_{s}(w u a_0)
 = \psi_{s}(w a_0 u^{a_0}) = |e^{\rho}(a_0)|^{-1-2s} 
\psi_{s}(w u^{a_0})$ where $( \,\cdot \,)^{a_0} := a_0^{-1} ( \, \cdot \, ) a_0$. Also $|e^{\rho}(a_0)| = |\partial^{-1/2}| =  \Delta^{1/2}$, see \eqref{a0ex}. 
Substituting $u \leftarrow \mathrm{Ad}(a_0) u$
incurs an extra measure normalization factor of $|e^{2\rho}(a_0)| =\Delta$. 
In total we get
  $$W^{\norm} = \Delta^{-3/4} \cdot \Delta \cdot \Delta^{(-1-2s)/2} [\dots]
 = \Delta^{-1/4-s} [\dots]$$

 where $[\dots]$ refers to the integral $\int_{N(\mathbb{A})}  \psi(a_0 u a_0^{-1}) \psi_s(w u) du$, the integral
 being taken with respect to the integrally normalized measure.
 Noting that $\psi(a_0 u a_0^{-1})$ is an unramified character,  
 the integral evaluates to $\zeta(2s+1)^{-1}$;
 thus $W^{\norm}(E_{s})$ is given by $\Delta^{-\frac{1}{2} (1/2+2s)} \xi(2s+1)^{-1} = \xi^{\norm}(2s+1)^{-1}$. 
 Now, using the evaluation of the (probability-measure-normalized)
 constant term as $\psi_s   
  + \frac{\xi^{\norm}(2s)}{\xi^{\norm}(2s+1)} \psi_{-s}$, we get
 \begin{align*}
\left[\frac{ E_{s}}{W^{\norm}} \right]_U &= 
 \left(\psi_s   
  + \frac{\xi^{\norm}(2s)}{\xi^{\norm}(2s+1)} \psi_{-s} 
\right)   \xi^{\norm}(2s+1) \\ 
 &= \xi^{\norm}(2s+1) \psi_s + 
 \xi^{\norm}(2s) \psi_{-s} 
 \end{align*}
 which coincides with the desired expression \eqref{ECnormalized} after using the functional equation.

\end{document}